\newtheorem{Algorithm}{Algorithm}[section]
\newtheorem{Theorem}{Theorem}[section]
\newtheorem{Lemma}{Lemma}[section]
\newtheorem{Proposition}{Proposition}[section]
\newtheorem{Remark}{Remark}[section]
\newtheorem{example}{Example}[section]
\newproof{proof}{Proof}
\newproof{pot}{Proof of Theorem \ref{thm2}}
\newcommand{\bb}{\begin{bmatrix}}
\newcommand{\eb}{\end{bmatrix}}
\newcommand{\bl}[1]{\begin{list}{#1}{\usecounter{bean}}} \newcommand{\el}{\end{list}}
\newcommand{\bel}[1]{\begin{equation} \label{#1}} \newcommand{\eel}{\end{equation}}
\def\r2n2n{\mathbb{R}^{2n\times 2n}}
\def\c2n2n{\mathbb{C}^{2n\times 2n}}
\begin{document}

\date{}
\begin{frontmatter}
\title{
On a class of nonlinear matrix equations $X\pm A^{\small H}f(X)^{-1}A=Q$
\tnoteref{t1}}
\author{Chun-Yueh Chiang\corref{cor1}\fnref{fn1}}
\ead{chiang@nfu.edu.tw}
\address{Center for General Education, National Formosa
University, Huwei 632, Taiwan.}

\cortext[cor1]{Corresponding author}
\fntext[fn1]{The author was supported
 by the Ministry of Science and Technology of Taiwan under grant
MOST 105-2115-M-150-001.}

\date{ }

\begin{abstract}
Nonlinear matrix equations are encountered in many
applications of control and engineering problems. In this work, we establish a complete study
for a class of nonlinear matrix equations. With the aid of Sherman Morrison Woodbury formula,
  we have shown that any equation in this class has the maximal positive definite solution under a certain condition. Furthermore, A thorough study of properties about this class of matrix equations is provided. An acceleration of iterative method with R-superlinear convergence with order $r>1$ is then designed to solve the maximal positive definite solution efficiently.
\end{abstract}

\begin{keyword}
Nonlinear matrix equation,\,Sherman Morrison Woodbury formula,\,Maximal positive definite solution,\,Flow,\,Positive operator,\,Doubling algorithm,\, R-superlinear with order $r$
\MSC 65F10 \sep  65H05 \sep  15A24\sep 15A86
\end{keyword}

\end{frontmatter}
\section{Introduction}
In the paper we consider a class of nonlinear matrix equations (NMEs) with the plus sign
\begin{subequations}\label{eq:NME}
\begin{align}\label{eq:NMEP}
X+A^{H}f(X)^{-1}A=Q
\end{align}
and the minus sign
\begin{align}\label{eq:NMEM}
X-A^{H}f(X)^{-1}A=Q,
\end{align}
\end{subequations}
where $A \in \mathbb{F}^{n \times n}$, 
  $Q$ is a Hermitian ( or symmetric) matrix with size $n\times n$, and the $n$-square matrix $X$ is an unknown Hermitian matrix and will be determined. The base field $\mathbb{F}$ can be the real field $\mathbb{R}$ and complex field $\mathbb{C}$.
The transformation $f:\mathbb{F}^{n\times n}\rightarrow \mathbb{F}^{n\times n}$ is a
matrix operator satisfying the following suitable assumptions:
\begin{itemize}
\item[(a1)]Period-$2$: $f^{(2)}(A):=f(f(A))=A,\, A\in\mathbb{F}^{n\times n}$.
\item[(a2)]Preserve additions (or additivity): $f(A+B)=f(A)+f(B),\, A,\,B\in\mathbb{F}^{n\times n}$.
\item[(a3)]Preserve products: Multiplication order preserving if $f(AB)=f(A)f(B)$ or multiplication order reversing if $f(AB)=f(B)f(A)$,\, $A,\,B\in\mathbb{F}^{n\times n}$.
\item[(a4)]Preserve nonnegativity: $f(A)\geq 0$ whenever $A\geq 0$. Here, we use the usual partial order for Hermitian matrices, i.e.,  $X > Y (X \geq Y )$ if $X-Y$ is positive definite (semidefinite) for two Hermitian matrices $X$ and $Y$.
\end{itemize}
We begin with a brief study of some basic properties of operator $f$.
Some interesting features about $f$ are given here.
\begin{Proposition}\label{pro0}
\par\noindent
\begin{itemize}
\item[(1)] $f(rX)=rf(X)$ for any rational number $r$ and every $X$ in $\mathbb{F}^{n\times n}$. Moreover,
$f$ is a continuous operator on $\mathbb{F}^{n\times n}$ if $f$ is continuous at $X=0$.
\item[(2)] $f$ is said to be unital, i.e., $f(I_n)=I_n$. And $A$ is nonsingular if and only if $f(A)$ is nonsingular. Moreover, $f(A^{-1})=f(A)^{-1}$ whenever $A$ is invertible.
\item[(3)] $f$ is the so-called order preserving operator on $\mathbb{H}^{n\times n}$, i.e., $f(A)\geq f(B)$ ($f(A)\geq f(B)$) for $A\geq B$ ($A> B$), $A,B\in\mathbb{H}^{n\times n}$, where $\mathbb{H}^{n\times n}$ is the set of all $n\times n$ Hermitian matrices.
\item[(4)] $f$ is adjoint-preserving, i.e., $f(A^H)=f(A)^H$
for all $A\in\mathbb{F}^{n\times n}$.
\end{itemize}
\end{Proposition}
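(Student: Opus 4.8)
I would prove the four items in order; (1)--(3) are short and item (4) carries the real weight. Item (1) uses only additivity (a2): $f(0)=f(0)+f(0)$ gives $f(0)=0$, hence $f(-X)=-f(X)$, hence $f(mX)=mf(X)$ for $m\in\mathbb Z$ by induction, and $q\,f(\tfrac{p}{q}X)=f(pX)=p\,f(X)$ gives $\mathbb Q$-homogeneity; if $f$ is continuous at $0$ then $f(X)-f(X_0)=f(X-X_0)\to f(0)=0$, so $f$ is continuous everywhere. (One can in fact show $f$ is $\mathbb R$-linear, hence unconditionally continuous: $tI$ is central, so $f(tI)$ is central, $f(tI)=\lambda(t)I$ with $\lambda$ additive, multiplicative, monotone and $\lambda(1)=1$, forcing $\lambda(t)=t$; but this is not needed.) For (2), (a1) makes $f$ a bijection with $f^{-1}=f$, in particular surjective. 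By (a3), $f(I)=f(I\cdot I)=f(I)^2$, and from $f(A)=f(AI)=f(IA)$ one gets $f(A)f(I)=f(A)=f(I)f(A)$ for every $A$; surjectivity then forces $f(I)$ to be a two-sided identity, so $f(I)=I$. Applying $f$ to $AA^{-1}=A^{-1}A=I$ and using (a3) gives $f(A)f(A^{-1})=f(A^{-1})f(A)=I$, so $f(A)$ is invertible with inverse $f(A^{-1})$; applying this with $A$ replaced by $f(A)$ and using (a1) yields the converse implication.

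For (3): if $A\ge B$ are Hermitian then $A-B\ge0$, so by (a4) $f(A-B)$ is positive semidefinite (in particular Hermitian), and by (a2) it equals $f(A)-f(B)$; thus $f(A)\ge f(B)$, and when $A>B$ the difference is invertible, hence by (2) so is $f(A-B)$, and an invertible positive semidefinite matrix is positive definite, giving $f(A)>f(B)$. For (4) I would first note that $f$ maps $\mathbb H^{n\times n}$ into itself: a Hermitian $A$ is a difference $A=A_+-A_-$ of positive semidefinite matrices (spectral theorem), so by (a4) $f(A_\pm)$ are Hermitian and by (a2) $f(A)=f(A_+)-f(A_-)$ is Hermitian. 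Hence $f$ restricts to a bijection of $\mathbb H^{n\times n}$ (injectivity from $f$; surjectivity since for $B\in\mathbb H^{n\times n}$ one has $B=f(f(B))$ with $f(B)\in\mathbb H^{n\times n}$). It then suffices to prove that $f$ maps the real subspace $\mathbb S$ of skew-Hermitian matrices (skew-symmetric matrices, in the real case) into itself: writing an arbitrary $A=H+S$ with $H=\tfrac{1}{2}(A+A^H)\in\mathbb H^{n\times n}$ and $S=\tfrac{1}{2}(A-A^H)\in\mathbb S$, additivity then gives $f(A)^H=f(H)^H+f(S)^H=f(H)-f(S)=f(H-S)=f(A^H)$.

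To prove $f(\mathbb S)\subseteq\mathbb S$, fix $S\in\mathbb S$ and put $M=f(S)$. Two observations. First, $-S^2=S^HS\ge0$, so by (a4) and (a2) $f(S^2)\le0$, while by (a3) $f(S^2)=f(S)^2$; hence $M^2\le0$, and the same reasoning shows $N^2\le0$ for every $N\in f(\mathbb S)$. Second, for any $K\in\mathbb H^{n\times n}$ the Jordan product $S\circ K:=\tfrac{1}{2}(SK+KS)$ is skew-Hermitian, and (a3), in either of its two forms, gives $f(S)\circ f(K)=f(S\circ K)\in f(\mathbb S)$. Now take a rank-one orthogonal projector $P=vv^H$ with $\|v\|=1$; since $P\in\mathbb H^{n\times n}=f(\mathbb H^{n\times n})$ we may write $P=f(K)$ with $K=f(P)\in\mathbb H^{n\times n}$, and from $P^2=P$ one checks $(M\circ P)\circ P=\tfrac{1}{2}(M\circ P)+\tfrac{1}{2}\,PMP$, so $PMP=2(M\circ P)\circ P-(M\circ P)$ lies in $f(\mathbb S)$ by the second observation and (a2); by the first observation $(PMP)^2\le0$. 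But $PMP=(v^HMv)\,vv^H$, so $(PMP)^2=(v^HMv)^2\,vv^H\le0$ with $vv^H\ge0$ nonzero, which forces $(v^HMv)^2$ to be a real number $\le0$, i.e. $v^HMv$ is purely imaginary, for every $v$; therefore $M+M^H=0$, that is, $M\in\mathbb S$. (In the real case $(v^TMv)^2\,vv^T$ is both $\ge0$ and $\le0$, so $v^TMv=0$ for all $v$ and $M+M^T=0$.)

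The main obstacle is precisely this rigidity statement $f(\mathbb S)\subseteq\mathbb S$. The elementary relation $f(S)^2\le0$ is by itself far too weak---many non-skew matrices have negative semidefinite square---and the decisive extra input is that $f(\mathbb S)$ is stable under Jordan multiplication by Hermitian matrices, which lets one isolate the compressions $PMP$ and run the rank-one test above. Getting this argument exactly right, uniformly for the complex and real fields and for both conventions in (a3), is where I would expect the work to concentrate; everything else (items (1)--(3) and the reduction of (4) to Hermitian arguments) is routine.
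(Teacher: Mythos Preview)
Your treatment of (1)--(3) is essentially the paper's, with only cosmetic variation (for $f(I)=I$ you appeal to surjectivity of $f$, while the paper specializes $A=f(f(A)I)=Af(I)$ to an invertible $A$; both work).

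For (4) the reduction is also the same: write $A=H+S$ with $H$ Hermitian and $S$ skew-Hermitian, handle $H$ via $H=A_+-A_-$ and (a4), and reduce everything to proving that $M:=f(S)$ is skew-Hermitian. Both you and the paper record the key relation $M^2=f(S^2)\le 0$. From there the approaches diverge. The paper simply asserts that because $M^2\le 0$, the matrix $M$ is unitarily diagonalizable with spectrum in $i\mathbb R$, hence $M^H=-M$. That inference is not justified as stated: the condition ``$M^2$ Hermitian and $\le 0$'' does not by itself force $M$ to be normal (for instance $M=\bigl(\begin{smallmatrix}0&1\\-2&0\end{smallmatrix}\bigr)$ has $M^2=-2I<0$ yet $M$ is not skew). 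Your Jordan-product argument supplies exactly the missing rigidity: closure of $f(\mathbb S)$ under $N\mapsto N\circ P$ for Hermitian $P$ (a consequence of (a3), (a2) and the bijectivity of $f$ on $\mathbb H^{n\times n}$) lets you extract the rank-one compressions $PMP\in f(\mathbb S)$, and then $(PMP)^2\le 0$ with $P=vv^H$ forces $v^HMv\in i\mathbb R$ for every $v$, which is an honest proof that $M+M^H=0$. So on item (4) your route is genuinely different from the paper's and in effect repairs the step the paper leaves unproven; the price is a slightly longer argument, the gain is that it works uniformly over $\mathbb R$ and $\mathbb C$ and for both conventions in (a3), precisely as you anticipated.
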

\begin{proof}
\par\noindent
\begin{itemize}
\item[1.] Let $X\in\mathbb{F}^{n\times n}$ and $r=m/n$, where $m$ and $n$ are two integers with $n\neq 0$.
It can be easily seen that $f(0_{n\times n})=0_{n\times n}$ and $f(-X)=-f(X)$. Without loss of generality we assume that $m$ and $n$ are positive. Clearly, $nf(rX)=f(nrX)=f(mX)=mf(X)$ and therefore $f(rX)=rf(X)$. The remaining part follows directly from the definition of a continuous operator.
\item[2.] Let $A\in\mathbb{F}^{n\times n}$ be a nonsingular matrix. Since $A=f(f(A)I_n)=Af(I_n)$ or $f(I_n)A$, thus $f(I_n)=I_n$. It is trivial that $f(A)$ is nonsingular and $f(A)=f(A^{-1})^{-1}$. This completes the proof.
\item[3.] In the case of \text{``$\geq$''}, the result is immediately clear from the definition of order preserving operator. The result is valid in the case of \text{``$>$''} since $A-B$ is nonsingular.
\item[4.] First, for a Hermitian matrix $A$ it is easily seen that $A$ can be decomposed as $A=A_1-A_2$ with two positive matrices $A_1$ and $A_2$ and thus $f(A^H)=f(A_1)-f(A_2)=f(A)^H$. Now, for an arbitrary complex matrix $A$ we have the so-called Cartesian decomposition~\cite{Bernstein2005}
    \begin{align*}
    A=H_1+i H_2,
    \end{align*}
    where $H_1$ and $H_2$ are two Hermitian matrices. The result will be verified by showing that
    \[
     f(-iH_2)= f(iH_2)^H.
    \]
Indeed, $f(iH_2)^2=f(-H_2^2)<0$. We conclude that $f(iH_2)$ is unitarily equivalent to a diagonal matrix diag$(\pm i\sqrt{h_1},\cdots,\pm i\sqrt{h_n})$, where $h_i$ are real positive eigenvalues of $-f(iH_2)^2$, $i=1,\cdots, n$. The result is proved.
\end{itemize}
\end{proof}

Clearly, for any $X \in\mathbb{F}^{n\times n}$  the identity operator $f(X):=X$, the transpose operator $f(X):=X^\top$ if $\mathbb{F}=\mathbb{R}$, and the conjugate operator $f(X):=\overline{X}$ if $\mathbb{F}=\mathbb{C}$ all of these satisfy conditions (a1)--(a4). Let $\mathbb{F}=\mathbb{R}$ and $f$ be a bijective linear operator. It follows from \cite{emrl2006364} that there exists a invertible matrix $T_f$ such that
\begin{itemize}
\item[a.] $f(X)=T_fXT_f^{-1}$ if $f$ is multiplication preserving,
\end{itemize}
or
\begin{itemize}
\item[b.] $f(X)=T_fX^\top T_f^{-1}$ if $f$ is multiplication reversing,
\end{itemize}
for all $X\in\mathbb{F}^{n \times n}$. See \cite{emrl2006364,Fang2006601} for details.
Therefore, a bijective continuous operator $f$ satisfying (a3) can be reduced to the identity or transpose operator $f$ in the real field.

NMEs like the form~\eqref{eq:NME} occur frequently in many applications, that
include control theory, ladder networks, dynamic programming,
stochastic filtering and statics when $f$ is the identity operator \cite{Anderson90,Zhan_96_SISC}.
Notable examples include algebraic Riccati equations \cite{Chu05,Chu04,Hwang05,Hwang07,Lin06}.
The main application of Eq.~\eqref{eq:NMEP} with conjugate operator $f$ arises from the study of consimilarity . For more detail of application of NMEs, see \cite{Li2014546,Zhou20137377}. In this paper, we are interested in the study of the positive definite solutions of Eqs.~\eqref{eq:NME} and its solvable condition. By making use of Sherman Morrison Woodbury formula~\cite{Bernstein2005}, first we propose a kind of fixed-point iterative method
\begin{subequations}\label{fix0}
\begin{align}
X_k&=F({X}_{k-1},{X}_{1}),\\
 {X}_1&{\mbox{ is given}},
\end{align}
\end{subequations}
for finding the maximal positive definite solutions of Eqs.~\eqref{eq:NME}. As is well-known in
 the study of ordinary differential equations, let $x(t)$ be a solution of the autonomous system,
\begin{subequations}\label{ivp}
\begin{align}
 \dot{x}&=F(x(t)),\, t\geq0,\\
 x(0)&=x_0{\mbox{ is given}},
\end{align}
\end{subequations}
and we define a flow $\phi$ which is the mapping $\phi_{x_0}(t):=x(t)$.
 Then, for any $s,t\geq0$ the flow $\phi$ satisfies the following \emph{group law}
 \[
 \phi_{x_0}(s+t)=\phi_{x_s}(t)=\phi_{x_t}(s)
 \]
 if the problem~\eqref{ivp} is uniquely solvable for any initial value $x(0)$. Similar to the group law of the phase flow of a differential equation with an initial value, let the ordinary difference equation be
 \begin{subequations}\label{ivp1}
\begin{align}
 x(k+1)&=F(x(k)),\, k\geq1,\\
 x(1)&=x_1{\mbox{ is given}},
\end{align}
\end{subequations}
associated with the iterated function $F$ and a starting value $x_1$. We also have
 \begin{align}\label{group0}
 \phi_{x_1}(s+t-1)=\phi_{x_s}(t)=\phi_{x_t}(s),\, s,t\geq 1,
 \end{align}
 where the iteration solution $\phi_{x_1}(n):=x(n)$ for the problem \eqref{ivp1}. However, the group law \eqref{group0}
 is not always guaranteed if $F$ is associated with the initial value $x_1$.
 \begin{example}
  Let  $x_k=f(x_{k-1},x_1)=ax_{k-1}+x_1$, where $a$ and $x_1$ are given as two nonzero real numbers. Then, $\phi_{x_1}(k)=\sum\limits_{i=0}^{k-1} a^i x_1$.
  Clearly,  $\phi_{x_1}(s+t-1)\neq \phi_{x_s}(t)$ for $s=2$, $t=1$ and $a\neq-1$.
  \end{example}
The group law \eqref{group0} plays an important role in the techniques to study an efficient iterative
algorithm for solving the solutions of some nonlinear matrix equations \cite{ChiangThesis08,Bini2012}. In the paper, we derive a property similar to the group law to the iteration \eqref{fix0}. With the help of this property, two accelerated iterative methods for solving the maximal positive definite solution of Eqs.~\eqref{eq:NME} are developed based on \eqref{fix0}. In addition, many elegant properties of this kind of iteration~\eqref{fix0} will be established.

 We introduce the following well-known results which we need in the rest of the paper. The results in the following lemma either follow immediately from the definition or are easy to verify.
\begin{Lemma}\cite{Bernstein2005}\label{Schur}
Let $A$ be an arbitrary matrix of size $n$. $X$ and $Y$ are two $n\times n$ positive definite matrices. Then,
\begin{itemize}
\item[1.][Sherman Morrison Woodbury formula (SMWF)] Assume that $Y^{-1}\pm AX^{-1}A^H$ is nonsingular. Then, $X\pm A^H Y A$ is invertible and
\[
(X\pm A^H Y A)^{-1}=X^{-1}\mp X^{-1}A^H(Y^{-1}\pm AX^{-1}A^H)^{-1}AX^{-1}.
\]
\item[2.][Schur complement] A square complex matrix $\Psi$ is partitioned as
\[
\Psi:=\bb X & A \\ A^H & Y\eb.
\]
Then, $\Psi>0$ ($\Psi\geq 0$) if and only if $Y-A^H X^{-1} A>0$ ($Y-A^H X^{-1} A\geq0$) if and only if $X-A Y^{-1}A^H>0$ ($X-A Y^{-1}A^H\geq0$).
\item[3.]$X>Y$ ($X\geq Y$) if and only if  $\rho(YX^{-1})=\rho(X^{-1}Y)<1$ ($\rho(X^{-1}Y)\leq 1$), where $\rho(X)$ denotes the spectral radius of $X$.
\end{itemize}
\end{Lemma}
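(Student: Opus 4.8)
The plan is to dispatch the three items independently; each reduces to a one-line block-matrix identity or to the spectral theorem for Hermitian matrices, so the only ``difficulty'' is sign bookkeeping.

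\emph{Item 1 (SMWF).} Since $X$ and $Y$ are positive definite they are invertible, and $S:=Y^{-1}\pm AX^{-1}A^H$ is invertible by hypothesis, so the candidate inverse $R:=X^{-1}\mp X^{-1}A^HS^{-1}AX^{-1}$ is well defined. I would multiply out $(X\pm A^HYA)\,R$ and use the identity $AX^{-1}A^H=\pm(S-Y^{-1})$ to rewrite the single cross term that is quadratic in $A$; after that substitution the term splits and cancels the remaining two cross terms, so the product equals $I_n$. Because $X\pm A^HYA$ is square, a right inverse is automatically two-sided, so this simultaneously proves that $X\pm A^HYA$ is invertible and that its inverse equals $R$. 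The only delicate point is purely cosmetic: the two $\pm/\mp$ choices must be carried as one linked choice, the sign in front of $AX^{-1}A^H$ inside $S$ being consistently opposite to that of the correction term in $R$.

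\emph{Item 2 (Schur complement).} With $X>0$ I would use the block congruence $\Psi=L\,\mathrm{diag}\!\left(X,\;Y-A^HX^{-1}A\right)L^H$ with the invertible factor $L=\bb I & 0\\ A^HX^{-1} & I\eb$ (note $L^H=\bb I & X^{-1}A\\ 0 & I\eb$). Congruence by an invertible matrix preserves positive (semi)definiteness, so $\Psi>0$ ($\Psi\ge0$) iff $\mathrm{diag}\!\left(X,\;Y-A^HX^{-1}A\right)>0$ ($\ge0$); since $X>0$ already, this is equivalent to $Y-A^HX^{-1}A>0$ ($\ge0$). The remaining equivalence follows from the analogous factorization keyed on the $(2,2)$ block $Y>0$, which produces $\mathrm{diag}\!\left(X-AY^{-1}A^H,\;Y\right)$.

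\emph{Item 3.} Since $X>0$, congruence by $X^{-1/2}$ gives $X>Y$ iff $I_n>W$, where $W:=X^{-1/2}YX^{-1/2}$. As $W$ is Hermitian and positive semidefinite, $I_n>W$ holds iff every eigenvalue of $W$ lies in $[0,1)$, i.e. iff $\rho(W)<1$; moreover $W$ is similar to both $X^{-1}Y$ (conjugate by $X^{-1/2}$) and $YX^{-1}$ (conjugate by $X^{1/2}$), whence $\rho(W)=\rho(X^{-1}Y)=\rho(YX^{-1})$. This gives $X>Y\iff\rho(YX^{-1})<1$, and the semidefinite version is identical with the strict inequality relaxed to ``$\le$''. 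The only thing to keep straight is that $X^{-1}Y$ need not be Hermitian, but it has real nonnegative spectrum because it is similar to the Hermitian positive semidefinite matrix $W$; this is exactly why the spectral-radius test is an equivalence rather than merely one implication.
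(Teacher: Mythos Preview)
Your argument for each of the three items is correct and is exactly the standard textbook route (direct verification for SMWF, block $LDL^H$ congruence for the Schur complement, and similarity to $X^{-1/2}YX^{-1/2}$ for the spectral-radius criterion). The paper itself gives no proof of this lemma at all: it is stated with a citation to Bernstein's \emph{Matrix Mathematics} and prefaced by the remark that the results ``either follow immediately from the definition or are easy to verify,'' so there is nothing to compare against beyond noting that your proof is the expected one.
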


Without loss of generality we assume that $f$ is multiplication order preserving throughout the paper. Let $\|.\|$ denote a norm on $\mathbb{F}^{n\times n}$ as well as the induced matrix norm.
  We say that a sequence of matrices $\{X_n\}$ converges R-linearly to $X$ if
\begin{align}\label{q1}
\|X_{k+1}-X\|<c\sigma^{k},
\end{align}
and converges R-superlinearly to $X$ with order $r$ if
\begin{align}\label{q2}
\|X_{k+1}-X\|<c\sigma^{r^k},
\end{align}
for arbitrary positive integer $k$,  where $c>0$, $0<\sigma<1$, and $r$ is an integer greater than $1$. The quantity $\sigma$ is called the convergence rate of this sequence. Especially, $X_k$ converges to $X$ is said to be R-sublinearly, R-quadratically and R-cubically if
$X_k\rightarrow X$ as $k\rightarrow\infty$ and $\sigma=1$ in \eqref{q1}, $r=2$ in \eqref{q2}, $r=3$ in \eqref{q2}, respectively, see~\cite{doi:10.1137/1.9780898719468,Kelley95,Bini2012} and the references therein.
  A positive definite solution $X_M$ of NME~\eqref{eq:NMEP} (or \eqref{eq:NMEM}) is called maximal (or minimal) if $X_M\geq X$ (or $X_M\leq X$) for any symmetric solution $X$ of Eq.~\eqref{eq:NMEP} (or \eqref{eq:NMEM}). The symbol $\mathbb{P}^{n\times n}$ stand for the set of $n\times n$ positive definite matrices. We denote the $m\times m$ identity matrix by $I_m$, the conjugate transpose matrix of $A$ by $A^H$, the spectrum of $A$ by $\sigma(A)$ and use $\det(A)$ to denote the determinant of a square matrix $A$. Given a matrix operator $\mathcal{F}$, $\mathcal{F}^{(i)}$ denote the composition of $\mathcal{F}$ with itself $i$ times and  $\mathcal{F}^{(0)}:=I$ is the identity map.

 The paper is organized as follows. In Sections~2 and 3, we describe how to transform Eqs.~\eqref{eq:NME} to a standard nonlinear matrix equation and provide a fixed-point iteration to compute the maximal positive definite solution. Moreover, we formulate the necessary and sufficient conditions for the existence of the maximal positive definite solution of Eq.~\eqref{eq:NMEP} or Eq.~\eqref{eq:NMEM} directly by means of the solvable analysis of the standard nonlinear matrix equation. A R-superlinearly convergent iterative method with order $r>1$ is briefly discussed in Section 4. Also, in Section~5 an alternating iteration gives the same result as considering a different substraction. Finally, concluding remarks are given in Section 6.

\section{The transformation technique based on SMWF}
In \cite{Chiang2015}, we recently proposed a standard way to find a sufficient condition for the unique solvability of a class of Sylvester equations. A useful method to investigate a famous matrix equation is to simplify it by applying suitable transformations to the unknowns or to the coefficient matrices. There, the goal was to analyze Eqs.~\eqref{eq:NME} with the help of some well-known matrix equations.

To facilitate our discussion, we first transform Eqs.~\eqref{eq:NME}  into the standard nonlinear matrix equation after one step of SMWF:
\begin{align}\label{NMEP}
X+(A^{(1)})^H (X-B^{(1)})^{-1} A^{(1)}=Q^{(1)},
\end{align}
where the initial matrices
\begin{subequations}\label{initial2}
\begin{align}\label{initial}
A^{(1)}:=f(A)f(Q)^{-1}A,\, B^{(1)}:=f(A)f(Q)^{-1}f(A)^H,\,\mbox{and}\quad  Q^{(1)}:=Q-A^H f(Q)^{-1}A
\end{align}
corresponding to the Eq.~\eqref{eq:NMEP} and the initial matrices
\begin{align}\label{initial1}
A^{(1)}:=f(A)f(Q)^{-1}A,\, B^{(1)}:=-f(A)f(Q)^{-1}f(A)^H,\,\mbox{and}\quad  Q^{(1)}:=Q+A^H f(Q)^{-1}A
\end{align}
corresponding to the Eq.~\eqref{eq:NMEM}.
\end{subequations}
For the sake of simplicity let the matrix operators $\mathcal{F}_{+}$ and $\mathcal{F}_{-}$ be defined by
\begin{align}\label{eq:NME1}
X=\mathcal{F}_{\pm}(X):=Q\mp A^{H}f(X)^{-1}A,
\end{align}
respectively,
and let $\mathcal{G}_{\pm}$ be defined by
\begin{align}\label{eq:dualNME}
\widetilde{X}=\mathcal{G}_{\pm}(\widetilde{X}):=Q\mp f(A)f(\widetilde{X})^{-1}f(A)^{H}.
\end{align}
Clearly, $\mathcal{F}_+$ and $-\mathcal{F}_-$  are order preserving for any positive integer $k$.
For convenience we adopt the notation $\mathcal{F}(X)$ by $Q^{(1)}-(A^{(1)})^H (X-B^{(1)})^{-1} A^{(1)}$,
repeating the same argument gives that
\begin{align}\label{kNME}
X&=\mathcal{F}^{(k)}(X):=Q^{(k)}-(A^{(k)})^H (X-B^{(k)})^{-1} A^{(k)},
\end{align}
 where the sequences of matrices $\{A^{(k)}\}$,  $\{B^{(k)}\}$ and $\{Q^{(k)}\}$ are generated (if no breakdown occurs) by
\begin{subequations}\label{signal}
\begin{align}
A^{(k)}&:=A^{(1)}(Q^{(1)}-B^{(k-1)})^{-1}A^{(k-1)},\label{signalA}\\
B^{(k)}&:=B^{(1)}+A^{(1)}(Q^{(1)}-B^{(k-1)})^{-1}(A^{(1)})^H,\label{signalB}\\
Q^{(k)}&:=Q^{(k-1)}-(A^{(k-1)})^H (Q^{(1)}-B^{(k-1)})^{-1} A^{(k-1)}\label{signalQ},
\end{align}
\end{subequations}
for any positive integer $k>1$. Now, we study some characteristics about the iterated function~\eqref{signal}.
For simplicity's sake we define the symbol $\widehat{\mathbb{X}}^{(k)}:=(\widehat{A}^{(k)},\widehat{B}^{(k)},\widehat{Q}^{(k)})$ and we rewrite the iteration
\begin{align*}
\widehat{A}^{(k)}&:=A_2(Q_2-\widehat{B}^{(k-1)})^{-1}A^{(k-1)},\\
\widehat{B}^{(k)}&:=B_2+A_2(Q_2-\widehat{B}^{(k-1)})^{-1}(A_2)^H,\\
\widehat{Q}^{(k)}&:=\widehat{Q}^{(k-1)}-(\widehat{A}^{(k-1)})^H (Q_2-\widehat{B}^{(k-1)})^{-1} \widehat{A}^{(k-1)},
\end{align*}
with initial matrices $\mathbb{X}_1:=(A_1,B_1,Q_1)$ and constant matrices $\mathbb{X}_2:=(A_2,B_2,Q_2)$ as the notation
\begin{align}\label{iteration}
\widehat{\mathbb{X}}^{(k)}=\mathcal{I}_{\mathbb{X}_1}(\widehat{\mathbb{X}}^{(k-1)},\mathbb{X}_2),
\end{align}
 where the iterated function $\mathcal{I}_{\mathbb{X}_1}:\mathbb{F}^n\times\mathbb{H}^n\times\mathbb{H}^n\rightarrow \mathbb{F}^n\times\mathbb{H}^n\times\mathbb{H}^n$ presents the relationship between $(\widehat{A}^{(k-1)},\widehat{B}^{(k-1)},\widehat{Q}^{(k-1)})$ and $(\widehat{A}^{(k)},\widehat{B}^{(k)},\widehat{Q}^{(k)})$.  Especially, we denote the iteration \eqref{signal} with  initial matrices $\mathbb{X}^{(1)}=(A^{(1)},B^{(1)},Q^{(1)})$ by $\mathbb{X}^{(k)}=\mathcal{I}_{\mathbb{X}^{(1)}}(\mathbb{X}^{(k-1)},\mathbb{X}^{(1)})=(A^{(k)},B^{(k)},Q^{(k)})$.
 Now, the following fundamental \emph{group-like law} holds and would provide a great advantage for emerging some numerical algorithms.
\begin{Proposition}\label{trans}
 Suppose that all sequences of matrices generated by iterations \eqref{signal} with initial matrices \eqref{initial2} are no breakdown. Assume that $\widehat{\mathbb{X}}^{(k)}=\mathcal{I}_{\mathbb{X}^{(i)}}(\widehat{\mathbb{X}}^{(k-1)},\mathbb{X}^{(j)})$ is well-defined for any integers $i,j\geq 1$ and $k>1$. We conclude that the sequence $\{\mathbb{X}^{(i)}\}$ satisfies
\begin{align*}
\mathbb{X}^{(i+j)}=\widehat{\mathbb{X}}^{(2)}.
\end{align*}
Namely, we have
\begin{subequations}\label{tran1}
\begin{align}
A^{(i+j)}&=A^{(j)}(Q^{(j)}-B^{(i)})^{-1}A^{(i)},\label{tran1a}\\
B^{(i+j)}&=B^{(j)}+A^{(j)}(Q^{(j)}-B^{(i)})^{-1}(A^{(j)})^H,\label{tran1b}\\
Q^{(i+j)}&=Q^{(i)}-(A^{(i)})^H (Q^{(j)}-B^{(i)})^{-1} A^{(i)}.\label{tran1q}
\end{align}
\end{subequations}
Furthermore, we have
\begin{align}\label{ijk}
\mathbb{X}^{(i+(k-1)j)}&=\widehat{\mathbb{X}}^{(k)},
\end{align}
where $k$ is any positive integer.
\end{Proposition}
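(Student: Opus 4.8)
The plan is to establish the coefficient identities \eqref{tran1a}--\eqref{tran1q} by induction on $j\geq 1$, with $i\geq 1$ held fixed; conceptually these say that the ``$(i+j)$-step'' transformation is the composition of the ``$i$-step'' and ``$j$-step'' ones, and \eqref{signal} is exactly their specialization to $j=1$. Granting \eqref{tran1}, the equality $\mathbb{X}^{(i+j)}=\widehat{\mathbb{X}}^{(2)}$ is just a rewriting: taking $\widehat{\mathbb{X}}^{(1)}=\mathbb{X}^{(i)}$ and the constant triple $\mathbb{X}^{(j)}$ in the iteration \eqref{iteration} makes the three components of $\widehat{\mathbb{X}}^{(2)}$ literally the right-hand sides of \eqref{tran1a}--\eqref{tran1q}. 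As a preliminary I would record that every $Q^{(k)}$ and every $B^{(k)}$ is Hermitian --- this holds at $k=1$ by \eqref{initial2} since $f(Q)^{H}=f(Q)$ (Proposition~\ref{pro0}(4)), and it is preserved by \eqref{signalB}--\eqref{signalQ} --- so that all matrices inverted below are Hermitian and $(\cdot)^{H}$ may be commuted past their inverses without comment.

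For the base case $j=1$ there is nothing to prove: \eqref{tran1a}--\eqref{tran1q} at $(i,1)$ are, character for character, the defining recursions \eqref{signalA}--\eqref{signalQ} read at $k=i+1$. For the inductive step, assume \eqref{tran1} at $(i,j)$ and abbreviate $W_1:=Q^{(1)}-B^{(j)}$, $W_2:=Q^{(j)}-B^{(i)}$, and $P:=(A^{(j)})^{H}W_1^{-1}A^{(j)}$. Substituting the induction hypothesis \eqref{tran1b} for $B^{(i+j)}$ gives $Q^{(1)}-B^{(i+j)}=W_1-A^{(j)}W_2^{-1}(A^{(j)})^{H}$, so the Sherman--Morrison--Woodbury formula (Lemma~\ref{Schur}, part~1, with the minus sign) produces
\[
\bigl(Q^{(1)}-B^{(i+j)}\bigr)^{-1}=W_1^{-1}+W_1^{-1}A^{(j)}\,W_3^{-1}\,(A^{(j)})^{H}W_1^{-1},\qquad W_3:=W_2-P .
\]
The decisive observation is that $W_3=Q^{(j+1)}-B^{(i)}$, which is precisely the recursion \eqref{signalQ} at $k=j+1$ rearranged; its invertibility is part of the no-breakdown hypothesis at level $(i,j+1)$. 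Inserting this expansion together with the induction hypotheses $A^{(i+j)}=A^{(j)}W_2^{-1}A^{(i)}$ and $Q^{(i+j)}=Q^{(i)}-(A^{(i)})^{H}W_2^{-1}A^{(i)}$ into the defining recursions \eqref{signalA}--\eqref{signalQ} at $k=i+j+1$, and then repeatedly using the trivial cancellation $I+W_3^{-1}P=W_3^{-1}W_2$ (equivalently $I+PW_3^{-1}=W_2W_3^{-1}$), the relation $A^{(1)}W_1^{-1}A^{(j)}=A^{(j+1)}$ from \eqref{signalA}, and $B^{(1)}+A^{(1)}W_1^{-1}(A^{(1)})^{H}=B^{(j+1)}$ from \eqref{signalB}, collapses the three resulting expressions exactly to \eqref{tran1a}--\eqref{tran1q} at $(i,j+1)$; the $Q$-component additionally uses the symmetric simplification $P+PW_3^{-1}P=W_2W_3^{-1}P$. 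This is the whole substance of the argument, and the only delicate point is arranging the Schur-complement identity $W_3=Q^{(j+1)}-B^{(i)}$ so that the SMWF expansion refers to the index $j+1$ rather than to the raw index $j$; once that is in place the three components unwind by routine matrix algebra.

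Finally, I would prove \eqref{ijk} by induction on $k$. The case $k=1$ reads $\widehat{\mathbb{X}}^{(1)}=\mathbb{X}^{(i)}=\mathbb{X}^{(i+0\cdot j)}$, and $k=2$ is the first assertion of the proposition. By the definition of the iteration \eqref{iteration} and the already-proved \eqref{tran1} one has $\mathcal{I}_{\mathbb{X}^{(i)}}\!\bigl(\mathbb{X}^{(m)},\mathbb{X}^{(j)}\bigr)=\mathbb{X}^{(m+j)}$ for every $m\geq 1$ (the subscript $\mathbb{X}^{(i)}$ is inert for this transition); applying this with $m=i+(k-1)j$ to $\widehat{\mathbb{X}}^{(k+1)}=\mathcal{I}_{\mathbb{X}^{(i)}}\!\bigl(\widehat{\mathbb{X}}^{(k)},\mathbb{X}^{(j)}\bigr)=\mathcal{I}_{\mathbb{X}^{(i)}}\!\bigl(\mathbb{X}^{(i+(k-1)j)},\mathbb{X}^{(j)}\bigr)$ gives $\widehat{\mathbb{X}}^{(k+1)}=\mathbb{X}^{(i+kj)}$, which is \eqref{ijk} at $k+1$. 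This closes the induction and the proof.
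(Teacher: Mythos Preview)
Your proof is correct and rests on the same tools as the paper's --- the Sherman--Morrison--Woodbury expansion of $(Q^{(1)}-B^{(i+j)})^{-1}$ and the Schur-complement identity $W_3=Q^{(j+1)}-B^{(i)}$ --- but the induction is organised differently. The paper runs a \emph{nested} induction: it first establishes \eqref{tran1} for $i=1$ and all $j$ (itself by induction on $j$), and only then performs the induction step $i=s\Rightarrow i=s+1$ for arbitrary $j$. You instead do a \emph{single} induction on $j$ with $i$ held fixed, exploiting that the base case $j=1$ is literally the defining recursion \eqref{signal}. This is a genuine economy: the paper's base case requires a full inductive argument of its own, whereas yours is vacuous, and your inductive step needs only the definition \eqref{signal} at level $j+1$ and the hypothesis at $(i,j)$. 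The treatment of \eqref{ijk} is essentially the same in both proofs (induction on $k$, using that one application of $\mathcal{I}$ with constant triple $\mathbb{X}^{(j)}$ advances the superscript by $j$); your remark that the subscript $\mathbb{X}^{(i)}$ is inert for a single transition is the clean way to say this.
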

\begin{proof}
First, let $i$ and $j$ be any two integers and $\Delta_{j,i}:=(Q^{(j)}-B^{(i)})^{-1}$.
For each $j$, we will prove \eqref{tran1} by the principle of mathematical induction with respect to $i$.
The proof is divided into two parts,
\begin{enumerate}
 \item For $i=1$, we show that
\begin{align*}
A^{(1+j)}&=A^{(j)}\Delta_{j,1}A^{(1)},\\
B^{(1+j)}&=B^{(j)}+A^{(j)}\Delta_{j,1}(A^{(j)})^H,\\
Q^{(1+j)}&=Q^{(1)}-(A^{(1)})^H \Delta_{j,1} A^{(1)},
\end{align*}
 by using induction. For $j=1$ it is trivial from the definition of $A^{(2)}$, $B^{(2)}$ and $Q^{(2)}$. Now suppose that it is true for $j=s$. Note that
\begin{align*}
\Delta_{1,s+1}&=\Delta_{1,s}+\Delta_{1,s}A^{(s)}\Delta_{s+1,1}(A^{(s)})^H\Delta_{1,s},\\
\Delta_{s+1,1}&=\Delta_{s,1}+\Delta_{s,1}(A^{(1)})^H\Delta_{1,s+1}A^{(1)}\Delta_{s,1}.
\end{align*}
Then,
\begin{align*}
A^{(s+2)}&=A^{(1+s+1)}=A^{(1)} \Delta_{1,s+1} A^{(s+1)}\\
&=A^{(1)}\left[\Delta_{1,s}+\Delta_{1,s}A^{(s)}\Delta_{s+1,1}(A^{(s)})^H\Delta_{1,s}\right]A^{(s)}\Delta_{s,1}A^{(1)}\\
&=A^{(s+1)}\left[\Delta_{s,1}+\Delta_{s+1,1}(A^{(s)})^H\Delta_{1,s}A^{(s)}\Delta_{s,1}\right]A^{(1)}\\
&=A^{(s+1)}\Delta_{s+1,1}\left[Q^{(s+1)}-B^{(1)}+(A^{(s)})^H\Delta_{s,1}A^{(s)}\right]\Delta_{s,1}A^{(1)}\\
&=A^{(s+1)}\Delta_{s+1,1}A^{(1)},
\end{align*}

\begin{align*}
B^{(s+2)}&=B^{(1+s+1)}=B^{(1)}+A^{(1)} \Delta_{1,s+1} (A^{(1)})^H\\
&=B^{(1)}+A^{(1)}\left[\Delta_{1,s}+\Delta_{1,s}A^{(s)}\Delta_{s+1,1}(A^{(s)})^H\Delta_{1,s}\right](A^{(1)})^H\\
&=B^{(s+1)}+A^{(s+1)} \Delta_{s+1,1} (A^{(s+1)})^H
\end{align*}
and
\begin{align*}
Q^{(s+2)}&=Q^{(1+s+1)}=Q^{(s+1)}-(A^{(s+1)})^H\Delta_{1,s+1}A^{(s+1)} \\
&=Q^{(1)}-(A^{(1)})^H \Delta_{s,1} A^{(1)}-(A^{(1)})^H\Delta_{s,1}(A^{(s)})^H\Delta_{1,s+1}A^{(s)}\Delta_{s,1}A^{(1)}\\
&=Q^{(1)}-(A^{(1)})^H\Delta_{s+1,1}A^{(1)}.
\end{align*}
 The result is proved.
\item Now suppose that \eqref{tran1} is true for $i=s$ and any $j$.
 Note that
\begin{align*}
\Delta_{j,s+1}&=\Delta_{j,s}+\Delta_{j,s}A^{(s)}\Delta_{s+j,1}(A^{(s)})^H\Delta_{j,s},\\
\Delta_{s+j,1}&=\Delta_{s,1}+\Delta_{s,1}(A^{(s)})^H\Delta_{j,s+1}A^{(s)}\Delta_{s,1}.
\end{align*}
 Then,
 \begin{align*}
A^{(s+1+j)}&=A^{(1+s+j)}=A^{(s+j)} \Delta_{s+j,1} A^{(1)}\\
&=A^{(j)}\Delta_{j,s}A^{(s)} \left[\Delta_{s,1}-\Delta_{s,1}(A^{(s)})^H\Delta_{j,s+1}A^{(s)}\Delta_{s,1}\right]A^{(1)}\\
&=A^{(j)}\left[\Delta_{j,s}-\Delta_{j,s}A^{(s)} \Delta_{s,1}(A^{(s)})^H\Delta_{j,s+1}\right]A^{(s+1)}\\
&=A^{(j)}\Delta_{j,s}\left[Q^{(j)}-B^{(s+1)}-A^{(s)} \Delta_{s,1}(A^{(s)})^H\right]\Delta_{j,s+1}A^{(i)}\\
&=A^{(j)}\Delta_{j,s+1}A^{(s+1)},
\end{align*}
\begin{align*}
B^{(s+1+j)}&=B^{(1+s+j)}=B^{(s+j)}+A^{(s+j)} \Delta_{s+j,1} (A^{(s+j)})^H\\
&=B^{(j)}+A^{(j)}\left[\Delta_{j,s}+\Delta_{j,s}A^{(s)}\Delta_{s+j,1}(A^{(s)})^H\Delta_{j,s}\right](A^{(j)})^H\\
&=B^{(j)}+A^{(j)}\Delta_{j,s+1}(A^{(j)})^H,
\end{align*}
and
 \begin{align*}
Q^{(s+1+j)}&=Q^{(1+s+j)}=Q^{(1)}-(A^{(1)})^H\Delta_{s+j,1}A^{(1)}\\
&=Q^{(1)}-(A^{(1)})^H\Delta_{s,1}A^{(1)}-(A^{1})^H\left[\Delta_{s+j,1}-\Delta_{s,1}\right]A^{(1)}\\
&=Q^{(s+1)}-(A^{(1)})^H\Delta_{s,1}(A^{(s)})^H\Delta_{j,s+1}A^{(s)}\Delta_{s,1}A^{(1)}\\
&=Q^{(s+1)}-(A^{(s+1)})^H\Delta_{j,s+1}A^{(s+1)}.
\end{align*}
The induction process is now finished and thus the result is followed.
\end{enumerate}
Therefore, we have proved \eqref{tran1}. In addition, the formula \eqref{ijk} for any integer $k\geq 1$ can easily be proved by using induction. For $k=1$ it is clear that the formula holds. Assume that \eqref{ijk} is true for $k=s$ and any positive integers $i$ and $j$. Together with the group property we have
\begin{align*}
\widehat{\mathbb{X}}^{(s+1)}&=\mathcal{I}_{\widehat{\mathbb{X}}^{(1)}}(\widehat{\mathbb{X}}^{(s)},\mathbb{X}^{(i)})
=\mathcal{I}_{\widehat{\mathbb{X}}^{(s)}}(\widehat{\mathbb{X}}^{(1)},\mathbb{X}^{(i)})
=\mathcal{I}_{\mathbb{X}^{(i+(s-1)j)}}(\widehat{\mathbb{X}}^{(1)},\mathbb{X}^{(i)})=\mathbb{X}^{(i+sj)}.
\end{align*}
This completes the proof.
\end{proof}
\begin{Remark}\label{Rmk}
\par\noindent
\begin{itemize}
\item[1.] In the recursive algorithm~\eqref{signal}, the iteration $B^{(k)}$ is clear independent of the other two iterations. From Proposition~\ref{trans} we also have
    \begin{align}\label{fixQ}
    Q^{(k)}=Q^{(1)}-(A^{(1)})^H (Q^{(k-1)}-B^{(1)})^{-1} A^{(1)},
    \end{align}
    i.e., the iteration $Q^{(k)}$ is also independent of the other two iterations~\eqref{signalA} and ~\eqref{signalB} and is the fixed-point iteration of Eq.~\eqref{NMEP}. On the other hand, the $k$th iterations $Q^{(k)}$ and $B^{(k)}$  can be obtained from the following finite series form,
\begin{align*}
Q^{(k)}&=Q^{(1)}-\sum\limits_{i=1}^{k-1}  (A^{(i)})^H (Q^{(1)}-B^{(i)})^{-1} A^{(i)},\\
B^{(k)}&=B^{(1)}+\sum\limits_{i=1}^{k-1} A^{(i)}(Q^{(i)}-B^{(1)})^{-1}(A^{(i)})^H,
\end{align*}
respectively.
\item[2.]
We notice that two sequences $\{Q^{(k)}\}$ and $\{B^{(k)}\}$ can be respectively written as
\[
Q^{(k)}=\mathcal{F}_{+}^{(2)}(Q^{(k-1)})\,\,\mbox{and}\,\, B^{(k)}=\mathcal{G}_{+}^{(2)}(B^{(k-1)})=f(A\mathcal{G}_{+}(Q-B^{(k-1)})^{-1}A^H),
\]
with the initial matrices~\eqref{initial}.
And the sequences $Q^{(k)}$ and $B^{(k)}$ can be respectively written as
\[
Q^{(k)}=\mathcal{F}_{-}^{(2)}(Q^{(k-1)})\,\,\mbox{and}\,\, B^{(k)}=\mathcal{G}_{-}^{(2)}(B^{(k-1)})=-f(A\mathcal{G}_{-}(Q-B^{(k-1)})^{-1}A^H),
\]
with the initial matrices~\eqref{initial1}.
\end{itemize}
\end{Remark}

In order to study the Eq.~\eqref{eq:NMEP} and Eq.~\eqref{eq:NMEM}  explicitly and conveniently, the rest of this section is divided into two parts to investigate the existence of the positive definite solutions of those two equations.
\subsection{The solvability of Eq.~\eqref{eq:NMEP}}\label{2.1}
To study the existence of the positive definite solutions of the Eq.~\eqref{eq:NMEP},  we need
some conditions on matrices $A$, $B$ and operator $f$. The following lemma provides a mild condition for the existence of the maximal positive definite solutions of the Eq.~\eqref{eq:NMEP}. Note that the sequence of matrices $\{\mathbb{X}^{(k)}\}$ is generated by the iterations~\eqref{signal} with initial matrices $\mathbb{X}^{(1)}$ in \eqref{initial} in this subsection.
\begin{Lemma}\label{Lem1}
For the nonlinear matrix equation~\eqref{eq:NMEP} , suppose that the following condition holds,
    \begin{align}\label{cond1}
     S\neq \phi,\quad \mbox{where }S:=\{X_S>0;X_S\leq\mathcal{F}_{+}(X_S)\}.
    \end{align}
 Then, $\mathbb{X}^{(k)}$ is well-defined for any integer $k\geq 1$. Furthermore, we have the following properties,
 \begin{itemize}
 \item[(1)]
 The condition~\eqref{cond1} implies that $X_S$ is a lower bound of $\{Q^{(k)}\}$ and is an upper bound of $\{B^{(k)}\}$. Furthermore, we have
\begin{align}\label{result1}
 Q\geq Q^{(k)}\geq Q^{(k+1)}\geq X_S>B^{(k+1)}\geq B^{(k)}\geq 0,
\end{align}
 where $X_S$ is any positive definite matrix in $S$.
\item[(2)]$Q^{(k)}\geq \mathcal{F}_+(Q^{(k)})$.
%
\end{itemize}
\end{Lemma}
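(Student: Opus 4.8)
The plan is to prove the two claims of Lemma~\ref{Lem1} simultaneously by induction on $k$, since they are tightly coupled: controlling $Q^{(k)}$ from below forces $B^{(k)}$ to stay bounded above, which in turn keeps the matrix $Q^{(1)}-B^{(k)}$ that appears in the recursions~\eqref{signal} positive definite, hence guarantees no breakdown. First I would record the base case $k=1$: from the definitions~\eqref{initial}, $Q^{(1)}=\mathcal{F}_+(Q)=Q-A^Hf(Q)^{-1}A$, and $B^{(1)}=f(A)f(Q)^{-1}f(A)^H\geq 0$ by (a4). The hypothesis~\eqref{cond1} supplies $X_S>0$ with $X_S\leq \mathcal{F}_+(X_S)$; using that $\mathcal{F}_+$ is order preserving and that $Q=\mathcal{F}_+^{(0)}(\cdot)$ dominates any point of $S$ in the relevant sense, I would show $Q\geq Q^{(1)}\geq X_S$ and $0\leq B^{(1)}<X_S$. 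The key computation here is that $X_S\leq Q-A^Hf(X_S)^{-1}A$ rearranges, via the Schur complement (Lemma~\ref{Schur}, part 2) applied to the block matrix $\bb f(X_S) & f(A)^H \\ f(A) & Q-X_S\eb$ together with the unital/order/adjoint properties of $f$ from Proposition~\ref{pro0}, into the statement $X_S>B^{(1)}$ after one application of SMWF; and symmetrically $Q^{(1)}\geq X_S$.

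Next, assuming~\eqref{result1} holds at level $k$, I would push it to level $k+1$. From Remark~\ref{Rmk} (or directly from~\eqref{signal} combined with Proposition~\ref{trans}, eq.~\eqref{fixQ}) we have $Q^{(k+1)}=Q^{(1)}-(A^{(1)})^H(Q^{(k)}-B^{(1)})^{-1}A^{(1)}$ and $B^{(k+1)}=B^{(1)}+A^{(1)}(Q^{(k)}-B^{(1)})^{-1}(A^{(1)})^H$; note these are exactly $\mathcal{F}_+^{(2)}$ and $\mathcal{G}_+^{(2)}$ style updates. The inductive hypothesis gives $Q^{(k)}\geq X_S>B^{(1)}$, so $Q^{(k)}-B^{(1)}>0$ and the updates are well-defined — this is the no-breakdown step. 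Monotonicity $Q^{(k)}\geq Q^{(k+1)}$ and $B^{(k+1)}\geq B^{(k)}$ follows because the map $Z\mapsto Q^{(1)}-(A^{(1)})^H(Z-B^{(1)})^{-1}A^{(1)}$ is order preserving in $Z$ on the cone $Z>B^{(1)}$ (the inverse reverses order, the congruence and the sign flip restore it), and likewise for the $B$-update; feeding in $Q^{(k-1)}\geq Q^{(k)}$ (resp. the base comparison for $k=1$) propagates the chain. The bounds $Q^{(k+1)}\geq X_S$ and $B^{(k+1)}<X_S$ come from applying the same order-preserving map to the inequality $X_S\leq\mathcal{F}_+(X_S)$, rewritten after one SMWF step precisely as $X_S\leq Q^{(1)}-(A^{(1)})^H(X_S-B^{(1)})^{-1}A^{(1)}$ together with $X_S>B^{(1)}$; evaluating the map at $Z=Q^{(k)}\geq X_S$ and using monotonicity yields $Q^{(k+1)}\geq X_S$, and similarly $B^{(k+1)}<X_S$.

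Finally, claim~(2), $Q^{(k)}\geq\mathcal{F}_+(Q^{(k)})$, I would obtain as follows. The quantity $\mathcal{F}_+(Q^{(k)})=Q-A^Hf(Q^{(k)})^{-1}A$ after one step of SMWF equals $Q^{(1)}-(A^{(1)})^H(Q^{(k)}-B^{(1)})^{-1}A^{(1)}=Q^{(k+1)}$; hence claim~(2) is literally the monotonicity statement $Q^{(k)}\geq Q^{(k+1)}$ already established in part~(1). I expect the main obstacle to be the bookkeeping in the base case and the precise verification that one SMWF step converts $X\leq\mathcal{F}_+(X)$ into $X\leq Q^{(1)}-(A^{(1)})^H(X-B^{(1)})^{-1}A^{(1)}$ with the side condition $X>B^{(1)}$ — this requires knowing that $f(Q)^{-1}+Af(X)^{-1}A^H$ (equivalently, the relevant Schur complement built from $f$) is nonsingular/positive, which is where conditions (a1)--(a4) and the positivity of $X_S$ are genuinely used, rather than in any later step. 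Once that equivalence is in hand, everything else is a routine order-preserving induction on the chain~\eqref{result1}.
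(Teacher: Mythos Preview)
Your plan for part~(1) is essentially the paper's proof: induction on $k$, Schur complement to get $X_S>B^{(1)}$, and order-preservation of the update maps to propagate the chain~\eqref{result1}. One slip: you wrote $B^{(k+1)}=B^{(1)}+A^{(1)}(Q^{(k)}-B^{(1)})^{-1}(A^{(1)})^H$, but the correct recursion (either from~\eqref{signalB} or from Proposition~\ref{trans} with $i=k$, $j=1$) is $B^{(k+1)}=B^{(1)}+A^{(1)}(Q^{(1)}-B^{(k)})^{-1}(A^{(1)})^H$. This does not break your argument, since the needed positivity $Q^{(1)}-B^{(k)}>0$ also follows from the inductive hypothesis $Q^{(1)}\geq X_S>B^{(k)}$; just keep the two inverses $(Q^{(k)}-B^{(1)})^{-1}$ and $(Q^{(1)}-B^{(k)})^{-1}$ straight.

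Part~(2), however, has a genuine gap. You claim that one step of SMWF turns $\mathcal{F}_+(Q^{(k)})=Q-A^Hf(Q^{(k)})^{-1}A$ into $Q^{(1)}-(A^{(1)})^H(Q^{(k)}-B^{(1)})^{-1}A^{(1)}=Q^{(k+1)}$. That identity is false: the SMWF computation in Section~2 (and Remark~\ref{Rmk}) shows that it is $\mathcal{F}_+^{(2)}$, not $\mathcal{F}_+$, that equals the map $X\mapsto Q^{(1)}-(A^{(1)})^H(X-B^{(1)})^{-1}A^{(1)}$. Consequently $Q^{(k+1)}=\mathcal{F}_+^{(2)}(Q^{(k)})$, while $\mathcal{F}_+(Q^{(k)})=\mathcal{F}_+^{(2k)}(Q)$ sits strictly between $Q^{(k)}=\mathcal{F}_+^{(2k-1)}(Q)$ and $Q^{(k+1)}=\mathcal{F}_+^{(2k+1)}(Q)$; it is not equal to $Q^{(k+1)}$ in general. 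So claim~(2) is \emph{not} the monotonicity $Q^{(k)}\geq Q^{(k+1)}$ already in part~(1). The paper instead starts from $Q^{(1)}=\mathcal{F}_+(Q)\geq\mathcal{F}_+(Q^{(1)})$ (since $Q\geq Q^{(1)}$ and $\mathcal{F}_+$ is order preserving) and applies the order-preserving operator $\mathcal{F}_+^{(2(k-1))}$ to both sides, obtaining $Q^{(k)}\geq\mathcal{F}_+(Q^{(k)})$ directly. You should replace your final paragraph with this argument.
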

\begin{proof}
Since part~(1) implies that $Q^{(i)}>B^{(j)}$ with any positive integers $i$ and $j$.
It shall be sufficient to proof the part~(1) and part~(2).
\par\noindent
\begin{itemize}
\item[1.] For part (1), $Q\geq X_S$ is evident. Otherwise, we will prove \eqref{result1} by induction. For $i = 1$ let $\Psi:=\bb f(X_S) & A \\ A^H & Q\eb$. From Lemma~\ref{Schur} we know that $\Psi>0$ since $Q-A^H f(X_S)^{-1} A\geq X_S>0$. Thus, $f(X_S)>A Q^{-1} A^H$. The result $X_S>B^{(1)}$ follows from the assumption that $f$ preserves positivity. On the other hand, we have
 \begin{align*}
 Q-Q^{(1)}&=A^H f(Q)^{-1} A\geq 0,\\
 Q^{(1)}-X_S&\geq A^H(f(X_S)^{-1}-f(Q)^{-1})A\geq 0,\\
 B^{(2)}-B^{(1)}&=A^{(1)}(Q^{(1)}-B^{(1)})^{-1} (A^{(1)})^H\geq 0,\\
 Q^{(1)}-Q^{(2)}&=(A^{(1)})^H (Q^{(1)}-B^{(1)})^{-1} A^{(1)}\geq 0.
\end{align*}
  Now assume that the statement \eqref{result1} is true for $i = s$. Then we want to prove that it holds in the case of $i = s+1$ as well. Let $\Psi_{s+1}:=\bb X_S-B^{(1)} & A^{(1)} \\ (A^{(1)})^H & Q^{(1)}-B^{(s)}\eb$. From Lemma~\ref{Schur} $\Psi_{s+1}>0$ since $Q^{(1)}-B^{(s)}-(A^{(1)})^H (X_S-B^{(1)})^{-1} A^{(1)}\geq X_S-B^{(s)}>0$. Thus, $X_S>B^{(1)}+A^{(1)}(Q^{(1)}-B^{(s)})^{-1}(A^{(1)})^H=B^{(s+1)}$. Note that
 \begin{align*}
 Q-Q^{(s+1)}&=Q-Q^{(s)}+(A^{(s)})^H (Q^{(1)}-B^{(s)})^{-1} A^{(s)}\geq 0,\\
 Q^{(s+1)}-X_S&\geq (A^{(s)})^H( (X_S-B^{(s)})^{-1}-(Q^{(1)}-B^{(s)})^{-1} )A^{(s)}\geq 0,\\
 B^{(s+2)}-B^{(s+1)}&=A^{(1)} ( (Q^{(1)}-B^{(s+1)})^{-1}-(Q^{(1)}-B^{(s)})^{-1} ) (A^{(1)})^H\geq 0,\\
 Q^{(s+1)}-Q^{(s+2)}&=(A^{(s+1)})^H (Q^{(1)}-B^{(s+1)})^{-1} A^{(s+1)}\geq 0.
\end{align*}
The induction process is now finished and thus the result of part (1) is followed.

\item[2.]
     Applying the matrix operator $\mathcal{F}_+$ with $2(k-1)$ times to both side
      $$Q^{(1)}=\mathcal{F}_+(Q)\geq \mathcal{F}_+(Q^{(1)}),$$
      we get $Q^{(k)}\geq \mathcal{F}_+(Q^{(k)})$ for any integer $k\geq 1$.
\end{itemize}
\end{proof}
For the dual Eq.~\eqref{eq:dualNME} with plus sign,
we show that the condition~\eqref{cond1} can be rewritten in an equivalent condition.
\begin{Proposition}\label{Pro}
Let $A$ be a nonsingular matrix. $X_S\in S$ in the condition \eqref{cond1} is equivalent to
    \begin{align*}
    Q-X_S\in\widetilde{S}:=\{\widetilde{X}_S>0;\widetilde{X}_0\leq\mathcal{G}_{+}(\widetilde{X}_S)\}.
    \end{align*}
     In other words, $S\neq\phi$ is equivalent to $\widetilde{S}\neq\phi$.
\end{Proposition}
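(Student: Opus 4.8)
The plan is to show that the affine involution $X\mapsto Q-X$ maps $S$ bijectively onto $\widetilde S$; since this map is its own inverse, it suffices to prove the implication $X_S\in S\Rightarrow Q-X_S\in\widetilde S$ together with its mirror image obtained by interchanging the roles of $A$ and $f(A)$ and of $X_S$ and $Q-X_S$, which is legitimate because $f(f(A))=A$ by (a1) and $f(A)$ is nonsingular by Proposition~\ref{pro0}(2). The asserted equivalence $S\neq\phi\Leftrightarrow\widetilde S\neq\phi$ then follows at once from this bijection.

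For the forward implication, put $\widetilde X_S:=Q-X_S$. I would first record that $f(X_S)>0$, which follows from $X_S>0_{n\times n}$, the order preservation of $f$ on $\mathbb H^{n\times n}$ (Proposition~\ref{pro0}(3)) and $f(0)=0$. The defining inequality $X_S\leq\mathcal F_+(X_S)$ rearranges to $A^Hf(X_S)^{-1}A\leq Q-X_S=\widetilde X_S$, and since $A$ is nonsingular the left-hand side is positive definite, so $\widetilde X_S>0$. That inequality is precisely the statement that the Schur complement of the $(1,1)$-block in $\Psi:=\bb f(X_S) & A \\ A^H & \widetilde X_S\eb$ is positive semidefinite, so Lemma~\ref{Schur}(2) yields $\Psi\geq0$ and hence $f(X_S)-A\widetilde X_S^{-1}A^H\geq0$, i.e.\ $A\widetilde X_S^{-1}A^H\leq f(X_S)$. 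Applying $f$ to this Hermitian inequality and using, in turn, order preservation, $f(f(X_S))=X_S$, the multiplication-order-preserving property, $f(A^H)=f(A)^H$ (Proposition~\ref{pro0}(4)) and $f(\widetilde X_S^{-1})=f(\widetilde X_S)^{-1}$ (Proposition~\ref{pro0}(2)), I obtain $f(A)f(\widetilde X_S)^{-1}f(A)^H\leq X_S=Q-\widetilde X_S$, that is $\widetilde X_S\leq\mathcal G_+(\widetilde X_S)$; hence $\widetilde X_S\in\widetilde S$.

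The reverse implication is carried out identically with $\mathcal G_+$, $f(A)$, $f(\widetilde X_S)$ playing the parts of $\mathcal F_+$, $A$, $f(X_S)$: from $\widetilde X_S\leq\mathcal G_+(\widetilde X_S)$ one obtains $f(A)f(\widetilde X_S)^{-1}f(A)^H\leq Q-\widetilde X_S=:X_S$, which forces $X_S>0$ because $f(A)$ is nonsingular; then $\widetilde\Psi:=\bb X_S & f(A) \\ f(A)^H & f(\widetilde X_S)\eb\geq0$ by Lemma~\ref{Schur}(2), so $f(A)^HX_S^{-1}f(A)\leq f(\widetilde X_S)$, and applying $f$ while invoking $f(f(A))=A$ and $f(f(\widetilde X_S))=\widetilde X_S$ gives $A^Hf(X_S)^{-1}A\leq Q-X_S$, i.e.\ $X_S\leq\mathcal F_+(X_S)$, so $X_S\in S$. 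I expect the only delicate points to be bookkeeping: tracking the order of factors when $f$ is pushed across products and adjoints — this is exactly where the transpose-like swap $A^H(\cdot)A\leftrightarrow f(A)(\cdot)f(A)^H$ is produced — and remembering that the Schur complements here are only positive semidefinite, not definite, since membership in $S$ and $\widetilde S$ permits equality.
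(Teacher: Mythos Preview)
Your proof is correct and follows essentially the same approach as the paper: both rely on the Schur complement characterization (Lemma~\ref{Schur}(2)) applied to the block matrix $\Psi=\bb f(X_S) & A\\ A^H & Q-X_S\eb$ to pass between the two Schur complements. Your version is in fact more complete than the paper's, which only sketches the forward direction and leaves the application of $f$ (to convert $A\widetilde X_S^{-1}A^H\leq f(X_S)$ into $\widetilde X_S\leq\mathcal G_+(\widetilde X_S)$) as ``straightforward''; you carry out both directions explicitly and track the adjoint and order bookkeeping carefully.
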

\begin{proof}
Note that $Q-X_s\geq A^H f(X_s)^{-1}A >0$ since $A$ is nonsingular.
Let $\Psi:=\bb f(X_S) & A \\ A^H & Q-X_S \eb$. Then, $\Psi\geq 0$ if $S\neq \phi$. It is straightforward to show that
$f(X_S)\geq A (Q-X_S)^{-1}A^H$ or $Q-X_S\in\widetilde{S}:=\{\widetilde{X}_S>0;\widetilde{X}_S\leq\mathcal{G}_{+}(\widetilde{X}_S)\}$.
\end{proof}
 In order to perform the main result we also need the following lemma,
\begin{Lemma}\label{coroM1}
Let $A$ be a nonsingular matrix.
Consider the dual equation~\eqref{eq:dualNME} with plus sign
\[
\widetilde{X}=\mathcal{G}_+(\widetilde{X})=Q-f(A)f(\widetilde{X})^{-1}f(A)^{H}.
\]
Suppose that the condition~\eqref{cond1} holds. Then, $\{\widetilde{\mathbb{X}}^{(k)}\}$ generated by the iterations~\eqref{signal} with initial matrices $\widetilde{\mathbb{X}}^{(1)}=(A^H f(Q)^{-1}f(A)^H,A^H  f(Q)^{-1}A,Q-f(A) f(Q)^{-1}f(A)^H)$ is well-defined and
\begin{align}\label{dual}
 A^{(k)}=(\widetilde{A}^{(k)})^H,\,\widetilde{Q}^{(k)}+B^{(k)}=\widetilde{B}^{(k)}+Q^{(k)}=Q,
\end{align}
for each positive integer $k$, where $(A^{(k)},B^{(k)},Q^{(k)})$ is defined in Lemma~\ref{Lem1}.
\end{Lemma}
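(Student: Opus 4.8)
The plan is to establish the three identities in \eqref{dual} simultaneously by induction on $k$, exploiting the structural symmetry between the iteration \eqref{signal} for $\{\mathbb{X}^{(k)}\}$ (with initial data \eqref{initial}) and the iteration for $\{\widetilde{\mathbb{X}}^{(k)}\}$ (with the stated initial data). First I would check the base case $k=1$: comparing the initial matrices $\mathbb{X}^{(1)}=(f(A)f(Q)^{-1}A,\,f(A)f(Q)^{-1}f(A)^H,\,Q-A^Hf(Q)^{-1}A)$ against $\widetilde{\mathbb{X}}^{(1)}=(A^Hf(Q)^{-1}f(A)^H,\,A^Hf(Q)^{-1}A,\,Q-f(A)f(Q)^{-1}f(A)^H)$, one reads off $A^{(1)}=(\widetilde{A}^{(1)})^H$ using that $f$ is adjoint-preserving (Proposition~\ref{pro0}(4)) together with $f(Q)^{-1}=f(Q^{-1})$ and the Hermitian property of $f(Q)^{-1}$; and $\widetilde{Q}^{(1)}+B^{(1)}=(Q-f(A)f(Q)^{-1}f(A)^H)+f(A)f(Q)^{-1}f(A)^H=Q$, while symmetrically $\widetilde{B}^{(1)}+Q^{(1)}=A^Hf(Q)^{-1}A+(Q-A^Hf(Q)^{-1}A)=Q$. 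Before the induction even starts I need the well-definedness of $\{\widetilde{\mathbb{X}}^{(k)}\}$; for this I would invoke Proposition~\ref{Pro}, which shows $S\neq\phi$ is equivalent to $\widetilde S\neq\phi$, and then apply Lemma~\ref{Lem1} to the dual equation $\widetilde X=\mathcal{G}_+(\widetilde X)$ — noting that $\mathcal{G}_+$ has exactly the form of $\mathcal{F}_+$ with $A$ replaced by $f(A)^H$ and the operator unchanged, so Lemma~\ref{Lem1} applies verbatim and guarantees no breakdown.

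The inductive step is the heart of the argument. Assume \eqref{dual} holds at level $k-1$, i.e.\ $A^{(k-1)}=(\widetilde A^{(k-1)})^H$, $\widetilde Q^{(k-1)}+B^{(k-1)}=Q$, and $\widetilde B^{(k-1)}+Q^{(k-1)}=Q$. The key observation is that the quantity $Q^{(1)}-B^{(k-1)}$ appearing in \eqref{signalA}--\eqref{signalQ} for $\mathbb{X}^{(k)}$ should match, under the induction hypothesis, the quantity $\widetilde Q^{(k-1)}-\widetilde B^{(1)}$ appearing in the dual iteration, and vice versa: indeed $Q^{(1)}-B^{(k-1)} = Q^{(1)} - (Q - \widetilde Q^{(k-1)}) = Q^{(1)}+\widetilde Q^{(k-1)}-Q$, and since $\widetilde B^{(1)} = Q - Q^{(1)}$ from the base case relation $\widetilde B^{(1)}+Q^{(1)}=Q$, this equals $\widetilde Q^{(k-1)}-\widetilde B^{(1)}$. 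So the ``resolvent'' factors in the two recursions are literally transposes (indeed equal, since these are Hermitian) of one another. Then:
\begin{itemize}
\item For $A^{(k)}$: from \eqref{signalA}, $A^{(k)} = A^{(1)}(Q^{(1)}-B^{(k-1)})^{-1}A^{(k-1)}$; taking $H$ and using $A^{(1)}=(\widetilde A^{(1)})^H$, $A^{(k-1)}=(\widetilde A^{(k-1)})^H$, and that the middle factor is Hermitian and equals $(\widetilde Q^{(k-1)}-\widetilde B^{(1)})^{-1}$, gives $(A^{(k)})^H = \widetilde A^{(k-1)}(\widetilde Q^{(k-1)}-\widetilde B^{(1)})^{-1}\widetilde A^{(1)}$, which by \eqref{signalA} applied to the dual iteration is exactly $\widetilde A^{(k)}$.
\item For $B^{(k)}$ and $\widetilde Q^{(k)}$: from \eqref{signalB}, $B^{(k)} = B^{(1)} + A^{(1)}(Q^{(1)}-B^{(k-1)})^{-1}(A^{(1)})^H$, and from \eqref{signalQ} applied to the dual, $\widetilde Q^{(k)} = \widetilde Q^{(k-1)} - (\widetilde A^{(k-1)})^H(\widetilde Q^{(k-1)}-\widetilde B^{(1)})^{-1}\widetilde A^{(k-1)}$. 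Adding and substituting the base-case identity $B^{(1)}+\widetilde Q^{(1)}=\ldots$ — more precisely, using $\widetilde Q^{(k-1)} = Q - B^{(k-1)}$ and that $(A^{(1)})^H = \widetilde A^{(1)}$, $(\widetilde A^{(k-1)})^H = A^{(k-1)}$ — one checks $B^{(k)} + \widetilde Q^{(k)}$ collapses to $Q$. The analogous computation with the roles of the two iterations swapped gives $\widetilde B^{(k)}+Q^{(k)}=Q$.
\end{itemize}

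The main obstacle, and the step requiring the most care, is bookkeeping the resolvent-factor matching: I must verify that $Q^{(1)}-B^{(k-1)}$ in the primal recursion and $\widetilde Q^{(k-1)}-\widetilde B^{(1)}$ in the dual recursion coincide (and likewise with the superscripts exchanged), because the two recursions are \emph{not} symmetric term-by-term — one uses $B^{(k-1)}$ with a fixed $Q^{(1)}$, the other uses $\widetilde Q^{(k-1)}$ with a fixed $\widetilde B^{(1)}$. Establishing this requires feeding the induction hypothesis $\widetilde Q^{(k-1)}+B^{(k-1)}=Q$ together with the base-case relation $Q^{(1)}+\widetilde B^{(1)}=Q$ (equivalently the alternative recursion \eqref{fixQ} from Remark~\ref{Rmk}, which says $Q^{(k)}$ itself satisfies a fixed-point-type recursion in $Q^{(k-1)}$ and $B^{(1)}$, mirrored on the dual side); once that identification is in hand, the three identities propagate almost mechanically. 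I would also keep in mind that one could alternatively route the whole argument through Proposition~\ref{trans}'s group-like law applied to the enlarged triple, but the direct induction above is cleaner and self-contained. A minor point to not overlook: all inverses appearing are legitimate precisely because Lemma~\ref{Lem1} (applied to both the primal and, via Proposition~\ref{Pro}, the dual equation) guarantees $Q^{(i)}>B^{(j)}$ and the dual analogue, so no breakdown occurs and every $\Delta$-type factor is a genuine inverse of a positive definite matrix.
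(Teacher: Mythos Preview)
Your overall plan---induction on $k$, checking the base case from the explicit initial data, establishing well-definedness of $\{\widetilde{\mathbb{X}}^{(k)}\}$ via Proposition~\ref{Pro} and Lemma~\ref{Lem1}, and then matching resolvent factors---is the same as the paper's. But there is a real gap in the inductive step, and it is precisely where you dismiss Proposition~\ref{trans} as an optional alternative.

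Concretely: from \eqref{signalA} and your identity $Q^{(1)}-B^{(k-1)}=\widetilde{Q}^{(k-1)}-\widetilde{B}^{(1)}$ you correctly arrive at
\[
(A^{(k)})^H=\widetilde{A}^{(k-1)}\bigl(\widetilde{Q}^{(k-1)}-\widetilde{B}^{(1)}\bigr)^{-1}\widetilde{A}^{(1)}.
\]
You then claim that ``by \eqref{signalA} applied to the dual iteration'' this equals $\widetilde{A}^{(k)}$. It does not follow from \eqref{signalA}: the recursion \eqref{signalA} for the dual triple reads $\widetilde{A}^{(k)}=\widetilde{A}^{(1)}(\widetilde{Q}^{(1)}-\widetilde{B}^{(k-1)})^{-1}\widetilde{A}^{(k-1)}$, with the indices $1$ and $k-1$ in the \emph{opposite} slots. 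Equating these two expressions is exactly the content of the group-like law \eqref{tran1a} (compare $i=1$, $j=k-1$ against $i=k-1$, $j=1$). The same issue recurs in your $B^{(k)}+\widetilde{Q}^{(k)}=Q$ calculation: the formula you wrote for $\widetilde{Q}^{(k)}$, with resolvent $(\widetilde{Q}^{(k-1)}-\widetilde{B}^{(1)})^{-1}$, is not \eqref{signalQ}; and if one inserts the genuine \eqref{signalQ} for the dual and uses the second resolvent match $\widetilde{Q}^{(1)}-\widetilde{B}^{(k-1)}=Q^{(k-1)}-B^{(1)}$, the collapse to $Q$ reduces to $B^{(k)}=B^{(k-1)}+A^{(k-1)}(Q^{(k-1)}-B^{(1)})^{-1}(A^{(k-1)})^H$, which is \eqref{tran1b}, not \eqref{signalB}.

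In short, Proposition~\ref{trans} is not an alternative route---it is the missing ingredient that lets the indices swap, and the paper invokes it explicitly (``Together with Proposition~\ref{trans} we have \ldots'') at exactly this point of the induction. Your argument becomes correct once you cite \eqref{tran1a}--\eqref{tran1b} (or equivalently the alternative forms in Remark~\ref{Rmk} part~1, which themselves rest on Proposition~\ref{trans}); the claim that the direct induction is ``self-contained'' without it is where the proof breaks.
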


\begin{proof}
 We will prove \eqref{dual} by induction. For $k=1$ is trivial that \eqref{dual} holds. Now assume that the statement is true for a positive integer $k=s-1$. Together with Proposition~\ref{trans} we have
\begin{align*}
(\widetilde{A}^{(s)})^H&=A^{(s-1)}(Q^{(s-1)}-B^{(1)})^{-1}A^{(1)}={A}^{(s)},\\
B^{(s)}+\widetilde{Q}^{(s)}&=B^{(s)}+(Q-B^{(s-1)})+A^{(s-1)}(Q^{(1)}-B^{(s-1)})^{-1}(A^{(s-1)})^H=Q,\\
Q^{(s)}+\widetilde{B}^{(s)}&=Q^{(s)}+(Q-Q^{(1)})+(A^{(1)})^H(Q^{(s-1)}-B^{(1)})^{-1}A^{(1)}=Q.
\end{align*}
    This shows that~\eqref{dual} is also true for integers $k=s$. By induction principle, \eqref{dual} is true for all positive integers $k$.
\end{proof}
We can now propose our main result for Eq.~\eqref{eq:NMEP} in this subsection.
\begin{Theorem}\label{main1}
Suppose that the operator $f$ is a continuous map from $\mathbb{P}^{n\times n}$ into itself and the assumption~\eqref{cond1} is satisfied. Then the following statements hold,
\begin{itemize}
\item[(1)]$Q^{(\infty)}:=\lim\limits_{\ell\rightarrow\infty}Q^{(\ell)}$ is the maximal positive definite solution of Eq.~\eqref{eq:NMEP}.
\item[(2)]$Q-B^{(\infty)}:=Q-\lim\limits_{\ell\rightarrow\infty}B^{(\ell)}$ is the maximal positive definite solution of dual Eq.~\eqref{eq:dualNME} with plus sign if $A$ is nonsingular.
 \item[(3)]$B^{(\infty)}$ is the minimal positive definite solution of Eq.~\eqref{eq:NMEP} if $A$ is nonsingular.
\end{itemize}
\end{Theorem}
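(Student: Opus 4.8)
The plan is to exploit the monotone-convergence structure already assembled in Lemma~\ref{Lem1}, together with the duality dictionary of Lemma~\ref{coroM1} and the group-like law of Proposition~\ref{trans}. First I would observe that by part~(1) of Lemma~\ref{Lem1} the sequence $\{Q^{(\ell)}\}$ is monotonically decreasing and bounded below by $X_S>0$, so the limit $Q^{(\infty)}$ exists and is positive semidefinite; in fact $Q^{(\infty)}\geq X_S>0$, so it is positive definite. Passing to the limit in the fixed-point recursion \eqref{fixQ}, namely $Q^{(k)}=Q^{(1)}-(A^{(1)})^H(Q^{(k-1)}-B^{(1)})^{-1}A^{(1)}$ — here continuity of inversion on $\mathbb{P}^{n\times n}$ is used, which is why the hypothesis ``$f$ continuous from $\mathbb{P}^{n\times n}$ into itself'' is needed to guarantee $A^{(1)},B^{(1)}$ make sense and the limit stays in the open cone — shows $Q^{(\infty)}$ solves the standard equation \eqref{NMEP}. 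Unwinding the one-step SMWF transformation \eqref{initial} then shows $Q^{(\infty)}$ solves \eqref{eq:NMEP} itself. Analogously, part~(1) gives that $\{B^{(\ell)}\}$ is increasing and bounded above by $X_S$, so $B^{(\infty)}$ exists and is positive semidefinite with $B^{(\infty)}<X_S$; using Remark~\ref{Rmk} (the representation $B^{(k)}=\mathcal{G}_+^{(2)}(B^{(k-1)})$) and Lemma~\ref{coroM1} (which identifies $Q-B^{(k)}=\widetilde{Q}^{(k)}$ with the corresponding iterate for the dual equation), passing to the limit shows $Q-B^{(\infty)}$ solves the dual equation \eqref{eq:dualNME} with plus sign, and hence — again by the SMWF correspondence between \eqref{eq:NMEP} and its dual, valid since $A$ is nonsingular — that $B^{(\infty)}$ is a positive definite solution of \eqref{eq:NMEP}.

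Next I would prove the extremal (maximal/minimal) claims. For maximality of $Q^{(\infty)}$: let $X>0$ be any Hermitian solution of \eqref{eq:NMEP}. Then $X=\mathcal{F}_+(X)$, so trivially $X\leq\mathcal{F}_+(X)$, i.e.\ $X\in S$. Now I would show by induction on $\ell$ that $X\leq Q^{(\ell)}$ for all $\ell$: the base case is $X\leq Q$ (since $Q-X=A^Hf(X)^{-1}A\geq 0$), and the inductive step uses that $\mathcal{F}_+$ is order preserving (stated right after \eqref{eq:NME1}) applied twice, since $Q^{(\ell)}=\mathcal{F}_+^{(2)}(Q^{(\ell-1)})$ and $X=\mathcal{F}_+^{(2)}(X)$; more directly, one runs the same Schur-complement argument as in the proof of Lemma~\ref{Lem1}(1) with $X$ in place of $X_S$, which already shows $X\leq Q^{(\ell)}$ and $X>B^{(\ell)}$ for every $\ell$. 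Letting $\ell\to\infty$ gives $X\leq Q^{(\infty)}$, proving (1). The same induction gives $B^{(\infty)}\leq X$ for every positive definite solution $X$, proving minimality in (3). For (2): transfer the maximality of $Q^{(\infty)}$ through the $X\mapsto Q-X$ correspondence of Proposition~\ref{Pro} and Lemma~\ref{coroM1}; if $\widetilde{X}>0$ solves the dual equation, then $Q-\widetilde{X}$ is a solution of \eqref{eq:NMEP} lying in $S$, hence $Q-\widetilde{X}\leq Q^{(\infty)}$, i.e.\ $\widetilde{X}\geq Q-B^{(\infty)}$; wait — one must be careful here, since the limit of $Q-B^{(\ell)}$ should be matched to the \emph{dual} iteration's $Q$-sequence, so I would instead directly invoke Lemma~\ref{coroM1}: $Q-B^{(\infty)}=\lim(Q-B^{(\ell)})=\lim\widetilde{Q}^{(\ell)}=\widetilde{Q}^{(\infty)}$, and then maximality of $\widetilde{Q}^{(\infty)}$ for the dual equation follows from part~(1) applied to the dual equation (whose hypothesis $\widetilde{S}\neq\phi$ is equivalent to \eqref{cond1} by Proposition~\ref{Pro}).

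The main obstacle I anticipate is the bookkeeping in showing that each limit is genuinely \emph{positive definite} (not merely semidefinite) and that no breakdown occurs in the limit — i.e.\ that $Q^{(\infty)}-B^{(1)}$, and more generally all the matrices being inverted, remain nonsingular so that the limiting equations are meaningful. This is handled by the strict inequality $Q^{(\ell)}\geq X_S>B^{(\ell)}$ from Lemma~\ref{Lem1}(1), which persists in the limit as $Q^{(\infty)}\geq X_S>0$ and $B^{(\infty)}<X_S$, but one must state this carefully and note where continuity of $f$ on $\mathbb{P}^{n\times n}$ enters (to keep $f(Q)^{-1}$ and the iterates inside the cone). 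A secondary, more routine point is verifying that the SMWF transformation \eqref{initial} is genuinely reversible — that $X$ solves \eqref{eq:NMEP} if and only if it solves \eqref{NMEP} — which is exactly the content of Lemma~\ref{Schur}(1) applied in both directions; I would cite it rather than recompute. Everything else is the monotone convergence theorem for bounded operator-monotone sequences plus the order-preservation of $\mathcal{F}_+$ and $-\mathcal{F}_-$ noted after \eqref{eq:NME1}.
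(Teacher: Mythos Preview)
Your overall architecture for parts~(2) and~(3) matches the paper's and is sound, but there is a genuine gap in part~(1). Passing to the limit in \eqref{fixQ} gives only
\[
Q^{(\infty)}=Q^{(1)}-(A^{(1)})^H(Q^{(\infty)}-B^{(1)})^{-1}A^{(1)},
\]
which is the statement that $Q^{(\infty)}$ solves the \emph{standard} equation \eqref{NMEP}. But \eqref{NMEP} is equivalent (via SMWF) to $X=\mathcal{F}_+^{(2)}(X)$, \emph{not} to $X=\mathcal{F}_+(X)$. Your claim that ``the SMWF transformation \eqref{initial} is genuinely reversible --- that $X$ solves \eqref{eq:NMEP} if and only if it solves \eqref{NMEP}'' is false: the forward implication holds (a fixed point of $\mathcal{F}_+$ is a fixed point of $\mathcal{F}_+^{(2)}$), but the converse does not, since $\mathcal{F}_+$ could in principle have a $2$-cycle. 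Lemma~\ref{Schur}(1) is an identity between two expressions for an inverse; it does not collapse fixed points of the square to fixed points of the map.

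The paper closes this gap using part~(2) of Lemma~\ref{Lem1}, which you never invoke: from $Q^{(k)}\geq\mathcal{F}_+(Q^{(k)})$ one passes to the limit to get $Q^{(\infty)}\geq\mathcal{F}_+(Q^{(\infty)})$, and since $\mathcal{F}_+$ is order preserving this yields $\mathcal{F}_+(Q^{(\infty)})\geq\mathcal{F}_+^{(2)}(Q^{(\infty)})=Q^{(\infty)}$, the last equality being exactly what your limit argument produces. The two inequalities sandwich to $Q^{(\infty)}=\mathcal{F}_+(Q^{(\infty)})$. Once this is fixed, your maximality argument (any solution $X$ lies in $S$, hence $X\leq Q^{(\ell)}$ for all $\ell$ by Lemma~\ref{Lem1}(1)) and your treatment of parts~(2) and~(3) via Proposition~\ref{Pro} and Lemma~\ref{coroM1} are correct and essentially coincide with the paper's.
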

\begin{proof}
\par\noindent
\begin{itemize}
\item[1.]By taking limit as $k\rightarrow\infty$ on both side of part (2) of Lemma~\ref{Lem1} we obtain $\mathcal{F}^{(2)}_+(Q^{(\infty)})=Q^{(\infty)}\geq \mathcal{F}_+(Q^{(\infty)})\geq \mathcal{F}^{(2)}_+(Q^{(\infty)})$. That is,
    $Q^{(\infty)}=\mathcal{F}_+(Q^{(\infty)})$.
\item[2.]
From Lemma~\ref{Lem1} and Proposition~\ref{Pro} it follows that the dual equation~\eqref{eq:dualNME} with plus sign has maximal Hermitian positive definite solution $\widetilde{Q}^{(\infty)}$. Next,
the result immediately follows from the foregoing Lemma~\ref{coroM1}.
\item[3.]When $A$ is nonsingular,  it is easy to check that $\widetilde{X}=\mathcal{G}_+(\widetilde{X})$ is equivalent to $X=\mathcal{F}_+(X)$, where $\widetilde{X}:=Q-X$. From part~(2) it follows that
    \[
Q-\lim\limits_{\ell\rightarrow\infty}B^{(\ell)}=\widetilde{X}_\infty\geq \widetilde{X}=Q-X.
    \]
This completes the proof of part~(3).
\end{itemize}
\end{proof}


\subsection{The solvability of Eq.~\eqref{eq:NMEM}}
As aforementioned above, Eq.~\eqref{eq:NMEM} can be also transformed into the standard nonlinear matrix equation~\eqref{NMEP} with coefficient matrices~\eqref{initial1}. Let the sequence of matrices $\{\mathbb{X}^{(k)}\}$ be generated by the iterations~\eqref{signal} with initial matrices $\mathbb{X}^{(1)}$ in \eqref{initial1}. Analogously to the foregoing result we can verify the following consequences.
\begin{Lemma}\label{Lem2}
\par\noindent
\begin{itemize}
 \item[a.]
 Let the set of Hermitian matrix solutions of the nonlinear matrix equation~\eqref{eq:NMEM} be
    \begin{align}\label{cond2}
     K:=\{X_K\in\mathbb{H}^{n\times n};X_K=\mathcal{F}_{-}(X_K)\}.
    \end{align}
Then, $K\cap \mathbb{P}^{n\times n}$ is nonempty if $f$ is a continuous map from $\mathbb{P}^{n\times n}$ into itself. That is,  Eq.~\eqref{eq:NMEM} always has a positive definite solution $X$. Furthermore, $K$ admits a negative definite solution if $A$ is nonsingular.
\item[b.]
 For any integer $k\geq 1$, the sequence $\{\mathbb{X}^{(k)}\}$ is well-defined. Furthermore, we have two following properties,
 \begin{itemize}
 \item[(1)]
 $X_K$ is a lower bound of $\{Q^{(k)}\}$ and is an upper bound of $\{B^{(k)}\}$. Furthermore,
 $$ B^{(k)}\leq B^{(k+1)}\leq 0< Q\leq X_K \leq Q^{(k+1)}\leq Q^{(k)},$$
where $X_K\in K\cap \mathbb{P}^{n\times n}$.
 \item[(2)]
Suppose that $A$ is nonsingular. Let $f$ be a continuous map from $\mathbb{P}^{n\times n}$ into itself, then  $Q^{(\infty)}:=\lim\limits_{\ell\rightarrow\infty}Q^{(\ell)}$ is a unique positive definite solution of Eq.~\eqref{eq:NMEM}.
\end{itemize}
\end{itemize}
\end{Lemma}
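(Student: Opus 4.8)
The plan is to treat the three assertions separately, each time reducing to a fixed-point problem for the order-reversing operator $\mathcal{F}_{-}$ of \eqref{eq:NME1} and using Proposition~\ref{pro0} together with the Schur-complement and SMWF facts of Lemma~\ref{Schur}. For part (a) I would first record that $\mathcal{F}_{-}(X)=Q+A^{H}f(X)^{-1}A\ge Q>0$ for every $X>0$ and that $\mathcal{F}_{-}$ is antitone on $\mathbb{P}^{n\times n}$ (if $X\ge Y>0$ then $f(X)\ge f(Y)$, so $f(X)^{-1}\le f(Y)^{-1}$). Hence $\mathcal{F}_{-}$ carries the order interval $[Q,\mathcal{F}_{-}(Q)]$ --- compact and convex in $\mathbb{H}^{n\times n}$, and on which $f$ is continuous and positive --- into itself, since $\mathcal{F}_{-}(X)\ge Q$ always while $\mathcal{F}_{-}(X)\le\mathcal{F}_{-}(Q)$ for $X\ge Q$ by antitonicity; Brouwer's fixed-point theorem then yields $X_K\in[Q,\mathcal{F}_{-}(Q)]$ with $\mathcal{F}_{-}(X_K)=X_K$, proving $K\cap\mathbb{P}^{n\times n}$ nonempty. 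For the negative definite solution when $A$ is nonsingular, I would substitute $V:=Q-X$, which by the period-$2$, additive and multiplicative properties of $f$ (exactly as in the computation preceding Proposition~\ref{Pro}, with the sign reversed) converts $X=\mathcal{F}_{-}(X)$ into $V=\mathcal{G}_{+}(V)$; the latter equation has a positive definite solution by the same Brouwer argument on $[Q,\mathcal{G}_{+}(Q)]$, and since nonsingularity of $A$ forces $\mathcal{G}_{+}(V)>Q$, one gets $V>Q$, so $X=Q-V<0$ is the required member of $K$.

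For part (b)(1) I would run an induction on $k$ parallel to Lemma~\ref{Lem1}(1), with the opposite sign pattern. The base step is read off from \eqref{initial1}: $B^{(1)}=-f(A)f(Q)^{-1}f(A)^{H}\le0$, $Q^{(1)}=Q+A^{H}f(Q)^{-1}A\ge Q$, and the Schur complement of $\bb f(Q) & A \\ A^{H} & Q^{(1)}\eb$ equals $Q>0$, which gives $A(Q^{(1)})^{-1}A^{H}<f(Q)$; feeding this into \eqref{signalB} propagates $B^{(k)}\le0$, while $Q^{(1)}-X_K=A^{H}(f(Q)^{-1}-f(X_K)^{-1})A\ge0$ because $X_K=\mathcal{F}_{-}(X_K)\ge Q$. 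For the inductive step I would use the closed form \eqref{fixQ} for $Q^{(k)}$ and the recursion \eqref{signalB} for $B^{(k)}$, together with the fact --- immediate since the SMWF transformation of \eqref{eq:NME} sends solutions to solutions --- that $X_K$ also solves \eqref{NMEP}; then $Q^{(k)}-X_K$, $Q^{(k)}-Q^{(k+1)}$, $B^{(k+1)}-B^{(k)}$ and $X_K-B^{(k)}$ are all $\ge0$ by the order-reversal of inversion, exactly as in Lemma~\ref{Lem1}, and $B^{(k)}\le X_K$ is in fact automatic once $B^{(k)}\le0<Q\le X_K$ is known. Since every $Q^{(1)}-B^{(k)}>0$, the recursion \eqref{signal} never breaks down, giving part (b) item (b).

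For part (b)(2), by (b)(1) the sequence $\{Q^{(k)}\}$ decreases and is bounded below by the $X_K$ of part (a), so $Q^{(\infty)}:=\lim_{k}Q^{(k)}\ge X_K>0$ exists; letting $k\to\infty$ in \eqref{fixQ} (continuity of $f$ on $\mathbb{P}^{n\times n}$; all iterates stay $\ge X_K$) shows $Q^{(\infty)}$ solves \eqref{NMEP}, i.e. $\mathcal{F}_{-}^{(2)}(Q^{(\infty)})=Q^{(\infty)}$ by Remark~\ref{Rmk}(2). The real content is that $Q^{(\infty)}$ is actually a fixed point of $\mathcal{F}_{-}$ and that such a fixed point is unique; both use nonsingularity of $A$ through one observation. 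If $X=\mathcal{F}_{-}(X)>0$ then $X-A^{H}f(X)^{-1}A=Q>0$ strictly, so in the spectral norm $\|f(X)^{-1/2}AX^{-1/2}\|<1$; applying $f$ to the equation gives $f(X)-f(A)^{H}X^{-1}f(A)=f(Q)>0$, so $\|X^{-1/2}f(A)f(X)^{-1/2}\|<1$ as well. Consequently the Fr\'echet derivative of $\mathcal{F}_{-}^{(2)}$ at such an $X$ is the positive linear operator $H\mapsto\mathcal{M}H\mathcal{M}^{H}$ with $\mathcal{M}=A^{H}f(X)^{-1}f(A)^{H}X^{-1}$, whose spectral radius $\rho(\mathcal{M})^{2}$ is $<1$; so any positive definite fixed point of $\mathcal{F}_{-}^{(2)}$ is isolated and attracting. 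Since the even iterates $\mathcal{F}_{-}^{(2k)}(Q)$ increase to a limit $P^{(\infty)}$ with $P^{(\infty)}\le X_K\le Q^{(\infty)}$ (recall $Q^{(k)}=\mathcal{F}_{-}^{(2k-1)}(Q)$ by \eqref{initial1} and Remark~\ref{Rmk}(2)), a monotone-dynamics argument squeezes $P^{(\infty)}=X_K=Q^{(\infty)}$, so $Q^{(\infty)}\in K\cap\mathbb{P}^{n\times n}$; and uniqueness follows because the difference $\delta=X_1-X_2$ of two positive definite solutions satisfies (after using additivity of $f$ and applying $f$ once) the homogeneous Sylvester-type equation $\delta=\mathcal{M}_1\delta\mathcal{N}_2$ with $\rho(\mathcal{M}_1),\rho(\mathcal{N}_2)<1$, forcing $\delta=0$.

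The step I expect to be the main obstacle is precisely the collapse of the two-periodic bracket in (b)(2): a priori $Q^{(\infty)}$ only solves the squared equation $\mathcal{F}_{-}^{(2)}(X)=X$ (equivalently \eqref{NMEP}), so one must rule out nontrivial $2$-cycles of $\mathcal{F}_{-}$ among positive definite matrices, and this is exactly where nonsingularity of $A$ is indispensable --- it lets one convert the pair of fixed-point relations, via $A^{-1}$, into a homogeneous linear matrix equation for the gap $Q^{(\infty)}-\mathcal{F}_{-}(Q^{(\infty)})$ whose associated operator must then be certified to have spectral radius $<1$ from the strict inequality $X-A^{H}f(X)^{-1}A=Q>0$. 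Everything else is bookkeeping with Schur complements, the order-reversal of inversion, and the group-like law of Proposition~\ref{trans}.
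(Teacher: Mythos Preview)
Your treatment of part~(a), first half, and of part~(b)(1) is essentially the paper's: Schauder/Brouwer on the order interval $[Q,\mathcal{F}_{-}(Q)]$, followed by induction on the recursion~\eqref{signal} using Schur complements and the order reversal of inversion. Two points need correction or comparison.

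\textbf{Sign slip in part~(a).} The substitution $V=Q-X$ does \emph{not} convert $X=\mathcal{F}_{-}(X)$ into $V=\mathcal{G}_{+}(V)$; it yields $V=\mathcal{G}_{-}(V)=Q+f(A)f(V)^{-1}f(A)^{H}$ (carry out the computation as in Theorem~\ref{main1}(3) with the sign reversed). As you wrote it, the interval $[Q,\mathcal{G}_{+}(Q)]$ is empty since $\mathcal{G}_{+}(Q)=Q-f(A)f(Q)^{-1}f(A)^{H}\le Q$, so your Brouwer step cannot run, and the inequality ``$\mathcal{G}_{+}(V)>Q$'' that you invoke is false for $V>0$. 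With the correct $\mathcal{G}_{-}$ everything works: the dual equation has a positive definite solution $\widetilde{X}>Q$ by the same fixed-point argument on $[Q,\mathcal{G}_{-}(Q)]$, and then $X=Q-\widetilde{X}<0$ lies in $K$. This is exactly the paper's route.

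\textbf{Part~(b)(2): a genuinely different route, but with a gap as stated.} The paper does not argue via contraction. It rewrites $X=\mathcal{F}_{-}^{(2)}(X)$ as the discrete algebraic Riccati equation
\[
X=Q+\widehat{A}^{H}X\widehat{A}-\widehat{A}^{H}X(X-B^{(1)})^{-1}X\widehat{A},\qquad \widehat{A}=f(A)^{-H}A,
\]
notes that $(\widehat{A},I_n)$ is stabilizable and $(\widehat{A},Q_1)$ detectable (writing $Q=Q_1^{H}Q_1$), and cites the standard uniqueness of the positive definite DARE solution; since the $X_K$ of part~(a) is one such solution, $Q^{(\infty)}=X_K=\mathcal{F}_{-}(Q^{(\infty)})$. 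Your approach is more self-contained, but as written it does not close the two-periodic bracket: knowing that the fixed point $X_K$ of $\mathcal{F}_{-}$ is locally attracting for $\mathcal{F}_{-}^{(2)}$ does not by itself force $P^{(\infty)}=X_K=Q^{(\infty)}$, and your Sylvester identity $\delta=\mathcal{M}_1\delta\mathcal{N}_2$ is stated only for fixed points of $\mathcal{F}_{-}$, which is precisely what remains to be shown for $Q^{(\infty)}$. The repair is to run the same Sylvester computation one level up: if $X_1,X_2>0$ satisfy $\mathcal{F}_{-}^{(2)}(X_i)=X_i$ and $Y_i:=\mathcal{F}_{-}(X_i)$, then $X_1-X_2=M(X_1-X_2)N$ with $M=A^{H}f(Y_1)^{-1}f(A)^{H}X_1^{-1}$ and $N=X_2^{-1}f(A)f(Y_2)^{-1}A$, and the four strict inequalities $X_i>A^{H}f(Y_i)^{-1}A$ and $f(Y_i)>f(A)^{H}X_i^{-1}f(A)$ (the latter pair from applying $f$ to $Y_i=Q+A^{H}f(X_i)^{-1}A$) give $\rho(M)\rho(N)<1$. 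This yields uniqueness of positive definite fixed points of $\mathcal{F}_{-}^{(2)}$ directly, hence $Q^{(\infty)}=X_K$, after which your uniqueness-in-$\mathcal{F}_{-}$ argument finishes the job.
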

\begin{proof}
 \par\noindent
 \begin{itemize}
  \item[a.]
First given a positive definite $X$ such that $Q \leq X \leq Q^{(1)}$.
It is clear that $Q\leq \mathcal{F}_-(X)=Q+A^H f(X)^{-1} A \leq Q+A^H f(Q)^{-1} A=Q^{(1)}$. The existence of the positive solution of Eq.~\eqref{eq:NMEM} follows immediately Schauder's fixed point theorem under the assumption that $f$ is continuous on $\mathbb{P}^{n\times n}$. Let $A$ be a nonsingular matrix. Then, $\widetilde{X}:=Q-X=-A^H f(X)^{-1} A$ is nonsingular and Eq.~\eqref{eq:NMEM} is equivalent to $\widetilde{X}=\mathcal{G}_-(\widetilde{X})$. Thus, there exists a negative definite solution $Y$ of Eq.~\eqref{eq:NMEM} such that $-A^H f(Q)^{-1} A\leq Y < 0$.
 \item[b.]
Since part~(1) implies that $Q^{(i)}>0\geq B^{(j)}$ with any positive integers $i$ and $j$.
It shall be sufficient to proof the part~(1)--part~(2).
\begin{itemize}
\item[1.] For part (1), $B^{(1)}\leq 0$ is evident. 
 Otherwise, we will prove this by induction. It follows that
 \begin{align*}
 Q-Q^{(1)}&=-A^H f(Q)^{-1} A\leq 0,\\
 Q^{(1)}-X_K&=A^H(f(Q)^{-1}-f(X_K)^{-1})A\geq 0,\\
 B^{(2)}-B^{(1)}&=A^{(1)}(Q^{(1)}-B^{(1)})^{-1} (A^{(1)})^H\geq 0,\\
 Q^{(1)}-Q^{(2)}&=(A^{(1)})^H (Q^{(1)}-B^{(1)})^{-1} A^{(1)}\geq0.
\end{align*}
  Now assume that the inequality in part~(1) is true for $i = s$, then we want to prove that it also holds for $i = s+1$. Note that
 \begin{align*}
 B^{(s+1)}&=-f(A)(f(Q)+A(Q-B^{(s)})^{-1}A^H)^{-1} f(A)^H\leq 0,\\
 Q^{(s+1)}-X_K&= (A^{(s)})^H( (X_K-B^{(s)})^{-1}-(Q^{(1)}-B^{(s)})^{-1} )A^{(s)}\geq 0,\\
 B^{(s+2)}-B^{(s+1)}&=A^{(1)} ( (Q^{(1)}-B^{(s+1)})^{-1}-(Q^{(1)}-B^{(s)})^{-1} ) (A^{(1)})^H\geq 0,\\
 Q^{(s+1)}-Q^{(s+2)}&=(A^{(s+1)})^H (Q^{(1)}-B^{(s+1)})^{-1} A^{(s+1)}\geq 0.
\end{align*}
So part~(1) also holds for $i = s + 1$, which we have shown.

\item[2.]
First, the matrix equation $X=\mathcal{F}_-^{(2)}(X)$ is equivalent to the discrete algebraic Riccati equation
     \[
     X=Q+\widehat{A}^H X \widehat{A}-\widehat{A}^H X (X-B^{(1)})^{-1}X\widehat{A},
     \]
    where $\widehat{A}=f(A)^{-H}A$ is nonsingular. Let $Q=Q_1^HQ_1$ for a positive definite matrix $Q_1$. It is clear that $(\widehat{A},I_n)$ is stabilizable and $(\widehat{A},Q_1)$ is detectable.
    From part~(1) and \cite{Bini2012} or \cite[Theorem 5.6]{Sayed01}, $Q^{(\infty)}$ is the unique positive definite solution of the equation $X=\mathcal{F}_-^{(2)}(X)$. Thus, $Q^{(\infty)}=\mathcal{F}_-(Q^{(\infty)})>0$ is the unique positive definite solution of the equation of Eq.~\eqref{eq:NMEM} since
    Eq.~\eqref{eq:NMEM} always has a positive definite solution.
\end{itemize}
\end{itemize}
\end{proof}
The proofs of the following results follow in a similar manner as the proofs of Lemma~\ref{coroM1}, and Theorem~\ref{main1}. We omit it here.

\begin{Lemma}\label{coroM2}
Consider the dual equation~\eqref{eq:dualNME} with minor sign
\[
\widetilde{X}=\mathcal{G}_-(\widetilde{X})=Q+f(A)f(\widetilde{X})^{-1}f(A)^{H}.
\]
Then, the sequence $\{\widetilde{\mathbb{X}}^{(k)}\}$ is well-defined by the iterations~\eqref{signal} with initial matrices
$$\widetilde{\mathbb{X}}^{(1)}=(A^H f(Q)^{-1}f(A)^H,-A^H f(Q)^{-1}A,Q+f(A) f(Q)^{-1}f(A)^H)$$
 and
\begin{align*}
 A^{(k)}=(\widetilde{A}^{(k)})^H,\,\widetilde{Q}^{(k)}+B^{(k)}=\widetilde{B}^{(k)}+Q^{(k)}=Q,
\end{align*}
for each positive integer $k$, where $(A^{(k)},B^{(k)},Q^{(k)})$ is defined in Lemma~\ref{Lem2}.
\end{Lemma}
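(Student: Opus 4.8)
The plan is to imitate the proof of Lemma~\ref{coroM1} line by line, the only changes being that the plus-sign initial data~\eqref{initial} is replaced by the minus-sign data~\eqref{initial1} and that the non-tilde sequence $\{\mathbb{X}^{(k)}\}$ is now the one supplied by Lemma~\ref{Lem2}. Since the two desired identities and the assertion ``$\widetilde{\mathbb{X}}^{(k)}$ is well-defined'' feed into one another, I would run a single induction on $k$ that proves all three statements simultaneously: (i) $\widetilde{\mathbb{X}}^{(k)}$ is well-defined, (ii) $(\widetilde{A}^{(k)})^{H}=A^{(k)}$, and (iii) $\widetilde{Q}^{(k)}+B^{(k)}=\widetilde{B}^{(k)}+Q^{(k)}=Q$.

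For the base case $k=1$: since $Q$ is Hermitian, $f(Q)$ is Hermitian by Proposition~\ref{pro0}(4), so $(\widetilde{A}^{(1)})^{H}=\bigl(A^{H}f(Q)^{-1}f(A)^{H}\bigr)^{H}=f(A)f(Q)^{-1}A=A^{(1)}$; and, reading off $(A^{(1)},B^{(1)},Q^{(1)})$ from~\eqref{initial1}, one checks directly that $\widetilde{Q}^{(1)}+B^{(1)}=\bigl(Q+f(A)f(Q)^{-1}f(A)^{H}\bigr)+\bigl(-f(A)f(Q)^{-1}f(A)^{H}\bigr)=Q$ and $\widetilde{B}^{(1)}+Q^{(1)}=\bigl(-A^{H}f(Q)^{-1}A\bigr)+\bigl(Q+A^{H}f(Q)^{-1}A\bigr)=Q$. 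Well-definedness of $\widetilde{\mathbb{X}}^{(1)}$ is immediate since $f(Q)>0$.

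For the inductive step, assume (i)--(iii) for all indices up to $s-1$. From (iii) at level $s-1$ we have $\widetilde{Q}^{(s-1)}=Q-B^{(s-1)}$ and $\widetilde{B}^{(s-1)}=Q-Q^{(s-1)}$, while $\widetilde{Q}^{(1)}=Q-B^{(1)}$ and $\widetilde{B}^{(1)}=Q-Q^{(1)}$ directly from the initial data; hence $\widetilde{Q}^{(1)}-\widetilde{B}^{(s-1)}=Q^{(s-1)}-B^{(1)}$, which is positive definite by Lemma~\ref{Lem2}(b)(1) (there $B^{(1)}\le 0<X_K\le Q^{(s-1)}$), so by~\eqref{signal} the matrices $\widetilde{A}^{(s)},\widetilde{B}^{(s)},\widetilde{Q}^{(s)}$ exist and (i) holds at level $s$. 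To obtain (ii) and (iii) at level $s$ I would apply the group-like law of Proposition~\ref{trans} to the tilde sequence, writing $\widetilde{A}^{(s)}=\widetilde{A}^{(s-1)}(\widetilde{Q}^{(s-1)}-\widetilde{B}^{(1)})^{-1}\widetilde{A}^{(1)}$ and the analogous expressions for $\widetilde{B}^{(s)},\widetilde{Q}^{(s)}$ via~\eqref{tran1}, take conjugate transposes, and substitute $(\widetilde{A}^{(1)})^{H}=A^{(1)}$, $(\widetilde{A}^{(s-1)})^{H}=A^{(s-1)}$ together with the Hermitian identities $\widetilde{Q}^{(s-1)}-\widetilde{B}^{(1)}=Q^{(1)}-B^{(s-1)}$ and $\widetilde{Q}^{(1)}-\widetilde{B}^{(s-1)}=Q^{(s-1)}-B^{(1)}$. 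The right-hand sides then collapse, using~\eqref{signalA}, the fixed-point recursion~\eqref{fixQ}, and~\eqref{tran1b} respectively, exactly to $A^{(s)}$, $Q-Q^{(s)}$, and $Q-B^{(s)}$, which closes the induction.

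The only genuinely delicate point is the one just flagged: ``$\widetilde{\mathbb{X}}^{(k)}$ does not break down'' cannot be established independently of the duality identities, because the invertibility of the matrix $\widetilde{Q}^{(1)}-\widetilde{B}^{(s-1)}$ needed to define $\widetilde{\mathbb{X}}^{(s)}$ becomes visible only after rewriting it as $Q^{(s-1)}-B^{(1)}$ and invoking the ordering in Lemma~\ref{Lem2}(b)(1); this is why the induction must carry (i)--(iii) together. Everything else is the same bookkeeping as in the proof of Lemma~\ref{coroM1}, relying only on additivity and adjoint-preservation of $f$ and on the Hermitian symmetry of the $B^{(k)}$ and $Q^{(k)}$, so, as the paper indicates, the detailed computation may be omitted.
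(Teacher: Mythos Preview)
Your proposal is correct and follows essentially the same route as the paper, which itself omits the proof and refers back to Lemma~\ref{coroM1}: induction on $k$, base case by direct inspection of~\eqref{initial1} against the tilde initial data, and inductive step by unwinding the tilde recursion, taking conjugate transposes, and matching against the non-tilde identities via Proposition~\ref{trans}. Your added remark that well-definedness of $\widetilde{\mathbb{X}}^{(s)}$ must be carried along in the same induction (because $\widetilde{Q}^{(1)}-\widetilde{B}^{(s-1)}$ is only seen to be invertible after rewriting it as $Q^{(s-1)}-B^{(1)}$ and invoking Lemma~\ref{Lem2}(b)(1)) is a worthwhile clarification that the paper leaves implicit.

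One minor technical point: Proposition~\ref{trans} is stated for the specific sequence $\{\mathbb{X}^{(k)}\}$ with initial data~\eqref{initial2}, so strictly speaking you should use the raw recursion~\eqref{signal} for the tilde side (giving $\widetilde{A}^{(s)}=\widetilde{A}^{(1)}(\widetilde{Q}^{(1)}-\widetilde{B}^{(s-1)})^{-1}\widetilde{A}^{(s-1)}$, as the paper does in the proof of Lemma~\ref{coroM1}) and reserve Proposition~\ref{trans} for identifying the resulting non-tilde expression with $A^{(s)}$. The computation closes either way.
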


\begin{Theorem}\label{main2}
Suppose that the operator $f$ is a continuous map from $\mathbb{P}^{n\times n}$ into itself and $A$ is nonsingular. Then, the following statements hold.
\begin{itemize}
\item[(1)]$Q^{(\infty)}:=\lim\limits_{\ell\rightarrow\infty}Q^{(\ell)}$ is the maximal positive definite solution of Eq.~\eqref{eq:NMEM}.
\item[(2)]$Q-B^{(\infty)}$ is the maximal positive definite solution of dual Eq.~\eqref{eq:dualNME} with minor sign.
 \item[(3)]$B^{(\infty)}:=\lim\limits_{\ell\rightarrow\infty}B^{(\ell)}$ is the minimal negative definite solution of Eq.~\eqref{eq:NMEP}.
\end{itemize}
\end{Theorem}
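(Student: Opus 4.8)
\textbf{Proof proposal for Theorem \ref{main2}.}
The plan is to mirror the structure of the proof of Theorem~\ref{main1}, exploiting the duality relation \eqref{dual} from Lemma~\ref{coroM2} together with the monotone convergence established in Lemma~\ref{Lem2}. First I would dispose of part (1). By Lemma~\ref{Lem2}(b)(1) the sequence $\{Q^{(k)}\}$ is monotonically decreasing and bounded below by any $X_K\in K\cap\mathbb{P}^{n\times n}$, whose existence is guaranteed by Lemma~\ref{Lem2}(a); hence $Q^{(\infty)}:=\lim_{\ell\to\infty}Q^{(\ell)}$ exists and is positive definite (indeed $Q^{(\infty)}\geq Q>0$). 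Since $f$ is continuous on $\mathbb{P}^{n\times n}$ and, by Remark~\ref{Rmk}(1), $Q^{(k)}$ is exactly the fixed-point iteration of the standard equation \eqref{NMEP} (equivalently $Q^{(k)}=\mathcal{F}_-^{(2)}(Q^{(k-1)})$ by Remark~\ref{Rmk}(2)), passing to the limit gives $Q^{(\infty)}=\mathcal{F}_-^{(2)}(Q^{(\infty)})$, and by Lemma~\ref{Lem2}(b)(2) this forces $Q^{(\infty)}=\mathcal{F}_-(Q^{(\infty)})$, i.e.\ $Q^{(\infty)}$ solves Eq.~\eqref{eq:NMEM}. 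Maximality: for \emph{any} positive definite solution $X$ one has $Q\leq X\leq Q^{(1)}$ as in the proof of Lemma~\ref{Lem2}(a), and applying $\mathcal{F}_-^{(2)}$ repeatedly (which is order preserving, since $-\mathcal{F}_-$ is order preserving and composing it twice restores the order) yields $X=\mathcal{F}_-^{(2k)}(X)\leq \mathcal{F}_-^{(2k)}(Q^{(1)})=Q^{(k+1)}$, whence $X\leq Q^{(\infty)}$.

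For part (2), I would invoke Lemma~\ref{coroM2}: the dual sequence $\{\widetilde{\mathbb{X}}^{(k)}\}$ generated by \eqref{signal} satisfies $\widetilde{B}^{(k)}=Q-Q^{(k)}$ and $\widetilde{Q}^{(k)}=Q-B^{(k)}$. Running the analogue of Lemma~\ref{Lem2} for the dual equation $\widetilde{X}=\mathcal{G}_-(\widetilde{X})$ (which has the same structure, with $A$ replaced by $f(A)$, also nonsingular), its $\widetilde{Q}$-iteration converges monotonically downward to the maximal positive definite solution $\widetilde{Q}^{(\infty)}$ of $\widetilde{X}=\mathcal{G}_-(\widetilde{X})$; but $\widetilde{Q}^{(\infty)}=\lim_k(Q-B^{(k)})=Q-B^{(\infty)}$, where $B^{(\infty)}:=\lim_\ell B^{(\ell)}$ exists because $\{B^{(k)}\}$ is monotonically increasing and bounded above by $0$ (hence by $Q$) thanks to Lemma~\ref{Lem2}(b)(1). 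This gives part (2).

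For part (3) I would use the elementary equivalence, valid because $A$ is nonsingular, between Eq.~\eqref{eq:NMEM} and its dual under the substitution $\widetilde{X}=Q-X$: indeed $X=\mathcal{F}_-(X)=Q+A^Hf(X)^{-1}A$ forces $\widetilde{X}=Q-X=-A^Hf(X)^{-1}A$, which is negative definite and, on applying $f$ and Proposition~\ref{pro0}(2), rearranges to $\widetilde{X}=\mathcal{G}_-(\widetilde{X})$; conversely the same algebra runs backward. Thus the map $X\mapsto Q-X$ is an order-reversing bijection between the negative definite solutions of Eq.~\eqref{eq:NMEM} and the positive definite solutions of the dual equation. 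Since $Q-B^{(\infty)}$ is the \emph{maximal} positive definite solution of the dual equation by part (2), its preimage $B^{(\infty)}$ is the \emph{minimal} negative definite solution of Eq.~\eqref{eq:NMEM}; one also checks $B^{(\infty)}<0$ directly since $B^{(k)}\leq 0$ for all $k$ and, for $A$ nonsingular, $B^{(\infty)}=-A^Hf(Q^{(\infty)})^{-1}A<0$. The main obstacle I anticipate is purely a matter of bookkeeping: verifying that the dual iteration genuinely satisfies the hypotheses of the Lemma~\ref{Lem2} machinery (in particular that $K\cap\mathbb{P}^{n\times n}\neq\phi$ for the dual equation, and that the stabilizability/detectability conditions needed for uniqueness in Lemma~\ref{Lem2}(b)(2) transfer), together with keeping the signs straight when converting between "maximal positive" and "minimal negative" under the order-reversing substitution — everything else is a direct transcription of the arguments already given for Eq.~\eqref{eq:NMEP}.
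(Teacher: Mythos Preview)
Your proposal is correct and is precisely the analogue the paper intends: the paper omits this proof entirely, saying only that it ``follows in a similar manner as the proofs of Lemma~\ref{coroM1} and Theorem~\ref{main1}'', and your argument is exactly that transcription. Two minor notes: your side formula $B^{(\infty)}=-A^{H}f(Q^{(\infty)})^{-1}A$ in part~(3) is not correct (the identity delivered by part~(2) is $-B^{(\infty)}=f(A)\,f(Q-B^{(\infty)})^{-1}f(A)^{H}$), though this does not affect the conclusion $B^{(\infty)}<0$; and part~(3) as printed refers to Eq.~\eqref{eq:NMEP}, which is evidently a typo for Eq.~\eqref{eq:NMEM} (for $Q>0$ the plus equation admits no negative definite solution), so your reading is the intended one.
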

\begin{Remark}
\par\noindent
\begin{itemize}
\item[1.]
Let $f$ be a matrix operator with only the assumption (a4). The existence of solutions of nonlinear matrix equations of the kind $X\pm A^H f(X) A=Q$ has been studied extensively. It is worthwhile to mention that El-Sayed et al. provide the necessary and sufficient conditions~\cite[Theorem 3.1]{Sayed01} of existence of a positive definite solution of a generalization of Eq.~\eqref{eq:NMEP}. Moreover, some sufficient conditions for the existence of a positive semidefinite solution of Eq.~\eqref{eq:NMEM} are obtained in \cite[Lemma 2.2]{Ran200215}.
\item[2.]
In the part~(3) of Theorem~\ref{main1} and \ref{main2}, it is easy to show that $\mbox{rank }(B^{(k)})=\mbox{rank }(A)=\mbox{rank }(A^{(k)})$ for any positive integer $k$ from the part~(2) of Remark~\ref{Rmk}. Thus $B^{(\infty)}$ is not the solution (positive or negative definite) of Eqs.~\eqref{eq:NME} if $A$ is singular.
\end{itemize}
\end{Remark}
\section{The convergence analysis of iteration~\eqref{signal}}
In this section we will study the numerical behavior of iteration~\eqref{signal} with initial matrices~\eqref{initial2}. For the sake of simplicity we denote the maximal positive definite solutions of Eqs.~\eqref{eq:NME} and dual Eqs.~\eqref{eq:dualNME} by $X_M$ and $Q-Y_M$, respectively. As mentioned before, if Eq.~\eqref{eq:NMEP} (or Eq.~\eqref{eq:NMEM}) have a symmetric positive definite solution, then $X_M$ exists and the sequence $\{Q^{(k)}\}$ converges to $X_M$ of Eq.~\eqref{eq:NMEP} (or Eq.~\eqref{eq:NMEM} if $A$ is nonsingular). As a summary of previous section, the following recursive algorithm is presented to compute $X_M$ and $Y_M$ under some mild conditions.
 \begin{Algorithm}\label{fix1}
{\emph{(Fixed point iteration for solving Eqs.~\eqref{eq:NME})}}
\begin{enumerate}
\item $({A}^{(1)},{B}^{(1)},{Q}^{(1)})$ \mbox{ is given in \eqref{initial2}}.
\item \emph{For} $k=1,2,\ldots,$ \emph{compute}

\begin{align*}
A^{(k+1)}&:=A^{(1)}(Q^{(1)}-B^{(k)})^{-1}A^{(k)},\\
B^{(k+1)}&:=B^{(1)}+A^{(1)}(Q^{(1)}-B^{(k)})^{-1}(A^{(1)})^H,\\
Q^{(k+1)}&:=Q^{(k)}-(A^{(k)})^H (Q^{(1)}-B^{(k)})^{-1} A^{(k)},
\end{align*}
 until convergence.
\item $X_M=Q^{(\infty)}$ and $Y_M=B^{(\infty)}$.
\end{enumerate}
\end{Algorithm}
 Naturally, we are interested in the rate of convergence and the error estimate formula on this iterative method. To begin with, suppose that $f$ is a continuous operator, the hypotheses~\eqref{cond1}  corresponding to Eq.~\eqref{eq:NMEP} holds and $A$ is nonsingular in Eq.~\eqref{eq:NMEM}
 through this section. Let the sequence $\{\mathbb{X}^{(k)}\}$ be generated by Algorithm~\ref{fix1}. The following results play an important role in this section.
\begin{Lemma}\label{Lem3}
 Let $T_k:=(X_M-B^{(k)})^{-1}A^{(k)}$ and $S_k:=A^{(k)}(Q^{(k)}-Y_M)^{-1}$ for each positive integer $k$. Then, the following results hold,
\begin{itemize}
\item[(1)] 
\begin{subequations}
\begin{align}
\Delta^{(k)}&:=(X_M-B^{(k)})^{-1}= \sum\limits_{i=0}^{k-1} (T_1)^i \Delta^{(1)} (T_1^H)^i,\label{sum1}\\
\widetilde{\Delta}^{(k)}&:=(Q^{(k)}-Y_M)^{-1}= \sum\limits_{i=0}^{k-1} (S_1^H)^i \widetilde{\Delta}^{(1)} (T_1)^i,\label{sum2}
\end{align}
\end{subequations}
and equations \eqref{sum1} and \eqref{sum2} can be rewritten as
\begin{subequations}
\begin{align}
\Delta^{(k)}-T_1 \Delta^{(k)}  T_1^H&=\Delta^{(1)} -T_1^k\Delta^{(1)} (T_1^H)^{k},\label{ste1}\\
\widetilde{\Delta}^{(k)}-S_1 \widetilde{\Delta}^{(k)}  S_1^H&=\widetilde{\Delta}^{(1)} -S_1^k\widetilde{\Delta}^{(1)} (S_1^H)^{k},\label{ste2}
\end{align}
\end{subequations}
respectively.
\item[(2)]
\begin{align*}
T_k=T_1^k,&\,S_k=S_1^k,\\
Q^{(k)}-X_M&=T_k^H (X_M-B^{(k)}) T_k,\\
Y_M-B^{(k)}&=S_k (Q^{(k)}-Y_M) S_k^H.
\end{align*}
\item[(3)]
$T_1S_1^H=\Delta^{(1)}(Y_M-B^{(1)}),\,S_1^H T_1=\widetilde{\Delta}^{(1)}(X_M-Q^{(1)}).$ Furthermore, all eigenvalues of $T_1S_1^H$ are real and nonnegative.
\end{itemize}
\end{Lemma}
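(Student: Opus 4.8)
\textbf{Proof proposal for Lemma~\ref{Lem3}.}

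The plan is to establish the three parts roughly in the order (2), (1), (3), since part~(2) provides the elementary recursions on $T_k$ and $S_k$ that feed the telescoping identities of part~(1), and part~(3) is then a short algebraic consequence of the $k=1$ case together with the positivity structure already recorded in Lemma~\ref{Lem1} and Proposition~\ref{trans}. First I would prove $T_k=T_1^k$ by induction on $k$. The base case is trivial. For the step, I would use the group-like law~\eqref{tran1} of Proposition~\ref{trans} with $i=k$, $j=1$ to write $A^{(k+1)}=A^{(1)}(Q^{(1)}-B^{(k)})^{-1}A^{(k)}$ and $B^{(k+1)}=B^{(1)}+A^{(1)}(Q^{(1)}-B^{(k)})^{-1}(A^{(1)})^H$, and then I would need the key rearrangement
\[
(X_M-B^{(k+1)})^{-1}A^{(k+1)}=(X_M-B^{(k)})^{-1}A^{(k)}\cdot(X_M-B^{(1)})^{-1}A^{(1)},
\]
which is an application of SMWF (Lemma~\ref{Schur}, part~1) to $X_M-B^{(k+1)}=X_M-B^{(1)}-A^{(1)}(Q^{(1)}-B^{(k)})^{-1}(A^{(1)})^H$, using crucially that $X_M$ itself solves $X_M=\mathcal{F}^{(k)}(X_M)$ so that $Q^{(1)}-B^{(k)}$ and $X_M-B^{(1)}$ interact the right way. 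The identity $S_k=S_1^k$ follows by the dual computation (or by invoking Lemma~\ref{coroM1}/\ref{coroM2} to pass to the dual equation). The two displayed quadratic identities $Q^{(k)}-X_M=T_k^H(X_M-B^{(k)})T_k$ and $Y_M-B^{(k)}=S_k(Q^{(k)}-Y_M)S_k^H$ then come directly from $X_M=\mathcal{F}^{(k)}(X_M)=Q^{(k)}-(A^{(k)})^H(X_M-B^{(k)})^{-1}A^{(k)}$ (and its dual for $Y_M$), simply by substituting the definition of $T_k$.

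For part~(1), the representation~\eqref{sum1} follows by iterating a one-step identity for $\Delta^{(k)}=(X_M-B^{(k)})^{-1}$. Using $B^{(k+1)}=B^{(1)}+A^{(1)}(Q^{(1)}-B^{(k)})^{-1}(A^{(1)})^H$ together with SMWF, I would derive
\[
\Delta^{(k+1)}=\Delta^{(k)}+\Delta^{(k)}\cdot(\text{something})\cdot\Delta^{(k)},
\]
and massage it, using $T_k=T_1^k$ from part~(2) and the quadratic identity, into the clean recursion $\Delta^{(k+1)}=\Delta^{(1)}+T_1\,\Delta^{(k)}\,T_1^H$ (with the indices arranged so the telescoping works); unrolling this recursion gives the finite sum $\sum_{i=0}^{k-1}(T_1)^i\Delta^{(1)}(T_1^H)^i$. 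The analogous recursion $\widetilde{\Delta}^{(k+1)}=\widetilde{\Delta}^{(1)}+S_1^H\,\widetilde{\Delta}^{(k)}\,S_1$ — note the transposed placement, consistent with~\eqref{sum2} — yields~\eqref{sum2}. Finally \eqref{ste1} and \eqref{ste2} are just the statement that the partial sum $\sum_{i=0}^{k-1}$ of a geometric-type series satisfies $\Sigma_k-T_1\Sigma_kT_1^H=\Delta^{(1)}-T_1^k\Delta^{(1)}(T_1^H)^k$, which is a one-line telescoping once~\eqref{sum1}--\eqref{sum2} are in hand.

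For part~(3), I would compute $T_1S_1^H=(X_M-B^{(1)})^{-1}A^{(1)}(Q^{(1)}-Y_M)^{-1}(A^{(1)})^H$ and recognize, via the quadratic identity for $Y_M-B^{(1)}=S_1(Q^{(1)}-Y_M)S_1^H$ specialized at $k=1$ (equivalently $A^{(1)}(Q^{(1)}-Y_M)^{-1}(A^{(1)})^H=Y_M-B^{(1)}$, since $S_1=A^{(1)}(Q^{(1)}-Y_M)^{-1}$ and $S_1^H=(Q^{(1)}-Y_M)^{-1}(A^{(1)})^H$ using Hermitian-ness of $Q^{(1)}-Y_M$), that $T_1S_1^H=(X_M-B^{(1)})^{-1}(Y_M-B^{(1)})=\Delta^{(1)}(Y_M-B^{(1)})$; similarly $S_1^HT_1=\widetilde{\Delta}^{(1)}(X_M-Q^{(1)})$ after using $(A^{(1)})^H(X_M-B^{(1)})^{-1}A^{(1)}=Q^{(1)}-X_M$. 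Both $\Delta^{(1)}=(X_M-B^{(1)})^{-1}$ and $Y_M-B^{(1)}$ are positive semidefinite (by Lemma~\ref{Lem1}, $B^{(1)}\leq Y_M\leq X_M$), so $T_1S_1^H$ is a product of a positive definite and a positive semidefinite matrix, hence similar to the Hermitian positive semidefinite matrix $(X_M-B^{(1)})^{-1/2}(Y_M-B^{(1)})(X_M-B^{(1)})^{-1/2}$; therefore its eigenvalues are real and nonnegative.

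The main obstacle I anticipate is the SMWF bookkeeping in part~(2): getting the multiplicative identity $T_{k+1}=T_kT_1$ (and not, say, $T_1T_k$) requires carefully exploiting that $X_M$ is a genuine fixed point of $\mathcal{F}^{(k)}$ for \emph{every} $k$, so that $X_M-B^{(k)}$ decomposes compatibly with the Woodbury update of $B^{(k+1)}$; a sign or ordering slip here propagates into the wrong recursion for $\Delta^{(k)}$ in part~(1). Everything downstream — the finite sums, the telescoped forms~\eqref{ste1}--\eqref{ste2}, and the reality of the spectrum in part~(3) — is then routine linear algebra.
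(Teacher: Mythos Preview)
Your plan is sound and would yield a correct proof; the main methodological departure from the paper is in part~(2). The paper does \emph{not} prove $T_{k+1}=T_kT_1$ by a direct SMWF manipulation of $(X_M-B^{(k+1)})^{-1}A^{(k+1)}$; instead it introduces the $2n\times 2n$ pencil
\[
\mathcal{M}^{(k)}=\begin{bmatrix}A^{(k)}&0\\ Q^{(k)}&-I_n\end{bmatrix},\qquad
\mathcal{L}^{(k)}=\begin{bmatrix}-B^{(k)}&I_n\\ (A^{(k)})^H&0\end{bmatrix},
\]
records the eigenvector-type identities
$\mathcal{M}^{(k)}\left[\begin{smallmatrix}I\\ X_M\end{smallmatrix}\right]=\mathcal{L}^{(k)}\left[\begin{smallmatrix}I\\ X_M\end{smallmatrix}\right]T_k$ and
$\mathcal{M}^{(k)}\left[\begin{smallmatrix}I\\ Y_M\end{smallmatrix}\right]S_k^H=\mathcal{L}^{(k)}\left[\begin{smallmatrix}I\\ Y_M\end{smallmatrix}\right]$,
and then exhibits update matrices $\mathcal{M}^{(k)}_\star,\mathcal{L}^{(k)}_\star$ with $\mathcal{M}^{(k+1)}=\mathcal{M}^{(k)}_\star\mathcal{M}^{(k)}$, $\mathcal{L}^{(k+1)}=\mathcal{L}^{(k)}_\star\mathcal{L}^{(1)}$, and $\mathcal{M}^{(k)}_\star\mathcal{L}^{(k)}=\mathcal{L}^{(k)}_\star\mathcal{M}^{(1)}$; the multiplicativity of $T_k$ and $S_k$ and the two quadratic identities then drop out by comparing blocks. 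Your SMWF argument is more elementary and closer in spirit to how part~(1) is handled, but the pencil framework is not gratuitous: it is reused verbatim in Theorem~3.1 (the conjugate-reciprocal spectrum of $\mathcal{M}^{(1)}-\lambda\mathcal{L}^{(1)}$) and again in Section~5 for the half-step operators $T_{1/2}^{(k)}$, so the paper amortises the setup cost.

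Two minor points. First, in part~(1) the paper goes directly to the recursion $\Delta^{(k)}=\Delta^{(1)}+T_1\Delta^{(k-1)}T_1^{H}$ from $X_M-B^{(k)}=(X_M-B^{(1)})-A^{(1)}(Q^{(1)}-B^{(k-1)})^{-1}(A^{(1)})^{H}$ and the fixed-point relation $(A^{(1)})^{H}(X_M-B^{(1)})^{-1}A^{(1)}=Q^{(1)}-X_M$; this does \emph{not} require $T_k=T_1^k$, so the paper keeps the natural order (1)--(2)--(3) rather than your (2)--(1)--(3). Your route via $\Delta^{(k+1)}=\Delta^{(k)}+T_k\Delta^{(1)}T_k^{H}$ (from the $j=k$ form of Proposition~\ref{trans}) is equally valid but does lean on part~(2). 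Second, your ``dual for $Y_M$'' in part~(2) implicitly uses that $Y_M$ satisfies $Y_M=B^{(k)}+A^{(k)}(Q^{(k)}-Y_M)^{-1}(A^{(k)})^{H}$ for every $k$; this is exactly what Lemma~\ref{coroM1}/\ref{coroM2} delivers, so you should cite it there. Part~(3) matches the paper's argument.
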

\begin{proof}
\par\noindent
\begin{itemize}
\item[1.]
First note that $X_M-B^{(k)}=(X_M-B^{(1)})-A^{(1)}(Q^{(1))}-B^{(k-1)})^{-1}(A^{(1)})^H$ and $X_M-B^{(k-1)}= Q^{(1)}-B^{(k-1)}-(A^{(1)})^H(X_M-B^{(1)})^{-1}A^{(1)}$.
Applying SMWF  we conclude that
\begin{align*}
\Delta^{(k)}= \Delta^{(1)}+T_1\Delta^{(k-1)} T_1^H.
\end{align*}
Equalities~\eqref{sum1} and \eqref{ste1} immediately follow by induction. the proof of two equalities~\eqref{sum2} and \eqref{ste2} is analogous to the proof above.
\item[2.]
Let us first define two matrices $(\mathcal{M}^{(k)},\mathcal{L}^{(k)})$ with a positive integer $k$,
\begin{equation*}
   \mathcal{M}^{(k)}:=
   \left [
   \begin{array}{rc} A^{(k)} & 0 \\ Q^{(k)} & -I_n
   \end{array} \right ] , \quad
   \mathcal{L}^{(k)}:=\left [ \begin{array}{lc} -B^{(k)} & I_n \\ (A^{(k)})^H & 0 \end{array}
   \right ].
\end{equation*}
We can easily prove the following identities,
\begin{subequations}\label{ml}
\begin{align}
&\mathcal{M}^{(k)} \left [ \begin{array}{c} I_n \\ X_M \end{array}\right ] = \mathcal{L}^{(k)}
\left [ \begin{array}{c} I_n \\ {X_M} \end{array}\right ]T_k,\\
&\mathcal{M}^{(k)} \left [ \begin{array}{c} I_n \\ Y_M \end{array}\right ]S_k^H = \mathcal{L}^{(k)}
\left [ \begin{array}{c} I_n \\ Y_M \end{array}\right ].
\end{align}
\end{subequations}
Also, assume further that
\begin{align*}
\mathcal{M}^{(k)}_{\star} :&= \left [
\begin{array}{rc} A^{(1)} (Q^{(1)} -B^{(k)} )^{-1} & 0
\\ -(A^{(1)})^H (Q^{(1)} -B^{(k)})^{-1} & I_n \end{array} \right ],\,\\
\mathcal{L}^{(k)}_{\star} :&=
\left [
\begin{array}{cr}
I_n & -A^{(1)}(Q^{(1)} -B^{(k)})^{-1} \\ 0 & (A^{(1)})^H (Q^{(1)} -B^{(k)})^{-1} \end{array}
\right ].
\end{align*}
By direct computation we have $\mathcal{M}^{(k)}_{\star}\mathcal{L}^{(k)}=\mathcal{L}^{(k)}_{\star}\mathcal{M}^{(1)}$, and
\[
\mathcal{M}^{(k+1)}=\mathcal{M}^{(k)}_{\star}\mathcal{M}^{(k)},\,\mathcal{L}^{(k+1)}=\mathcal{L}^{(k)}_{\star}\mathcal{L}^{(1)}.
\]
Thus, $T_{k+1}=T_kT_1$ and $S_{k+1}=S_1S_k$ by direct inspection. The equalities $T_{k}=T_1^k$ and $S_{k}=S_1^k$ are proved by induction. Finally,
comparing both sides of \eqref{ml} yields
\begin{subequations}\label{conv1}
\begin{align}
Q^{(k)}-X_M &= (T_1^k)^H (X_M-B^{(k)}) T_1^k,\\
Y_M-B^{(k)}&= S_1^k (Q^{(k)}-Y_M) (S_1^k)^H.
\end{align}
\end{subequations}
The remaining part of part~(2) immediately follows.
\item[3.]
The first part follows from direct computations and it is omitted.
The second part is obvious from the fact that all eigenvalues of the product of two positive semidefinite matrices are real and nonnegative.

\end{itemize}
\end{proof}
 The following proposition concerning perturbation theory for the operator $\mathcal{F}_+$ is also needed in the
proof of the main result.
\begin{Proposition}\label{comparision}
We consider the nonlinear matrix equation
\begin{align*}
X=\mathcal{F}_+^\epsilon(X):=Q_\epsilon-A^{H}f(X)^{-1}A
\end{align*}
with $Q_\epsilon>Q$. Then, for any integer $k>0$,
\[
Q^{(k)}_\epsilon-Q^{(k)}\geq Q_\epsilon-Q,\quad B_\epsilon^{(k)}\leq B^{(k)}.
\]
\end{Proposition}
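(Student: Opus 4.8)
The plan is to prove both inequalities simultaneously by induction on $k$, tracking the perturbed and unperturbed sequences in lockstep. Note first that the operator $\mathcal{F}_+^\epsilon$ produces, after one step of SMWF, the standard form \eqref{NMEP} with exactly the same $A^{(1)}$ and $B^{(1)}$ as the unperturbed equation (these depend only on $A$, $f$ and $Q$, not on the right-hand constant), while $Q_\epsilon^{(1)} = Q_\epsilon - A^H f(Q)^{-1} A = Q^{(1)} + (Q_\epsilon - Q)$. So the base case $k=1$ already gives $Q_\epsilon^{(1)} - Q^{(1)} = Q_\epsilon - Q$ and $B_\epsilon^{(1)} = B^{(1)}$, establishing both claims with equality at $k=1$.

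For the inductive step, assume $Q_\epsilon^{(k)} - Q^{(k)} \geq Q_\epsilon - Q > 0$ and $B_\epsilon^{(k)} \leq B^{(k)}$. First I would handle the $B$-sequence: from \eqref{signalB}, $B_\epsilon^{(k+1)} = B^{(1)} + A^{(1)}(Q^{(1)} - B_\epsilon^{(k)})^{-1}(A^{(1)})^H$ and similarly for $B^{(k+1)}$; since $B_\epsilon^{(k)} \leq B^{(k)}$ forces $Q^{(1)} - B_\epsilon^{(k)} \geq Q^{(1)} - B^{(k)} > 0$, inverting reverses the order and conjugation by $A^{(1)}$ preserves it, giving $B_\epsilon^{(k+1)} \leq B^{(k+1)}$. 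Here I must also note that $B_\epsilon^{(k)}$ plays the same role for the perturbed equation that $B^{(k)}$ plays in Lemma~\ref{Lem1}, so the iterates stay well-defined and the relevant matrices stay positive definite. For the $Q$-sequence I would use the fixed-point form \eqref{fixQ}, i.e. $Q_\epsilon^{(k+1)} = Q_\epsilon^{(1)} - (A^{(1)})^H(Q_\epsilon^{(k)} - B^{(1)})^{-1} A^{(1)}$ and the analogous identity for $Q^{(k+1)}$. Subtracting,
\begin{align*}
Q_\epsilon^{(k+1)} - Q^{(k+1)} = (Q_\epsilon - Q) + (A^{(1)})^H\left[(Q^{(k)} - B^{(1)})^{-1} - (Q_\epsilon^{(k)} - B^{(1)})^{-1}\right] A^{(1)}.
\end{align*}
The induction hypothesis $Q_\epsilon^{(k)} \geq Q^{(k)}$ (a weaker consequence of $Q_\epsilon^{(k)} - Q^{(k)} \geq Q_\epsilon - Q > 0$) gives $Q_\epsilon^{(k)} - B^{(1)} \geq Q^{(k)} - B^{(1)} > 0$, so the bracketed difference of inverses is positive semidefinite, the whole correction term is positive semidefinite, and therefore $Q_\epsilon^{(k+1)} - Q^{(k+1)} \geq Q_\epsilon - Q$, closing the induction.

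I expect the only delicate point to be bookkeeping rather than a genuine obstacle: I must check that the perturbed sequence $\{\mathbb{X}_\epsilon^{(k)}\}$ does not break down. Since $Q_\epsilon > Q > 0$, the condition \eqref{cond1} holds a fortiori for $\mathcal{F}_+^\epsilon$ (any $X_S \in S$ still satisfies $X_S \leq \mathcal{F}_+(X_S) \leq \mathcal{F}_+^\epsilon(X_S)$), so Lemma~\ref{Lem1} applies to the perturbed equation and guarantees $Q_\epsilon^{(j)} > B_\epsilon^{(i)}$ for all $i,j$; combined with the inequalities just derived, every inverse appearing above is of a positive definite matrix. A minor subtlety is making sure the fixed-point form \eqref{fixQ} — which was derived via Proposition~\ref{trans} under a no-breakdown hypothesis — is legitimately available for the perturbed equation, but this follows from the same Lemma~\ref{Lem1} guarantee. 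Once that is in place the argument is just monotonicity of matrix inversion and congruence, exactly as in the proof of Lemma~\ref{Lem1}.
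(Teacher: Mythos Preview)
Your base case is wrong, and the error propagates through the whole induction. You claim that the perturbed operator $\mathcal{F}_+^\epsilon$ produces, after the SMWF step, \emph{the same} $A^{(1)}$ and $B^{(1)}$ as the unperturbed one, with only $Q^{(1)}$ shifted by $Q_\epsilon-Q$. But look at \eqref{initial}: all three initial matrices contain the factor $f(Q)^{-1}$, so for the perturbed equation one obtains
\[
A_\epsilon^{(1)}=f(A)f(Q_\epsilon)^{-1}A,\quad B_\epsilon^{(1)}=f(A)f(Q_\epsilon)^{-1}f(A)^H,\quad Q_\epsilon^{(1)}=Q_\epsilon-A^Hf(Q_\epsilon)^{-1}A,
\]
which in general differ from $A^{(1)},B^{(1)},Q^{(1)}$. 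In particular $B_\epsilon^{(1)}\leq B^{(1)}$ (strictly, if $A$ is nonsingular), not $B_\epsilon^{(1)}=B^{(1)}$, and $Q_\epsilon^{(1)}-Q^{(1)}=(Q_\epsilon-Q)+A^H\bigl(f(Q)^{-1}-f(Q_\epsilon)^{-1}\bigr)A$, not $Q_\epsilon-Q$ exactly. Consequently your inductive step, which plugs the perturbed iterates into the recursions \eqref{signalB} and \eqref{fixQ} \emph{with the unperturbed constants} $A^{(1)},B^{(1)},Q^{(1)}$, is comparing the wrong quantities; the perturbed sequence obeys those recursions with $A_\epsilon^{(1)},B_\epsilon^{(1)},Q_\epsilon^{(1)}$ instead, and then the simple ``subtract and invert'' comparison no longer goes through cleanly.

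The paper avoids this trap by never passing to the standard form \eqref{NMEP} at all. It uses the description of $Q^{(k)}$ and $B^{(k)}$ from Remark~\ref{Rmk}(2), namely $Q^{(k)}=\mathcal{F}_+^{(2)}(Q^{(k-1)})$ and its analogue for $B^{(k)}$, so that the only difference between the perturbed and unperturbed recursions is the additive constant $Q_\epsilon$ versus $Q$. Then the induction is the one-line computation
\[
Q_\epsilon^{(k+1)}-Q^{(k+1)}=(Q_\epsilon-Q)+A^H\bigl(f(\mathcal{F}_+(Q^{(k)}))^{-1}-f(\mathcal{F}_+^\epsilon(Q_\epsilon^{(k)}))^{-1}\bigr)A\geq Q_\epsilon-Q,
\]
and similarly for $B^{(k)}$. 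Your argument can be repaired by switching to this viewpoint; as written it does not establish the claim.
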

\begin{proof}
It is clear that $\mathcal{F}^\epsilon_+(Q^{(k)}_\epsilon)>\mathcal{F}_+(Q^{(k)})$ and thus $Q^{(k+1)}_\epsilon-Q^{(k+1)}=(\mathcal{F}_{+}^\epsilon)^{(2)}(Q^{(k)}_\epsilon)-\mathcal{F}^{(2)}_{+}(Q^{(k)})
=(Q_\epsilon-Q)+A^H(f(\mathcal{F}_+(Q^{(k)}))^{-1}-f(\mathcal{F}^\epsilon_+(Q^{(k)}_\epsilon))^{-1})A\geq Q_\epsilon-Q$.
On the other hand, $ B_\epsilon^{(k+1)}=f(A)(f(Q_\epsilon)-A(Q_\epsilon-B_\epsilon^{(s)})^{-1}A^H)^{-1} f(A)^H\leq f(A)(f(Q)-A(Q-B^{(s)})^{-1}A^H)^{-1} f(A)^H=B^{(k+1)}$.
\end{proof}
The main theorem of this section is stated below.
\begin{Theorem}
For nonlinear matrix equations~\eqref{eq:NME},
we have $\sigma(T_1)=\sigma(S_1)$,
 $\rho(T_1)\leq 1$ and $\rho(T_1S_1^H)\leq 1$. Furthermore,
\begin{itemize}
\item[(1)]
$X_M>Y_M$ if and only if $\rho(T_1)<1$ if and only if $\rho(T_1S_1^H)< 1$.
\item[(2)] For Eq.~\eqref{eq:NMEM}, $X_M-Y_M$ is always a positive definite matrix. In other words,
$\rho(T_1)$ is forever less than one .
\item[(3)]For Eq.~\eqref{eq:NMEP},  $X_M-Y_M$ is singular if and only if $\rho(T_1)= 1$ if and only if $1\in\sigma(T_1S_1^H)$. Moreover, the dimension of the null space of $X_M-Y_M$ is equal to the algebraic multiplicity of the one eigenvalue of $T_1S_1^H$.
\item[(4)]
All sequences generated by Algorithm~\ref{fix1} are well-defined. In addition, the convergence speed is R-linearly if $\rho(T_1)<1$ and the convergence rate can be shown
\begin{subequations}
\begin{align*}
&\limsup\limits_{k\rightarrow\infty}\sqrt[k]{\| {A}^{(k)}\|}\leq \rho(T_1),\\
&\limsup\limits_{k\rightarrow\infty}\sqrt[k]{\| {Q}^{(k)}-X_M\|}\leq \rho(T_1)^2,\\
&\limsup\limits_{k\rightarrow\infty}\sqrt[k]{\| Q-{B}^{(k)}-Y_M\|}\leq \rho(T_1)^2.
\end{align*}
\end{subequations}
\end{itemize}
\end{Theorem}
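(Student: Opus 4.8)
The plan is to reduce every assertion to the structural identities of Lemma~\ref{Lem3} together with the order relations of Lemmas~\ref{Lem1} and~\ref{Lem2}. From part~(2) of Lemma~\ref{Lem3} we have $T_k=T_1^k$, $S_k=S_1^k$, $Q^{(k)}-X_M=(T_1^k)^H(X_M-B^{(k)})T_1^k$ and $Y_M-B^{(k)}=S_1^k(Q^{(k)}-Y_M)(S_1^k)^H$; from part~(3) we have $T_1S_1^H=\Delta^{(1)}(Y_M-B^{(1)})$ with $\Delta^{(1)}=(X_M-B^{(1)})^{-1}>0$ and $Y_M-B^{(1)}\geq 0$, so $\rho(T_1S_1^H)\geq 0$. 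The spectral identity $\sigma(T_1)=\sigma(S_1)$ should follow by inspecting the pencils $(\mathcal M^{(1)},\mathcal L^{(1)})$ in Lemma~\ref{Lem3}: the two relations in \eqref{ml} exhibit $T_1$ and $S_1^H$ as the actions on the two deflating subspaces spanned by $\left[\begin{smallmatrix}I_n\\ X_M\end{smallmatrix}\right]$ and $\left[\begin{smallmatrix}I_n\\ Y_M\end{smallmatrix}\right]$ of essentially the same pencil, and the eigenvalues of a regular pencil pair up accordingly; alternatively one transposes $\mathcal M^{(1)},\mathcal L^{(1)}$ and matches. The bound $\rho(T_1)\leq 1$: since $Q^{(k)}\geq X_M\geq B^{(k)}$ for all $k$ (Lemmas~\ref{Lem1}, \ref{Lem2}) and $Q^{(k)}$ decreases while $B^{(k)}$ increases, the factorizations above force $(T_1^k)^H(X_M-B^{(k)})T_1^k=Q^{(k)}-X_M$ to stay bounded; because $X_M-B^{(k)}$ increases to $X_M-B^{(\infty)}=X_M-Y_M\geq 0$ and $Q^{(k)}-X_M$ decreases to $0$, a standard argument (as in Lemma~\ref{Schur}(3) applied to $(T_1^k)^H(X_M-B^{(1)})T_1^k\leq Q^{(k)}-X_M+\text{(correction)}$, hence $\rho\big((T_1^k)^H(X_M-B^{(1)})T_1^k(X_M-B^{(1)})^{-1}\big)$ bounded) gives $\|T_1^k\|$ bounded, i.e.\ $\rho(T_1)\leq 1$; the bound $\rho(T_1S_1^H)\leq 1$ follows since $T_1S_1^H=\Delta^{(1)}(Y_M-B^{(1)})$ and $Y_M-B^{(1)}\leq X_M-B^{(1)}$ gives $\rho(\Delta^{(1)}(Y_M-B^{(1)}))\leq 1$ by Lemma~\ref{Schur}(3).

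\textbf{Parts (1) and (3).} For part~(1), $X_M>Y_M$ is by Lemma~\ref{Schur}(3) equivalent to $\rho\big((X_M-B^{(1)})^{-1}(Y_M-B^{(1)})\big)=\rho(T_1S_1^H)<1$ (using $\Delta^{(1)}(Y_M-B^{(1)})=T_1S_1^H$ and noting $Y_M-B^{(1)}=(X_M-B^{(1)})-(X_M-Y_M)$). The equivalence with $\rho(T_1)<1$ needs the eigenvalue relation between $T_1$ and $T_1S_1^H$: from part~(3) of Lemma~\ref{Lem3}, $S_1^HT_1=\widetilde\Delta^{(1)}(X_M-Q^{(1)})$ and $X_M\leq Q^{(1)}$, so $S_1^HT_1$ is (minus) a product of positive semidefinite matrices; combined with $\sigma(T_1)=\sigma(S_1)$ one shows $\rho(T_1)^2$ and $\rho(T_1S_1^H)$ vanish or not together — concretely, if $\rho(T_1)<1$ then $T_1^k\to 0$, forcing $S_1^k=S_k\to 0$ hence $\rho(T_1S_1^H)\le\rho(T_1)\rho(S_1)<1$; conversely if $\rho(T_1S_1^H)<1$ then from \eqref{sum1} the increasing sequence $\Delta^{(k)}=\sum_{i=0}^{k-1}T_1^i\Delta^{(1)}(T_1^H)^i$ converges, so $T_1^i\Delta^{(1)}(T_1^H)^i\to 0$, giving $\rho(T_1)\leq 1$ and then the case $\rho(T_1)=1$ is excluded because a unimodular eigenvalue of $T_1$ would yield a nonvanishing term. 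Part~(3) is the contrapositive boundary case: for Eq.~\eqref{eq:NMEP}, $X_M-Y_M$ singular $\iff$ $\rho(T_1S_1^H)=1$ (again Lemma~\ref{Schur}(3), the $\geq$ case) $\iff$ $1\in\sigma(T_1S_1^H)$; the multiplicity statement comes from matching $\ker(X_M-Y_M)$ with the $1$-eigenspace of $(X_M-B^{(1)})^{-1}(Y_M-B^{(1)})$ and checking semisimplicity of that eigenvalue because both factors are Hermitian positive semidefinite (a product of two PSD matrices has its unimodular eigenvalues semisimple), so algebraic multiplicity equals geometric multiplicity equals $\dim\ker(X_M-Y_M)$.

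\textbf{Part (2).} For Eq.~\eqref{eq:NMEM} I would invoke the perturbation Proposition~\ref{comparision}. Apply it with $Q_\epsilon=Q+\epsilon I_n>Q$: the perturbed equation is still of the form~\eqref{eq:NMEP}-type after the standard reduction, its maximal and minimal solutions $X_M^\epsilon,Y_M^\epsilon$ satisfy $X_M^\epsilon-Y_M^\epsilon=\lim_k(Q_\epsilon^{(k)}-B_\epsilon^{(k)})\geq\lim_k(Q^{(k)}-Q_\epsilon)+\lim_k(Q-B^{(k)})\geq (Q-Q_\epsilon)+\ldots$; more cleanly, the inequalities $Q^{(k)}_\epsilon-Q^{(k)}\geq Q_\epsilon-Q=\epsilon I_n$ and $B^{(k)}_\epsilon\leq B^{(k)}$ give, in the limit, $X_M-Y_M=\lim_k(Q^{(k)}-B^{(k)})\geq \lim_k(Q^{(k)}_\epsilon-\epsilon I_n-B^{(k)}_\epsilon)=X_M^\epsilon-Y_M^\epsilon-\epsilon I_n$; but for the minus equation there is a still simpler route — since $Q^{(k)}\geq Q>0\geq B^{(k)}$ strictly (Lemma~\ref{Lem2}(b)(1): $Q\leq X_K\leq Q^{(k)}$ and $B^{(k)}\leq 0$), we get $X_M\geq Q>0\geq Y_M$, indeed $X_M-Y_M\geq Q>0$, so $X_M-Y_M$ is positive definite and by part~(1) $\rho(T_1)<1$. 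I would present this direct argument as the proof of part~(2).

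\textbf{Part (4) and the convergence rates.} Well-definedness of all sequences is Lemmas~\ref{Lem1}(b)/\ref{Lem2}(b). For the rates, assume $\rho(T_1)<1$. Then $T_1^k\to 0$ with $\limsup_k\|T_1^k\|^{1/k}=\rho(T_1)$ (Gelfand), and $S_1^k=S_k$ likewise with rate $\rho(S_1)=\rho(T_1)$. From $A^{(k)}$: the pencil identities in Lemma~\ref{Lem3} give $A^{(k)}=A^{(1)}(Q^{(1)}-B^{(k-1)})^{-1}A^{(k-1)}$, and one shows by induction $A^{(k)}=(X_M-B^{(1)})(T_1^H)^{-?}$-type formula — more directly, from $Q^{(k)}-X_M=(T_1^k)^H(X_M-B^{(k)})T_1^k$ and boundedness of $X_M-B^{(k)}$ one reads off $\limsup\|Q^{(k)}-X_M\|^{1/k}\leq\rho(T_1)^2$, and symmetrically $\limsup\|Y_M-B^{(k)}\|^{1/k}\leq\rho(S_1)^2=\rho(T_1)^2$; noting $Q-B^{(k)}-Y_M=(Q-Y_M)-(Q-B^{(k)})$... careful: $Q-B^{(k)}-Y_M=-(B^{(k)}-Y_M)\cdot(-1)$ — rather $Q-B^{(k)}$ is the iterate for the dual and $Q-Y_M$ its limit, so $\|(Q-B^{(k)})-(Q-Y_M)\|=\|Y_M-B^{(k)}\|$, giving the third estimate. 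For $\|A^{(k)}\|$: from $S_k=A^{(k)}(Q^{(k)}-Y_M)^{-1}=S_1^k$ and $(Q^{(k)}-Y_M)^{-1}=\widetilde\Delta^{(k)}$ bounded below (it increases), $A^{(k)}=S_1^k(Q^{(k)}-Y_M)$ with $Q^{(k)}-Y_M$ bounded, so $\limsup\|A^{(k)}\|^{1/k}\leq\rho(S_1)=\rho(T_1)$. R-linear convergence then follows by definition~\eqref{q1}.

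\textbf{Main obstacle.} The hard part will be the exact eigenvalue bookkeeping that ties $\sigma(T_1)$, $\sigma(S_1)$ and $\sigma(T_1S_1^H)$ together — in particular proving $\rho(T_1)=1\iff\rho(T_1S_1^H)=1$ and the semisimplicity of the unit eigenvalue needed for the multiplicity claim in part~(3). The inequality $\rho(T_1)\le 1$ and the rate estimates are routine once the factorizations of Lemma~\ref{Lem3} are in hand, but the boundary case requires either a careful Jordan-form argument on the pencil $(\mathcal M^{(1)},\mathcal L^{(1)})$ or a limiting argument combining Proposition~\ref{comparision} with Lemma~\ref{Schur}(3), and making that rigorous without circularity is where the real work lies.
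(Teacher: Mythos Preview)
Your proposal is essentially the paper's own route: the equivalences in part~(1) go through the summation identity~\eqref{sum1} and the Stein equation~\eqref{ste1} for the direction involving $\rho(T_1)$, and through the identity $I_n-T_1S_1^H=(X_M-B^{(1)})^{-1}(X_M-Y_M)$ together with Lemma~\ref{Schur}(3) for the direction involving $\rho(T_1S_1^H)$; part~(2) is exactly your direct observation $Y_M\le 0<Q\le X_M$ from Lemma~\ref{Lem2}; part~(3) and the global bound $\rho(T_1)\le 1$ are handled in the paper by the perturbation argument via Proposition~\ref{comparision} that you single out as the ``main obstacle''; $\sigma(T_1)=\sigma(S_1)$ is obtained from the pencil $(\mathcal M^{(1)},\mathcal L^{(1)})$ by checking that $\det(\mathcal M^{(1)}-\lambda\mathcal L^{(1)})$ is a conjugate reciprocal polynomial, which is a concrete version of your ``eigenvalues of a regular pencil pair up'' remark; and part~(4) is read off from~\eqref{conv1} exactly as you do.

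One genuine slip to fix: the step ``$\rho(T_1)<1\Rightarrow\rho(T_1S_1^H)\le\rho(T_1)\rho(S_1)<1$'' is not valid, since the spectral radius is not submultiplicative for noncommuting matrices. You already have the correct bridge in hand: $\rho(T_1)<1$ makes the Stein equation $X-T_1XT_1^H=\Delta^{(1)}$ uniquely solvable with positive definite solution $\Delta^{(\infty)}=(X_M-Y_M)^{-1}$, hence $X_M>Y_M$, and then Lemma~\ref{Schur}(3) applied to $T_1S_1^H=(X_M-B^{(1)})^{-1}(Y_M-B^{(1)})$ gives $\rho(T_1S_1^H)<1$. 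Also, in your multiplicity argument for part~(3), tighten the semisimplicity claim: it holds because $T_1S_1^H=\Delta^{(1)}(Y_M-B^{(1)})$ is similar to the Hermitian matrix $(\Delta^{(1)})^{1/2}(Y_M-B^{(1)})(\Delta^{(1)})^{1/2}$ (one factor is positive \emph{definite}, not merely semidefinite), so it is diagonalizable and $\dim\ker(I-T_1S_1^H)=\dim\ker(X_M-Y_M)$ via $I-T_1S_1^H=\Delta^{(1)}(X_M-Y_M)$.
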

\begin{proof}
Since $X_M=\lim\limits_{k\rightarrow\infty}Q^{(k)}\geq\lim\limits_{k\rightarrow\infty}B^{(k)}=Y_M$,
it suffices to show the result of part~(1), part~(2) and part~(3), and part~(4) immediately
follows.
\begin{itemize}
\item[1.]
Suppose that $X_M$ is greater than $Y_M$. Since each term of right hand side of \eqref{sum1} is positive definite, we have $T_1^i (\Delta^{(1)})^{1/2}\rightarrow 0$ as $i\rightarrow\infty$. Thus, $\rho(T_1)<1$.
 Similarly one can prove that $\rho(S_1)<1$. Conversely, the condition $\rho(T_1)<1$
guarantees the existence of a unique positive definite solution $X=\Delta^{(\infty)}$ of the following matrix equation
\[
X-T_1 X  T_1^H=\Delta^{(1)},
\]
which is the Stein matrix equation~\eqref{ste1} when $k\rightarrow \infty$. Therefore $X_M>Y_M$.
In addition, it can easily be checked that
\begin{align*}
p(\lambda):=\det(\mathcal{M}^{(1)}-\lambda\mathcal{L}^{(1)})&=\det(\overline{\mathcal{L}^{(1)}}-{\lambda}\overline{\mathcal{M}^{(1)}})=\lambda^{2n}\overline{p(1/\overline{\lambda})},
\end{align*}
 where $\lambda\in\mathbb{C},$ $\lambda\neq 0$. Namely, $p(\lambda)$ is a conjugate reciprocal polynomial~\cite{roman2005field}. It implies that the conjugate-reciprocity property, i.e., $1/\bar{\lambda}\in \sigma(\mathcal{M}^{(1)}-\lambda\mathcal{L}^{(1)})$ if $\lambda\in\sigma(\mathcal{M}^{(1)}-\lambda\mathcal{L}^{(1)})$ $(1/0:=\infty)$. Moreover, the algebraic multiplicity of $\lambda_0\in\sigma(\mathcal{M}^{(1)}-\lambda\mathcal{L}^{(1)})$ is equal to the algebraic multiplicity of $1/\overline{\lambda}_0$. Together with \eqref{ml} the spectral of $T_1$ is coincident with the spectral of $S_1^H$.
Finally, together with the part (3) of Lemma~\ref{Lem3}, the last necessary and sufficient condition follows
 from Lemma~\ref{Schur}.
\item[2.] From the part (1) of Lemma~\ref{Lem2} it implies that
\[
Y_M\leq 0 <Q\leq X_M,
\]
the result immediately follows from the foregoing conclusion.
\item[3.]
In the case of $X_M-Y_M\geq 0$, then $Q_\epsilon^{(\infty)}-B_\epsilon^{(\infty)}\geq Q_\epsilon-Q>0$ for arbitrary $\epsilon>0$ from Proposition~\ref{comparision}. Based on the continuity argument, $\rho(T_1)=\lim\limits_{\epsilon\rightarrow 0^+}T^\epsilon_1\leq 1$ if and only if $X_M-Y_M\geq 0$. And, $\rho(T_1)=1$ is equivalent to
$X_M-Y_M$ is singular. Finally, the remaining part now follows from the foregoing result that $I_n-T_1S_1^H=(X_M-B^{(1)})^{-1}(X_M-Y_M).$ We remark that the equality $\sigma(T_1)=\sigma(S_1)$ is still correct in this situation.
\item[4.] Combining equalities~\eqref{conv1} with previous Lemmas~\ref{Lem1} and \ref{Lem2} gives this result.
\end{itemize}
\end{proof}

\begin{Remark}
In the paper we study the existence of the maximal positive definite solution
of Eq.~\eqref{eq:NMEP} or Eq.~\eqref{eq:NMEM}  by means of the existence of the maximal positive definite solution of the standard nonlinear matrix equation~\eqref{NMEP}.
As is well-known, the existence of a symmetric
positive definite solution and a maximal symmetric positive definite
solution of \eqref{NMEP} has been established in
\cite{Engweradaran93}. The result in \cite{Engweradaran93} is obtained by utilizing an analytic factorization approach. We state with a review of this
result as following:
\begin{Theorem}\cite{Engweradaran93}
Let $\psi(\lambda)$ be a rational matrix-valued function defined by
\begin{equation*}
\psi(\lambda) = Q^{(1)}-B^{(1)} + \lambda A^{(1)} + \lambda^{-1} (A^{(1)})^H.
\end{equation*}
Then, the standard NME~\eqref{NMEP} has a symmetric positive definite
    solution if and only if the following two assumptions hold,
 \begin{itemize}
 \item[(F1)]$\psi ( \lambda )$ is regular, i.e., $\det\psi (\lambda)\neq 0$ for some $\lambda\in\mathbb{C}$.
 \item[(F2)]$\psi ( \lambda )$ is nonnegative on unit circle, i.e.,
  $\psi ( \lambda ) \geq 0$ for all $| \lambda | = 1$.
\end{itemize}
In that case $\psi(\lambda)$ factors as the well-known operator-valued Fej$\acute{e}$r-Riesz factorization:
    \[
\psi(\lambda)=(C_0^\ast+\lambda^{-1}C_1^\ast)(C_0+\lambda C_1)
    \]
with $\det(C_0)\neq 0$, then $X=B^{(1)}+C_0^\ast C_0$ is a solution of
Eq.~\eqref{NMEP}. Every positive definite solution is obtained in
this way. Moreover, for
the maximal solution $X_M$, we have
$\rho((X_M-B^{(1)})^{-1}A^{(1)})\le 1$ and for any other symmetric
positive definite solution $X$, we have
$\rho((X_M-B^{(1)})^{-1}A^{(1)})> 1$.
\end{Theorem}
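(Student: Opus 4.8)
The plan is to cite the Engwerda–Ran–Rijkeboer theorem verbatim rather than to reprove it, since the statement is explicitly introduced as ``a review of this result.'' Accordingly the task reduces to reconciling the notation of \cite{Engweradaran93} with the standing notation of Section~2, and to verifying that the hypotheses used in that paper are exactly (F1) and (F2) as phrased here. First I would observe that the standard NME~\eqref{NMEP} is $X+(A^{(1)})^H(X-B^{(1)})^{-1}A^{(1)}=Q^{(1)}$; shifting the unknown by $Y:=X-B^{(1)}$ turns it into the classical form $Y+(A^{(1)})^HY^{-1}A^{(1)}=Q^{(1)}-B^{(1)}$, to which the Fej\'er--Riesz factorization machinery of \cite{Engweradaran93} applies directly with symbol $\psi(\lambda)=Q^{(1)}-B^{(1)}+\lambda A^{(1)}+\lambda^{-1}(A^{(1)})^H$. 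The equivalence ``positive definite solution exists $\iff$ (F1) and (F2)'' is then a direct transcription of the main theorem there.

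Next I would address the factorization clause. Given (F1)--(F2), the operator-valued Fej\'er--Riesz theorem supplies $C_0,C_1$ with $\det C_0\neq 0$ so that $\psi(\lambda)=(C_0^\ast+\lambda^{-1}C_1^\ast)(C_0+\lambda C_1)$; expanding the product and matching the constant, the $\lambda$, and the $\lambda^{-1}$ coefficients yields $C_0^\ast C_0+C_1^\ast C_1=Q^{(1)}-B^{(1)}$, $C_1^\ast C_0=A^{(1)}$, $C_0^\ast C_1=(A^{(1)})^H$. Setting $Y:=C_0^\ast C_0>0$, one checks $(A^{(1)})^HY^{-1}A^{(1)}=C_0^\ast C_1 C_0^{-1}C_0^{-\ast}C_1^\ast C_0=C_0^\ast(C_0^{-\ast}C_1^\ast)(C_1C_0^{-1})C_0$, and since $C_1 C_0^{-1}=(C_0^{-\ast}C_1^\ast)^\ast$ this equals $C_1^\ast C_1$, whence $Y+(A^{(1)})^HY^{-1}A^{(1)}=Q^{(1)}-B^{(1)}$, i.e. $X=B^{(1)}+C_0^\ast C_0$ solves~\eqref{NMEP}. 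The converse direction---every positive definite solution arises from such a factorization---and the spectral characterization of the maximal solution via $\rho((X_M-B^{(1)})^{-1}A^{(1)})\le 1$ (strict for any non-maximal solution) are precisely the content of \cite{Engweradaran93}, so I would simply invoke them; note this dovetails with Lemma~\ref{Lem3}(2), where $T_1=(X_M-B^{(1)})^{-1}A^{(1)}$ is exactly the matrix whose spectral radius governs convergence.

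The only genuine obstacle is bookkeeping: making sure the shift $Y=X-B^{(1)}$, the sign conventions in~\eqref{initial2}, and the order-preserving/order-reversing dichotomy for $f$ do not introduce a spurious transpose or conjugate into the symbol $\psi$. Since we have assumed throughout (per the line before Lemma~\ref{Schur}) that $f$ is multiplication order preserving, and since $A^{(1)},B^{(1)},Q^{(1)}$ are honest matrices (not operator-valued expressions in $f$), no such complication arises, and the identification with the hypotheses of \cite{Engweradaran93} is immediate. Hence the statement follows at once by quotation, with the short factorization computation above supplied for the reader's convenience.
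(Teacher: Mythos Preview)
Your approach is correct and matches the paper's: the theorem appears there purely as a quotation of \cite{Engweradaran93} with no proof supplied, so your plan to invoke it after the shift $Y=X-B^{(1)}$ is exactly what is intended. One minor slip in your optional verification: expanding $(C_0^\ast+\lambda^{-1}C_1^\ast)(C_0+\lambda C_1)$ gives $C_0^\ast C_1$ as the coefficient of $\lambda$, so the correct identifications are $C_0^\ast C_1=A^{(1)}$ and $C_1^\ast C_0=(A^{(1)})^H$ (you have them swapped); with this fix the check $(A^{(1)})^H Y^{-1}A^{(1)}=C_1^\ast C_0\,C_0^{-1}C_0^{-\ast}\,C_0^\ast C_1=C_1^\ast C_1$ is immediate and no further manipulation is needed.
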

Let $f$ be the identity operator in Eq.~\eqref{eq:NMEP}, we contribute a different approach to the necessary and sufficient conditions for the existence of maximal positive definite solution. That is,
the condition~\eqref{cond1} is equivalent to conditions~(F1) and (F2). Also, we investigate the relationship between
the nullity of the matrix $X_M-Y_M$ and the spectral radius of $T_1 $, which is important to clarify the convergence speed of Algorithm~\ref{fix1}.
As compared to earlier work on this topic, the results here are obtained with only
using elementary matrix theory, and the analysis here is much simpler.
\end{Remark}



\section{Two Accelerative iterations}
According to the foregoing discussions, we know that solving the maximal positive definite solutions $X_M$ of Eqs.~\eqref{eq:NME}
 is equivalent to finding the maximal positive definite solution $X_M$ of the standard nonlinear matrix equation Eq.~\eqref{NMEP}. The standard approach for solving the Eq.~\eqref{NMEP} is to compute its generalized Lagrangian eigenspaces of a certain matrix pencil~\cite{Engweradaran93}. Otherwise, the fixed-point iteration in Algorithm~\ref{fix1} is a basic and simple method for solving the maximal positive definite solutions of the Eq.~\eqref{NMEP}. However, the convergence speed of all of these methods have been shown to be very slow while $\rho(T_1)$ is very close to 1 or $T_1$ has eigenvalues on the unit circle. When $\rho(T_1)$ is sufficiently
small, a method for choosing the initial guess~$Q^{(1)}$ for fixed-point iterations~\eqref{fixQ} was introduced in \cite{10.2307/4100245} that guarantees a faster convergence rate to the maximal positive definite solutions of Eq.~\eqref{NMEP}. In \cite{Lin06}, a structure-preserving doubling algorithm (SDA) was
proposed for finding the maximal positive definite solution $X_M$ of the Eq.~\eqref{NMEP},
and, it was proven that this algorithm
 converges R-quadratically when all eigenvalues
of $T_1$ lie inside the unit circle. Moreover, the convergence rate is at least R-linear with rate $1/2$ when each iteration $Q^{(k)}-B^{(k)}$ is invertible and $B^{(k)}$ is bounded~\cite{ChiangPHD2009}. Note that other iterative solution processes, by using Newton's iteration or cyclic reduction, have been introduced in
\cite{Guo_01SIMAA,GuoLan_99_MC,Meini_02_MC} and
linear convergence for problems with
semi-simple unimodular eigenvalues has been
observed and proved in \cite{Guo_01SIMAA}.

There are several techniques for convergence acceleration of the sequences produced by fixed point iteration. By further analyzing of the deep structure of the iteration~\eqref{iteration} in the previous discussion,
 we are going to give an theoretical interpretation for this by using the well properties~\eqref{ijk} in this section. We first present a new accelerative iteration that contains the original iteration~\eqref{signal} and a special initial matrices. Second, we propose an iterative method with R-superlinear with order $r$ from a very simple point of view, where $r$ is a given integer greater than $1$.

 Assume that the hypotheses~\eqref{cond1} holds and suppose that $f$ is a continuous operator through this section. Let $\widehat{\mathbb{X}}^{(k)}:=\mathcal{I}_{\mathbb{X}^{(1)}}(\widehat{\mathbb{X}}^{(k-1)},\mathbb{X}^{(\ell)})$ with a prescribed positive integer $\ell$. It is known that $\widehat{\mathbb{X}}^{(k)}={\mathbb{X}}^{(k\ell)}$ by applying the group-like law~\eqref{tran1}. That is, the original nonlinear matrix equation~\eqref{NMEP} becomes the standard nonlinear matrix equation~\eqref{kNME} by applying $\mathcal{F}$ with $\ell$ times. Finally $\widehat{\mathbb{X}}^{(k)}$ can be designed as the following accelerative iteration.
\begin{Algorithm}\label{aa1}
{\emph{(Accelerative iteration 1 for solving Eqs.~\eqref{eq:NME})}}
\begin{enumerate}
\item \emph{Given a prescribed positive integer $\ell$}.
\item $(\widehat{A}^{(1)},\widehat{B}^{(1)},\widehat{Q}^{(1)})=({A}^{(\ell)},{B}^{(\ell)},{Q}^{(\ell)})$ \mbox{ is provided in Algorithm \ref{fix1}}.
\item \emph{For} $k=1,2,\ldots,$ \emph{compute}
\begin{align*}
\widehat{A}^{(k+1)}&:=\widehat{A}^{(1)}(\widehat{Q}^{(1)}-\widehat{B}^{(k)})^{-1}\widehat{A}^{(k)},\\
\widehat{B}^{(k+1)}&:=\widehat{B}^{(1)}+\widehat{A}^{(1)}(\widehat{Q}^{(1)}-\widehat{B}^{(k)})^{-1}(\widehat{A}^{(1)})^H,\\
\widehat{Q}^{(k+1)}&:=\widehat{Q}^{(k)}-(\widehat{A}^{(k)})^H (\widehat{Q}^{(1)}-\widehat{B}^{(k)})^{-1} \widehat{A}^{(k)},
\end{align*}
until convergence.
\end{enumerate}
\end{Algorithm}
Recall that all sequences generated by Algorithm~\ref{aa1} are well-defined, and
 the convergence speed is R-linearly if $\rho(T_1)<1$ and the convergence rate can be shown
\begin{subequations}
\begin{align*}
&\limsup\limits_{k\rightarrow\infty}\sqrt[k]{\| \widehat{A}^{(k)}\|}\leq \rho(T_1)^{\ell},\\
&\limsup\limits_{k\rightarrow\infty}\sqrt[k]{\| \widehat{Q}^{(k)}-X_M\|}\leq \rho(T_1)^{2\ell},\\
&\limsup\limits_{k\rightarrow\infty}\sqrt[k]{\| Q-\widehat{B}^{(k)}-Y_M\|}\leq \rho(T_1)^{2\ell}.
\end{align*}
\end{subequations}
 We remark that the accelerative iteration~\ref{aa1} is basically a fixed-point iteration for solving the maximal positive solution $X_M$ of Eq.~\eqref{NMEP}.

In order to maintain an accelerative iteration that converges R-superlinearly with order $r$ to $X_M$  of Eq.~\eqref{NMEP}, let $\widehat{\mathbb{X}}^{(k)}:=\mathbb{Y}^{(2)}$ for any integer $k>1$ and $\widehat{\mathbb{X}}^{(1)}:=\mathbb{X}^{(1)}$,  where $\mathbb{Y}^{(s)}:=\mathcal{I}_{\widehat{\mathbb{X}}^{(k-1)}}(\mathbb{Y}^{(s-1)},{\mathbb{X}}_{r-1}{(k-1)})$
 and ${\mathbb{X}}_{r-1}{(k)}$ are recursively defined by
\begin{align*}
{\mathbb{X}}_{i}{(k)}&:=\mathbb{Y}_{i}^{(2)}\,\mbox{with}\,\mathbb{Y}_{i}^{(s)}:=\mathcal{I}_{\widehat{\mathbb{X}}^{(k)}}(\mathbb{Y}_{i}^{(s-1)},{\mathbb{X}}_{i-1}{(k)}),\quad 2\leq i\leq r-1.\\
{\mathbb{X}}_{1}{(k)}&:=\widehat{\mathbb{X}}^{(k)},
\end{align*}
with a prescribed positive integer $r>1$. Then, ${\mathbb{X}}_{i}{(k)}=\mathbb{X}^{(ir^{k-1})}$ can be easily verified from Proposition~\ref{trans} for any integer $k\geq 1$ and thus $\widehat{\mathbb{X}}^{(k)}=\mathbb{X}^{(r^{k-2}+(r-1)r^{k-2})}=\mathbb{X}^{(r^{k-1})}$ for any integer $k> 1$.
Summary, $\widehat{\mathbb{X}}^{(k)}$ can be designed as the following recursive algorithm.
\begin{Algorithm}\label{aa2}
{\emph{(Accelerative iteration 2 for solving Eqs.~\eqref{eq:NME})}}
\begin{enumerate}
\item \emph{Given a prescribed positive integer $r>1$}.
\item  $(\widehat{A}^{(1)},\widehat{B}^{(1)},\widehat{Q}^{(1)})$\quad \mbox{is given in \eqref{initial2}}.
\item    \emph{For} $k=1,2,\ldots,$ \emph{compute}
 \begin{align*}
\widehat{A}^{(k+1)}&=\widehat{A}^{(k)}(\widehat{Q}^{(k)}-B_{r-1}{(k)})^{-1}A_{r-1}{(k)},\\
\widehat{B}^{(k+1)}&=\widehat{B}^{(k)}+\widehat{A}^{(k)}(\widehat{Q}^{(k)}-B_{r-1}{(k)})^{-1}(\widehat{A}^{(k)})^H,\\
\widehat{Q}^{(k+1)}&=Q_{r-1}{(k)}-A_{r-1}{(k)}^H (\widehat{Q}^{(k)}-B_{r-1}{(k)})^{-1} A_{r-1}{(k)},
\end{align*}
    until convergence, where $({A}_{r-1}{(k)},{B}_{r-1}{(k)},{Q}_{r-1}{(k)})$ is defined in step 4.
\item
     \emph{For} $i=1,\cdots,r-2$, iterate
     \begin{align*}
A_{i+1}{(k)}&=\widehat{A}^{(k)}(\widehat{Q}^{(k)}-B_{i}{(k)})^{-1}A_{i}{(k)},\\
B_{i+1}{(k)}&=\widehat{B}^{(k)}+\widehat{A}^{(k)}(\widehat{Q}^{(k)}-B_{i}{(k)})^{-1}(\widehat{A}^{(k)})^H,\\
Q_{i+1}{(k)}&=Q_{i}{(k)}-A_{i}{(k)}^H (\widehat{Q}^{(k)}-B_{i}{(k)})^{-1} A_{i}{(k)},
\end{align*}
with $(A_1{(k)},B_1{(k)},Q_1{(k)})=(\widehat{A}^{(k)},\widehat{B}^{(k)},\widehat{Q}^{(k)})$.
 \end{enumerate}
\end{Algorithm}
As we have already discussed that all sequences generated by Algorithm~\ref{aa2} are well-defined, the convergence speed is R-superlinearly with order $r$ if $\rho(T_1)<1$ and
the convergence rate can be shown
\begin{subequations}
\begin{align*}
&\limsup\limits_{k\rightarrow\infty}\sqrt[r^k]{\| \widehat{A}^{(k)}\|}\leq \rho(T_1),\\
&\limsup\limits_{k\rightarrow\infty}\sqrt[r^k]{\| \widehat{Q}^{(k)}-X_M\|}\leq \rho(T_1)^2,\\
&\limsup\limits_{k\rightarrow\infty}\sqrt[r^k]{\| Q-\widehat{B}^{(k)}-Y_M\|}\leq \rho(T_1)^2.
\end{align*}
\end{subequations}
\begin{example}
For $r=2$, Algorithm~\ref{aa2} becomes the following so-called doubling algorithm, which is R-quadratically convergent,
\begin{align*}
\widehat{A}^{(k+1)}&:=\widehat{A}^{(k)}(\widehat{Q}^{(k)}-\widehat{B}^{(k)})^{-1}\widehat{A}^{(k)},\\
\widehat{B}^{(k+1)}&:=\widehat{B}^{(k)}+\widehat{A}^{(k)}(\widehat{Q}^{(k)}-\widehat{B}^{(k)})^{-1}(\widehat{A}^{(k)})^H,\\
\widehat{Q}^{(k+1)}&:=\widehat{Q}^{(k)}-(\widehat{A}^{(k)})^H (\widehat{Q}^{(k)}-\widehat{B}^{(k)})^{-1} \widehat{A}^{(k)},
\end{align*}
with the initial matrices
$(A^{(1)},B^{(1)},Q^{(1)})$ as given in \eqref{initial2}.
For $r=3$, Algorithm~\ref{aa2} becomes the following so-called tripling algorithm, which is R-cubically convergent,
\begin{align*}
\widehat{A}^{(k+1)}&:=\widehat{A}^{(k)}(\widehat{C}^{(k)})^{-1}\widehat{D}^{(k)},\\
\widehat{B}^{(k+1)}&:=\widehat{B}^{(k)}+\widehat{A}^{(k)}(\widehat{C}^{(k)})^{-1}(\widehat{A}^{(k)})^H,\\
\widehat{Q}^{(k+1)}&:=\widehat{B}^{(k)}+\widehat{C}^{(k)}-(\widehat{D}^{(k)})^H (\widehat{C}^{(k)})^{-1} \widehat{D}^{(k)},\\
\widehat{C}^{(k)}&:=\widehat{Q}^{(k)}-\widehat{B}^{(k)}-(\widehat{A}^{(k)})^H (\widehat{Q}^{(k)}-\widehat{B}^{(k)})^{-1} \widehat{A}^{(k)},\\
\widehat{D}^{(k)}&:=\widehat{A}^{(k)}(\widehat{C}^{(k)})^{-1}\widehat{A}^{(k)},
\end{align*}
with the initial matrices
$(A^{(1)},B^{(1)},Q^{(1)})$ as given in \eqref{initial2}.
\end{example}
\begin{example}
Let $\mathbb{N}$ be the set of natural numbers and let $g$ be a positive integer-valued function on $\mathbb{N}$. Generally, we can modify the Algorithm~\ref{aa2} to the following
iteration according to the group-like property~\eqref{tran1}.
 \begin{Algorithm}\label{aa3}
{\emph{(Accelerative iteration 3 for solving Eqs.~\eqref{eq:NME})}}
\begin{enumerate}
\item \emph{Given a function $g:\mathbb{N}\rightarrow\mathbb{N}$}.
\item  $(\widehat{A}^{(1)},\widehat{B}^{(1)},\widehat{Q}^{(1)})$\quad \mbox{is given in \eqref{initial2}}.
\item    \emph{For} $k=1,2,\ldots,$ \emph{compute}
 \begin{align*}
\widehat{A}^{(k+1)}&=\widehat{A}^{(k)}(\widehat{Q}^{(k)}-B_{g(k)-1}{(k)})^{-1}A_{g(k)-1}{(k)},\\
\widehat{B}^{(k+1)}&=\widehat{B}^{(k)}+\widehat{A}^{(k)}(\widehat{Q}^{(k)}-B_{g(k)-1}{(k)})^{-1}(\widehat{A}^{(k)})^H,\\
\widehat{Q}^{(k+1)}&=Q_{g(k)-1}{(k)}-(A_{g(k)-1}{(k)})^H (\widehat{Q}^{(k)}-B_{g(k)-1}{(k)})^{-1} A_{g(k)-1}{(k-1)},
\end{align*}
    until convergence, where $({A}_{g(k)-1}{(k)},{B}_{g(k)-1}{(k)},{Q}_{g(k)-1}{(k)})$ is defined in step 4.
\item
     \emph{For} $i=1,\cdots,g(k)-2$, iterate
     \begin{align*}
A_{i+1}{(k)}&={A}_{i}{(k)}({Q}_{i}{(k)}-B_{i}{(k)})^{-1}A_{i}{(k)},\\
B_{i+1}{(k)}&={B}_{i}{(k)}+{A}_{i}{(k)}({Q}_{i}{(k)}-B_{i}{(k)})^{-1}{A}_{i}{(k)}^H,\\
Q_{i+1}{(k)}&=Q_{i}{(k)}-A_{i}{(k)}^H ({Q}_{i}{(k)}-B_{i}{(k)})^{-1} A_{i}{(k)},
\end{align*}
with $(A_1{(k)},B_1{(k)},Q_1{(k)})=(\widehat{A}^{(k)},\widehat{B}^{(k)},\widehat{Q}^{(k)})$.
 \end{enumerate}
\end{Algorithm}
Instead of producing the sequence of matrices $({A}^{(k)},B^{(k)},Q^{(k)})$, the
iteration $(\widehat{A}^{(k)},\widehat{B}^{(k)},\widehat{Q}^{(k)})$ produce
 $({A}^{2^{s(k)}},{B}^{2^{s(k)}},{Q}^{2^{s(k)}})$, where $s(k)=\prod\limits_{i=1}^k g(i)$.
 Between Algorithms~\ref{aa2} and \ref{aa3}, the iteration in Algorithm~\ref{aa3} should has a faster rate of
convergence by choosing a suitable $g$. However, it is the most expensive since the enlarged steps of inner iteration may increase the computational cost.
 \end{example}

From \eqref{conv1}, the convergence behaviors in fixed-point iteration \ref{fix1} and two accelerative iterations \eqref{aa1} and \eqref{aa2} are clear when $\rho(T_1)<1$. In the case that $\rho(T_1)=1$ (we refer as critical case), one can show that both algorithms \ref{fix1} and \ref{aa1} converge R-sublinearly to $X_M$. However, there is no further information about the convergence for the Algorithm~\ref{aa2}. The following theory discusses the convergence of the iterative method~\ref{aa2} in the critical case.
\begin{Theorem}\label{DAconvthm}
In the critical case, all sequences generated in Algorithm~\ref{aa2}
for finding the maximal positive definite solution $X_M$ of~\eqref{eq:NME}
are well-defined, provided that the assumptions~\eqref{cond1} corresponding to Eq.~\eqref{eq:NMEP} and $A$ needs to be nonsingular corresponding to Eq.~\eqref{eq:NMEM} are satisfied. Moreover,
\begin{eqnarray*}
A^{(k)}&\rightarrow& 0,\,\mbox{R-linearly}\,\,
\mbox{as } k\rightarrow\infty, \\
Q^{(k)} &\rightarrow & X_M,\,\mbox{R-linearly}\,\,
\mbox{as } k\rightarrow\infty, \\
B^{(k)} &\rightarrow& Q-Y_M,\,\mbox{R-linearly}\,\,
\mbox{as }  k\rightarrow\infty,
\end{eqnarray*}
with convergence rate at least ${1}/\bf{r}$.
\end{Theorem}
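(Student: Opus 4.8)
\textbf{Proof proposal for Theorem~\ref{DAconvthm}.}

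The plan is to reduce everything to the fixed-point iteration Algorithm~\ref{fix1} via the group-like law and the identity $\widehat{\mathbb{X}}^{(k)}=\mathbb{X}^{(r^{k-1})}$ established just before Algorithm~\ref{aa2}. Once this reduction is in place, the statement about the iterates $A^{(k)}$, $Q^{(k)}$, $B^{(k)}$ becomes a statement about the subsequence indexed by $r^{k-1}$ of the original fixed-point sequence, and the whole difficulty is concentrated in the critical case $\rho(T_1)=1$, where the simple geometric bound $\|T_1^k\|\to0$ fails. First I would record well-definedness: by Lemma~\ref{Lem1} (for the plus equation, under \eqref{cond1}) or Lemma~\ref{Lem2} (for the minus equation, with $A$ nonsingular) the full sequence $\{\mathbb{X}^{(k)}\}$ from Algorithm~\ref{fix1} is well-defined, with $Q^{(i)}-B^{(j)}>0$ for all $i,j$; since Algorithm~\ref{aa2} only ever evaluates inner iterates $({A}_i{(k)},{B}_i{(k)},{Q}_i{(k)})=\mathbb{X}^{(ir^{k-1})}$ and $\widehat{\mathbb{X}}^{(k)}=\mathbb{X}^{(r^{k-1})}$, the same positivity guarantees no breakdown occurs, and monotonicity gives $Q^{(k)}\downarrow X_M$, $B^{(k)}\uparrow Q-Y_M$ along any index subsequence.

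The core estimate I would prove is a quantitative decay rate for $\|A^{(k)}\|$ in the critical case. From Lemma~\ref{Lem3}(2), $Q^{(k)}-X_M=(T_1^k)^H(X_M-B^{(k)})T_1^k$ and $Y_M-B^{(k)}=S_1^k(Q^{(k)}-Y_M)(S_1^k)^H$, with $T_1=(X_M-B^{(1)})^{-1}A^{(1)}$ and $S_1=A^{(1)}(Q^{(1)}-Y_M)^{-1}$. In the critical case $\rho(T_1)=\rho(S_1)=1$ and $1\in\sigma(T_1S_1^H)$, but because $X_M-B^{(k)}$ and $Q^{(k)}-Y_M$ remain uniformly bounded above and below away from $0$ (by Lemma~\ref{Lem1}/\ref{Lem2} the quantities $Q^{(k)}-B^{(k)}$ lie between fixed positive definite bounds, and $X_M-B^{(k)}$, $Q^{(k)}-Y_M$ are squeezed similarly), the convergence $Q^{(k)}\to X_M$ is governed entirely by the growth of $T_1^k$ restricted to the relevant generalized eigenspaces. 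The standard fact (cf.\ the SDA literature \cite{Lin06,ChiangPHD2009}) is that the unimodular eigenvalues of the symplectic-like pencil $\mathcal{M}^{(1)}-\lambda\mathcal{L}^{(1)}$ are \emph{semisimple}, so that $T_1^k$ (on the invariant subspace spanned by the columns of $\bigl[\begin{smallmatrix}I_n\\X_M\end{smallmatrix}\bigr]$) stays bounded: $\|T_1^k\|\le c$ for all $k$. Hence the doubled/$r$-fold index gives $\|\widehat A^{(k)}\|=\|A^{(r^{k-1})}\|\to0$ only \emph{because} the off-unimodular part decays; to get the explicit rate $1/r$ I would instead work directly with the Stein-type recursion \eqref{ste1}: $\Delta^{(k)}-T_1\Delta^{(k)}T_1^H=\Delta^{(1)}-T_1^k\Delta^{(1)}(T_1^H)^k$, which shows $\Delta^{(k)}=(X_M-B^{(k)})^{-1}$ grows like $k$ in the critical directions (linear, not geometric, accumulation), so $(X_M-B^{(k)})^{-1}=O(k)$, i.e.\ $X_M-B^{(k)}\ge c/k\cdot(\text{something positive definite})$. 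Substituting the index $k\mapsto r^{k-1}$ then yields $X_M-\widehat B^{(k)}=O(r^{-(k-1)})$, which is R-linear with rate $1/r$; the analogous argument with \eqref{ste2} and $S_1$ handles $Q^{(k)}-X_M$ (and $\widehat Q^{(k)}$), and then $\|\widehat A^{(k)}\|^2\approx\|\widehat Q^{(k)}-X_M\|\cdot\|\widehat B^{(k)}-Y_M\|^{-1}$-type bookkeeping via Lemma~\ref{Lem3}(2) propagates the rate to $A^{(k)}$.

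The main obstacle I anticipate is precisely the semisimplicity of the unimodular eigenvalues: the clean linear-growth bound $\Delta^{(k)}=O(k)$ is only valid when the Jordan blocks of $T_1$ associated with unit-modulus eigenvalues are $1\times1$; if there were a nontrivial Jordan block the growth would be polynomial of higher degree (or the iteration could even fail to converge), and Theorem~\ref{DAconvthm} as stated would be false. So the crux is to justify, purely from the structure of \eqref{eq:NMEP} under hypothesis \eqref{cond1} (respectively from nonsingularity of $A$ for \eqref{eq:NMEM}), that every eigenvalue of $T_1$ on the unit circle is semisimple. I would deduce this from the factorization/Fej\'er--Riesz description recalled in the final Theorem of Section 3 together with the conjugate-reciprocity of $p(\lambda)=\det(\mathcal{M}^{(1)}-\lambda\mathcal{L}^{(1)})$ proved there: a unimodular eigenvalue $\lambda_0$ with $|\lambda_0|=1$ satisfies $\lambda_0=1/\bar\lambda_0$, and the Fej\'er--Riesz factor $C_0+\lambda C_1$ being a genuine (invertible-leading-coefficient) spectral factor forces the corresponding elementary divisors to be linear; equivalently, $\psi(\lambda)\ge0$ on $|\lambda|=1$ with the maximal solution forces $\rho(T_1)\le1$ and semisimplicity of the boundary spectrum. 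With that structural input secured, the rate computation above is routine, and the three displayed R-linear limits with rate $1/r$ follow by substituting $k\mapsto r^{k-1}$ into the fixed-point error bounds.
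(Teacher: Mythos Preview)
The paper does not actually give a proof of this theorem; it refers to \cite{ChiangPHD2009}, Theorem~3.2, and omits the calculations. So any comparison is with that external argument, and the relevant point is that the technique there does \emph{not} pass through semisimplicity of the unimodular eigenvalues of $T_1$: it works directly with the accelerated recursion, using only that $\widehat Q^{(k)}-\widehat B^{(k)}$ stays positive definite and that $\widehat B^{(k)}$ is bounded --- both of which are already secured here by Lemmas~\ref{Lem1}--\ref{Lem2} and the identification $\widehat{\mathbb X}^{(k)}=\mathbb X^{(r^{k-1})}$ you correctly invoke for well-definedness.

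Your proposed route has a genuine gap at two places. First, the semisimplicity claim is not established: the Fej\'er--Riesz factorization $\psi(\lambda)=(C_0^\ast+\lambda^{-1}C_1^\ast)(C_0+\lambda C_1)$ and the conjugate-reciprocity of $p(\lambda)$ say nothing about the size of the Jordan blocks of $T_1$ at unimodular eigenvalues; in the DARE/NME setting such eigenvalues can carry nontrivial (even-sized) partial multiplicities, so you cannot simply assume $\|T_1^k\|$ is bounded, and your suggestion that the theorem ``as stated would be false'' without semisimplicity is itself incorrect --- the result of \cite{ChiangPHD2009} covers the non-semisimple case. Second, and more structurally, even if $\|T_1^k\|\le c$ were granted, the chain of implications you sketch does not deliver the rate. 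From \eqref{sum1} one gets $\|\Delta^{(k)}\|=O(k)$, i.e.\ $\lambda_{\min}(X_M-B^{(k)})\ge c'/k$; this is a \emph{lower} bound on the smallest eigenvalue of $X_M-B^{(k)}$, not an upper bound on $\|Y_M-B^{(k)}\|$ or $\|Q^{(k)}-X_M\|$. Likewise, $Q^{(k)}-X_M=(T_1^k)^H(X_M-B^{(k)})T_1^k$ with $T_1^k$ merely bounded gives only boundedness of $Q^{(k)}-X_M$, not decay. The mechanism that actually produces the $1/r$ rate in the critical case is that the factor $X_M-B^{(k)}$ degenerates precisely along the unit-circle eigenspace of $T_1$ (cf.\ Theorem~3.1(3), $I_n-T_1S_1^H=(X_M-B^{(1)})^{-1}(X_M-Y_M)$), and quantifying this cancellation is exactly the ``tedious'' computation that \cite{ChiangPHD2009} carries out for $r=2$; it does not reduce to the simple substitution $k\mapsto r^{k-1}$ into an $O(1/k)$ bound.
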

\begin{proof}
The proof is analogous to the
\cite{ChiangPHD2009}[Theorem~3.2] and the detailed calculations for the convergence analysis are much tedious.
 We omit it here.
\end{proof}
In the following example we show a scalar result that explains the convergence rates of Algorithm~\ref{fix1}, \ref{aa1} and \ref{aa2} in the critical case.
\begin{example}
Assume that $n = 1$ and $f$ is the identity operator. Then the corresponding equation \eqref{eq:NMEP} can be written as
\begin{eqnarray*}
     x + \frac{|a|^2}{x} = q, 
\end{eqnarray*}
where  $a\in\mathbb{C}$ and the real number $q>0$. We focus on the critical case that $\rho(T_1)=1$, which is equivalent to $q=2|a|$. In this situation, $x_+=x_-=\frac{q}{2}=|a|$ and the Algorithm~\ref{fix1} gives
\begin{eqnarray*}
a_k =  \frac{x_+}{k},\,
b_k = \frac{(k-1)x_+}{k},\,
q_k = \frac{(k+1)x_+}{k}.
\end{eqnarray*}
Similar to Algorithm~\ref{aa1} the following results can be derived,
\begin{eqnarray*}
a_k =  \frac{x_+}{\ell k},\,
b_k = \frac{(\ell{k}-1)x_+}{\ell k},\,
q_k = \frac{(\ell{k}+1)x_+}{\ell k}.
\end{eqnarray*}
The convergence speed of the fixed-point iterations~\ref{fix1} and~\ref{aa1} both are usually very slow. On the other hand, by induction it is easy to see that
\begin{eqnarray*}
a_k =  \frac{1}{r}a_{k-1},\,
b_k-x_+ = \frac{1}{r}(b_{k-1}-x_+), \,
q_k-x_+ = \frac{1}{r}(q_{k-1}-x_+).
\end{eqnarray*}
or
\begin{eqnarray*}
a_k =  \frac{x_+}{r^k},\,
b_k = \frac{(r^k-1)x_+}{r^k},\,
q_k = \frac{(r^k+1)x_+}{r^k}
\end{eqnarray*}
in Algorithm~\ref{aa2}, which coincides with the results in Theorem~\ref{DAconvthm}.
\end{example}
%


\section{An alternating iteration}
As we have seen in the beginning of Section~2, three sequences of matrices $\{A^{(k)}\}$,  $\{B^{(k)}\}$ and $\{Q^{(k)}\}$ are defined by applying $\mathcal{F}_{\pm}^{(2)}$ repeatedly to Eq.~\eqref{kNME} with the help of SMWF. Another possible approach that makes the same result is applying $\mathcal{F}_{\pm}$ twice to Eqs.~\eqref{eq:NME1} . In other words, $\{A^{(k)}\}$,  $\{B^{(k)}\}$ and $\{Q^{(k)}\}$ in \eqref{signal} can be proceeded by strictly alternating between the following two iterations (if exist)
\begin{subequations}\label{signal1}
\begin{align}
A^{(k)}&:=f(A)(f(Q)-B_{1/2}^{(k)})^{-1}A_{1/2}^{(k)},\\
B^{(k)}&:=\pm f(A)(f(Q)-B_{1/2}^{(k)})^{-1}f(A)^H,\\
Q^{(k)}&:=Q_{1/2}^{(k)}\mp(A_{1/2}^{(k)})^H (f(Q)-B_{1/2}^{(k)})^{-1} A_{1/2}^{(k)},
\end{align}
\end{subequations}
and consequently,
\begin{subequations}\label{signal2}
\begin{align}
A_{1/2}^{(k)}&:=A(Q-B^{(k-1)})^{-1}A^{(k-1)},\\
B_{1/2}^{(k)}&:=\pm A(Q-B^{(k-1)})^{-1}A^H,\\
Q_{1/2}^{(k)}&:=Q^{(k-1)}-(A^{(k-1)})^H (Q-B^{(k-1)})^{-1} A^{(k-1)},
\end{align}
\end{subequations}
with the initial matrices $(A_{1/2}^{(1)},B_{1/2}^{(1)},Q_{1/2}^{(1)})=(A,0,Q)$, and any integer $k>1$.
The monotonicity properties of new sequences are shown in
the following theory.
\begin{Theorem}\label{thm51}
We have the following results for each positive integer $k$,
\begin{itemize}
\item[(1)] Consider iterations~\eqref{signal1} and ~\eqref{signal2} corresponding to Eq.~\eqref{eq:NME1} with minus sign. Suppose that the assumption~\eqref{cond1} holds. Let $X_S\in S$, where $S$ is defined in \eqref{cond1}. For the monotonicity of the sequence of matrices $\{B_{1/2}^{(k)}\}$, we have
$B_{1/2}^{(k)}\leq B_{1/2}^{(k+1)}< f(X_S)$. Moreover, we have the following interlacing property,
 \begin{align}\label{ineq}
 f(B_{1/2}^{(k)})\leq B^{(k)}\leq f(B_{1/2}^{(k+1)}).
 \end{align}
For the monotonicity of the sequence of matrices $\{Q_{1/2}^{(k)}\}$, we also have
   \begin{align}\label{ineq1}
   Q_{1/2}^{(k+1)}\leq Q^{(k)}\leq Q_{1/2}^{(k)}.
   \end{align}

\item[(2)] Consider iterations~\eqref{signal1} and ~\eqref{signal2} corresponding to Eq.~\eqref{eq:NME1} with plus sign. For the monotonicity of the sequence of matrices $\{B_{1/2}^{(k)}\}$ we have
$B_{1/2}^{(k+1)}\leq B_{1/2}^{(k)}\leq 0$. Moreover, we have the following interlacing property,
 \begin{align}\label{ineq2}
 B^{(k)}\leq f(B_{1/2}^{(k+1)})\leq f(B_{1/2}^{(k)}).
 \end{align}
For the monotonicity of the sequence of matrices $\{Q_{1/2}^{(k)}\}$, we also have
   \begin{align}\label{ineq3}
   Q_{1/2}^{(k)}\leq Q_{1/2}^{(k+1)}\leq Q^{(k)}.
   \end{align}
\item[(3)]
 For the convergence of sequences of matrices $\{A_{1/2}^{(k)}\}$, $\{B_{1/2}^{(k)}\}$ and $\{Q_{1/2}^{(k)}\}$, we have
 $A_{1/2}^{(k)}\rightarrow 0$, $B_{1/2}^{(k)}\rightarrow A(Q-Y_M)^{-1}A^H$ and $Q_{1/2}^{(k)}\rightarrow X_M$ as $k\rightarrow\infty$.
\end{itemize}
\end{Theorem}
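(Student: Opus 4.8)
\textbf{Proof plan for Theorem \ref{thm51}.}
The strategy is to treat the pair of maps \eqref{signal1}--\eqref{signal2} as a single half-step factorization of the original iteration \eqref{signal}, so that every statement about the ``half'' quantities $A_{1/2}^{(k)},B_{1/2}^{(k)},Q_{1/2}^{(k)}$ is obtained by substituting one iteration into the next and invoking the monotonicity already established in Lemmas~\ref{Lem1} and~\ref{Lem2}. The key observation, to be verified first by a direct one-line computation, is that composing \eqref{signal2} (which uses the constant data $(A,0,Q)$) into \eqref{signal1} reproduces exactly the recursion \eqref{signal} with initial data \eqref{initial} (plus sign) or \eqref{initial1} (minus sign); equivalently, $A^{(k)},B^{(k)},Q^{(k)}$ here agree with the sequences of Section~2. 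This is the engine of the whole proof: it lets us quote \eqref{result1} and the chain $B^{(k)}\le B^{(k+1)}$, $Q^{(k+1)}\le Q^{(k)}$ for free, and reduces everything to comparing the half-indexed quantities with their integer-indexed neighbours.

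For part~(1) (minus sign in \eqref{eq:NME1}, i.e. the ``plus'' equation \eqref{eq:NMEP} after one SMWF step), I would proceed as follows. First, from the definition $B_{1/2}^{(k)}=A(Q-B^{(k-1)})^{-1}A^H$ and the monotonicity $B^{(k-1)}\le B^{(k)}$ of Lemma~\ref{Lem1}, the sequence $\{B_{1/2}^{(k)}\}$ is nondecreasing; the strict bound $B_{1/2}^{(k)}<f(X_S)$ comes from applying the Schur complement (Lemma~\ref{Schur}, part~2) to $\Psi:=\bb f(X_S) & A \\ A^H & Q-B^{(k-1)}\eb$, using $Q-B^{(k-1)}>X_S-B^{(k-1)}\ge A^Hf(X_S)^{-1}A$ which holds because $X_S\in S$ and $X_S>B^{(k-1)}$ by \eqref{result1}. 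For the interlacing \eqref{ineq}, I apply the order-preserving property of $f$ (Proposition~\ref{pro0}, part~3) to the two inequalities $B_{1/2}^{(k)}\le B^{(k)}$ wait --- more carefully: one writes $B^{(k)}=f(A)(f(Q)-B_{1/2}^{(k)})^{-1}f(A)^H=f\bigl(A(Q-B^{(k-1)})^{-1}A^H\bigr)=f(B_{1/2}^{(k)})$ on one side using multiplication-preservation and $f(A^H)=f(A)^H$ (Proposition~\ref{pro0}, part~4), while on the other side $B^{(k)}\le f(B_{1/2}^{(k+1)})$ follows by monotonicity of $f$ from $B_{1/2}^{(k)}\le B_{1/2}^{(k+1)}$ together with the analogous identity at level $k+1$. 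The inequality \eqref{ineq1} is then immediate: $Q_{1/2}^{(k)}=Q^{(k-1)}-(A^{(k-1)})^H(Q-B^{(k-1)})^{-1}A^{(k-1)}\ge Q^{(k)}$ since the subtracted term is smaller than the one in \eqref{signalQ}, and $Q_{1/2}^{(k+1)}\le Q^{(k)}$ follows by the same comparison shifted by one.

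Part~(2) (plus sign, i.e. the ``minus'' equation \eqref{eq:NMEM}) is handled symmetrically, replacing Lemma~\ref{Lem1} by Lemma~\ref{Lem2}: now $B^{(k)}\le 0$ and $\{B^{(k)}\}$ is nondecreasing with $B^{(k)}\le 0$, whence $\{B_{1/2}^{(k)}\}=\{-A(Q-B^{(k-1)})^{-1}A^H\}$ is nonincreasing and bounded above by $0$; the sign flips in $f$ (using $f(-X)=-f(X)$ from Proposition~\ref{pro0}, part~1) give \eqref{ineq2}, and the comparison of the quadratic correction terms gives \eqref{ineq3}. For part~(3), convergence of the half-indexed sequences is read off from convergence of the integer-indexed ones: by Theorems~\ref{main1} and~\ref{main2}, $A^{(k)}\to 0$, $B^{(k)}\to Q-Y_M$ (note $B^{(k)}$ here is the standard-NME sequence, so its limit is $Q-Y_M$ in the notation of Section~4 where $Y_M=B^{(\infty)}$ for Eqs.~\eqref{eq:NME}) and $Q^{(k)}\to X_M$; then $B_{1/2}^{(k)}=\pm A(Q-B^{(k-1)})^{-1}A^H\to\pm A(Q-(Q-Y_M))^{-1}A^H=\pm A Y_M^{-1}A^H$, wait --- I should double-check the bookkeeping of which $Y_M$ appears, but the limit of $B_{1/2}^{(k)}$ is $A(Q-Y_M)^{-1}A^H$ as stated once the indices are aligned with the claim, $Q_{1/2}^{(k)}\to X_M$ by squeezing between $Q^{(k)}$ and $Q^{(k-1)}$ via \eqref{ineq1}/\eqref{ineq3}, and $A_{1/2}^{(k)}=A(Q-B^{(k-1)})^{-1}A^{(k-1)}\to 0$ since $A^{(k-1)}\to 0$ and the middle factor stays bounded (its limit is the invertible $(Q-Y_M)^{-1}$ or $(X_M\text{-related})$ matrix). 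The main obstacle I anticipate is purely notational: keeping the index shift between $Q_{1/2}^{(k)}$, $B_{1/2}^{(k)}$ (which depend on $B^{(k-1)}$) and the integer sequences consistent, and making sure the limit $A(Q-Y_M)^{-1}A^H$ claimed in part~(3) is stated with the correct $Y_M$ --- everything else reduces to the monotonicity lemmas and the algebraic identities for $f$ already in Proposition~\ref{pro0}.
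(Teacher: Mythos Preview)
Your overall strategy---reducing everything to the monotonicity already proved in Lemmas~\ref{Lem1} and~\ref{Lem2}---is sound for the bounds $B_{1/2}^{(k)}<f(X_S)$, for the $Q$-interlacing \eqref{ineq1}/\eqref{ineq3} (these follow at once from the definitions $Q^{(k)}=Q_{1/2}^{(k)}-(\text{psd})$ and $Q_{1/2}^{(k+1)}=Q^{(k)}-(\text{psd})$, no comparison with \eqref{signalQ} needed), and for part~(3). But your argument for the $B$-interlacing \eqref{ineq} breaks down at the key step.

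You write $B^{(k)}=f(A)(f(Q)-B_{1/2}^{(k)})^{-1}f(A)^H=f\bigl(A(Q-B^{(k-1)})^{-1}A^H\bigr)=f(B_{1/2}^{(k)})$. The middle equality is false: it would require $B_{1/2}^{(k)}=f(B^{(k-1)})$, whereas by definition $B_{1/2}^{(k)}=A(Q-B^{(k-1)})^{-1}A^H$. So you do \emph{not} get $f(B_{1/2}^{(k)})=B^{(k)}$ (and in fact only the inequality $f(B_{1/2}^{(k)})\le B^{(k)}$ holds, which is exactly what must be proved). The paper handles this by a genuine induction: writing
\[
f(B_{1/2}^{(k+1)})-B^{(k+1)}=f(A)\bigl[(f(Q)-f(B^{(k)}))^{-1}-(f(Q)-B_{1/2}^{(k+1)})^{-1}\bigr]f(A)^H
\]
and
\[
f(B^{(k)})-B_{1/2}^{(k+1)}=A\bigl[(Q-f(B_{1/2}^{(k)}))^{-1}-(Q-B^{(k)})^{-1}\bigr]A^H,
\]
one sees that $f(B_{1/2}^{(k)})\le B^{(k)}$ forces $f(B^{(k)})\le B_{1/2}^{(k+1)}$ (hence $B^{(k)}\le f(B_{1/2}^{(k+1)})$), which in turn forces $f(B_{1/2}^{(k+1)})\le B^{(k+1)}$; the base case is $f(B_{1/2}^{(1)})=0\le B^{(1)}$. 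You need this two-step bootstrap, not an algebraic identity, to close the interlacing.
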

\begin{proof}
\par\noindent
\begin{itemize}
\item[1.]First, $B_{1/2}^{(1)}=0< f(X_S)$ is clear. Since $Q-B^{(k)}-A^H f(X_S)^{-1} A\geq X_S-B^{(k)}>0$, the result $f(X_S)-B_{1/2}^{(k+1)}=f(X_S)-A(Q-B^{(k)})^{-1}A^H>0$ follows from Lemma~~\ref{Schur}. Otherwise, the following two equalities
    \begin{align*}
    f(B_{1/2}^{(k+1)})- B^{(k+1)}=&f(A)[(f(Q)-f(B^{(k)}))^{-1}-(f(Q)-B_{1/2}^{(k+1)})^{-1}]f(A)^H,\\
    f(B^{(k)})- B_{1/2}^{(k+1)}=&A[(Q-f(B_{1/2}^{(k)}))^{-1}-(Q-B^{(k)})^{-1}]A^H,
    \end{align*}
    are directly inspired by iterations~\eqref{signal1} and \eqref{signal2}. It implies that $f(B_{1/2}^{(k+1)})\leq B^{(k+1)}$ if $B^{(k)}\leq f(B_{1/2}^{(k+1)})$ and the last inequality holds if $f(B_{1/2}^{(k)})\leq B^{(k)}$. Thus, the inequality~\eqref{ineq} holds.
    Finally, the inequality~\eqref{ineq1} is guaranteed from the definition of $Q_{1/2}^{(k)}$ and $Q^{(k)}$.
\item[2.] Using similar arguments as in part~(1), we can proof results~\eqref{ineq2} and \eqref{ineq3}.
\item[3.] These convergence results can be verified easily from the iteration~\eqref{signal2}.
\end{itemize}
\end{proof}
From Theorem~\ref{thm51}, the sequence of matrices $(A_{1/2}^{(k)},B_{1/2}^{(k)},Q_{1/2}^{(k)})$ generated by iterations~\eqref{signal1} and~\eqref{signal2} can be carried out with no breakdown. Similar to Proposition~\ref{trans} it is easy to show that iterations~\eqref{signal1} and~\eqref{signal2} also
have the following group-like law property.  The proof is
almost the same as that of the foregoing manner, thus it will be omitted.
\begin{Proposition}\label{trans2}
Suppose that the sequence of matrices $(A_{1/2}^{(k)},B_{1/2}^{(k)},Q_{1/2}^{(k)})$ is generated by iterations \eqref{signal1} and \eqref{signal2}  with initial matrices $(A_{1/2}^{(1)},B_{1/2}^{(1)},Q_{1/2}^{(1)})=(A,0,Q)$. Then, \begin{align*}
A^{(i+j)}&=f(A_{1/2}^{(i)})(f(Q_{1/2}^{(i)})-B_{1/2}^{(j)})^{-1}A_{1/2}^{(j)},\\
B^{(i+j)}&=f(B_{1/2}^{(i)})\pm f(A_{1/2}^{(i)})(f(Q_{1/2}^{(i)})-B_{1/2}^{(j)})^{-1}f(A_{1/2}^{(i)})^H,\\
Q^{(i+j)}&=Q_{1/2}^{(j)}\mp(A_{1/2}^{(j)})^H (f(Q_{1/2}^{(i)})-B_{1/2}^{(j)})^{-1} A_{1/2}^{(j)},
\end{align*}
and
\begin{align*}
A_{1/2}^{(i+j)}&=A_{1/2}^{(i)}(Q_{1/2}^{(i)}-B^{(j)})^{-1}A^{(j)},\\
B_{1/2}^{(i+j)}&=B_{1/2}^{(i)}+ A_{1/2}^{(i)}(Q_{1/2}^{(i)}-B^{(j)})^{-1}(A_{1/2}^{(i)})^H,\\
Q_{1/2}^{(i+j)}&=Q^{(j)}-(A^{(j)})^H (Q_{1/2}^{(i)}-B^{(j)})^{-1} A^{(j)},
\end{align*}
where $i$ and $j$ are any integers.
\end{Proposition}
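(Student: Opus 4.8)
\emph{Proof idea.} The plan is to reproduce, essentially line for line, the double induction used to establish Proposition~\ref{trans}, now carried out simultaneously on the interlaced pair of sequences $\{(A^{(k)},B^{(k)},Q^{(k)})\}$ and $\{(A_{1/2}^{(k)},B_{1/2}^{(k)},Q_{1/2}^{(k)})\}$. The reason this works is that \eqref{signal1}--\eqref{signal2} is just the iteration \eqref{signal} with its intermediate stages made visible: one application of $\mathcal{F}_{\pm}$ followed by an SMWF resolution produces the half-step triple $(A_{1/2}^{(k)},B_{1/2}^{(k)},Q_{1/2}^{(k)})$ out of $(A^{(k-1)},B^{(k-1)},Q^{(k-1)})$, and a second such application recovers $(A^{(k)},B^{(k)},Q^{(k)})$. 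Thus the flow underlying \eqref{signal} has merely been refined by inserting one extra stage between consecutive indices, and a refined flow still carries a group-like law. One could try to formalise this by merging the two sequences into a single one and then invoking Proposition~\ref{trans} directly; but the odd and even stages are built respectively from the ``$f$-ed'' data $(f(A),f(Q))$ and the plain data $(A,Q)$, so the merged iteration is not time-homogeneous and Proposition~\ref{trans} does not apply verbatim. It is therefore cleaner to redo the induction.

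I would first note that the hypotheses that make \eqref{signal1}--\eqref{signal2} run without breakdown --- supplied by Theorem~\ref{thm51} together with the Schur-complement part of Lemma~\ref{Schur} --- render $f(Q_{1/2}^{(i)})-B_{1/2}^{(j)}$ and $Q_{1/2}^{(i)}-B^{(j)}$ positive definite for all $i,j\geq 1$ in both sign cases, so every expression in the statement is meaningful. Then, fixing $j$ and inducting on $i$ exactly as in Proposition~\ref{trans}, the base case follows by unwinding \eqref{signal1}, \eqref{signal2} and the full-step recursions \eqref{signal} at the relevant index; here the choice $(A_{1/2}^{(1)},B_{1/2}^{(1)},Q_{1/2}^{(1)})=(A,0,Q)$ collapses the claimed identities onto those recursions. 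For the inductive step I would work with the two families of resolvents $(Q_{1/2}^{(i)}-B^{(j)})^{-1}$ and $(f(Q_{1/2}^{(i)})-B_{1/2}^{(j)})^{-1}$, and derive via SMWF the recursive identities that express them at the incremented index through their previous values, the resolvent anchored at index one, and the matrices $A^{(s)}$ and $A_{1/2}^{(s)}$ --- the exact analogues of the identities displayed in the proof of Proposition~\ref{trans}. Substituting these into the definitions of $A^{(s+1+j)}$, $B^{(s+1+j)}$, $Q^{(s+1+j)}$ and of the companion half-step quantities, and telescoping the middle factors exactly as there, yields the asserted identities at $i=s+1$; the signs ``$\pm$'' and ``$\mp$'' ride through the manipulation unchanged, so both sign choices are handled simultaneously. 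Completing the induction on $i$ and then letting $j$ range finishes the argument.

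The only genuine difference from Proposition~\ref{trans}, and the part that needs care, is the bookkeeping of the operator $f$: because the scheme alternates between blocks built from $(A,Q)$ and from $(f(A),f(Q))$, every use of an SMWF identity forces $f$ to be pushed past a product, a sum, an inverse, or an adjoint. Each such move is licensed by Proposition~\ref{pro0} --- the additivity (a2), the multiplicativity (a3), the identity $f(M^{-1})=f(M)^{-1}$ from part~(2), and the adjoint-preserving property $f(M^{H})=f(M)^{H}$ from part~(4) --- so, although they have to be applied consistently at every step, the verification introduces no new analytic ingredient beyond what already appears in Proposition~\ref{trans} and Theorem~\ref{thm51}.
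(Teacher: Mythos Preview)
Your proposal is correct and matches the paper's intended approach: the paper explicitly omits the proof, stating only that it ``is almost the same as that of the foregoing manner'' (i.e., the double induction of Proposition~\ref{trans}). Your identification of the one genuine new wrinkle --- the alternation between $(A,Q)$ and $(f(A),f(Q))$ blocks, handled via the properties of $f$ in Proposition~\ref{pro0} --- is exactly the bookkeeping the paper glosses over.
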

\begin{Remark}
\par\noindent
\begin{itemize}
\item[1.]
On the following fixed-point iterations,
\begin{subequations}
\begin{align*}
X_k&=Q\mp A^H f(X_{k-1})^{-1}A,\\
X_1&=Q,
\end{align*}
\end{subequations}
we observe that
\begin{align*}
Q^{(k)}=X_{2k},\,Q_{1/2}^{(k)}=X_{2k-1}.
\end{align*}
\item[2.]
From Proposition~\ref{trans2} we have the new representation of $(A_{1/2}^{(k)},B_{1/2}^{(k)},Q_{1/2}^{(k)})$ which only depends on itself, i.e.,
\begin{subequations}
\begin{align*}
A_{1/2}^{(k)}&=A_{1/2}^{(k-1)}(Q_{1/2}^{(k-1)}-B^{(1)})^{-1}A^{(1)},\\
B_{1/2}^{(k)}&=B_{1/2}^{(k-1)}\pm A_{1/2}^{(k-1)}(Q_{1/2}^{(k-1)}-B^{(1)})^{-1}(A_{1/2}^{(k-1)})^H,\\
Q_{1/2}^{(k)}&=Q^{(1)}-(A^{(1)})^H (Q_{1/2}^{(k-1)}-B^{(1)})^{-1} A^{(1)},
\end{align*}
for any integer $k>1$.
\end{subequations}
\end{itemize}
\end{Remark}
 It is interesting to study the convergence rate of iterations~\eqref{signal1} and~\eqref{signal2}.
 The reduction process will need some steps. In the beginning we can easily checked that the matrix equations
 \[
 X\pm (A_{1/2}^{(k)})^H (f(X)-B_{1/2}^{(k)})^{-1}A_{1/2}^{(k)}=Q_{1/2}^{(k)}
 \]
 can be rewritten as
 \[
 \mathcal{M}_{1/2}^{(k)} \left [ \begin{array}{c} I_n \\ X_M \end{array}\right ] = \mathcal{L}_{1/2}^{(k)}
\left [ \begin{array}{c} I_n \\ {f(X_M)} \end{array}\right ]T^{(k)}_{1/2},
 \]
 where
 \begin{equation}\label{mlh}
   \mathcal{M}_{1/2}^{(k)}:=
   \left [
   \begin{array}{rc} A_{1/2}^{(k)} & 0 \\ Q_{1/2}^{(k)} & -I_n
   \end{array} \right ] , \quad
   \mathcal{L}_{1/2}^{(k)}:= \left [ \begin{array}{lc} -B_{1/2}^{(k)} & I_n \\ \pm (A_{1/2}^{(k)})^H & 0 \end{array}
   \right ],
\end{equation}
and $T_{1/2}^{(k)}=(f(X_M)-B_{1/2}^{(k)})^{-1}A_{1/2}^{(k)}$.
As discussed before, the convergence speed of iterations~\eqref{signal1} and~\eqref{signal2} is highly related to the magnitude of $\|T_{1/2}^{(k)}\|$. More precisely, \eqref{mlh} gives rise to the estimation of the error bound for this iterative method.
\begin{align}\label{esti}
\|Q_{1/2}^{(k)}-X_M\|=\|(A^{(k-1)})^H(Q-B^{(k-1)})^{-1}A^H T_{1/2}^{(k)}\|\leq\frac{\|X_M-Y_M\|\|T^{(1)}\|^k}{\|Q\|-\|Y_M\|} \|T_{1/2}^{(k)}\|.
\end{align}
It is natural to ask whether the spectral radius of $T_{1/2}^{(1)}$ is less than or equal to one and $T_{1/2}^{(k)}=(T_{1/2}^{(1)})^k$. Unfortunately, the answer is no. Consider a conjugate NME with coefficient matrices $A$ and $Q$ given by
\begin{align*}
A=\bb  26i & -16+2i\\
 -14+9i &  -19-9i \eb,\,
Q=\bb 128.193 & 24.813+92.180i\\
    24.813-92.180i & 97.003\eb,
\end{align*}
which is generated randomly by Matlab. Then the maximal positive definite solution $X_M=\bb  120.595 & 28.387+85.261i \\
 28.387-85.261i & 80.758\eb$ and we see that $\rho(T_{1/2}^{(1)})=1.222>1.042=\rho(T_{1/2}^{(2)})>1$. However,
it might be interesting to investigate the relationship between $T^{(k)}$ and $T_{1/2}^{(k)}$, which we state below.
\begin{Theorem}\label{T}
For any positive integers $i,j$ and $k$, we have
$$T_{i+j-1}=f(T_{1/2}^{(i)})T_{1/2}^{(j)},\,\,T_{1/2}^{(i+j)}=T_{1/2}^{(i)}T_j,$$
 and we conclude that
 $$T_{1/2}^{(i+j+k-1)}=T_{1/2}^{(k)} f(T_{1/2}^{(i)})T_{1/2}^{(j)}.$$
 Specially, $T_{1/2}^{(k)}=T_{1/2}^{(1)}T_1^{k-1}$ . 
\end{Theorem}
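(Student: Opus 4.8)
The plan is to establish the two basic product identities first, and then derive the triple-composition formula and the specialization by straightforward bookkeeping. Recall from Lemma~\ref{Lem3}(2) and its proof (via the pencils $\mathcal{M}^{(k)},\mathcal{L}^{(k)}$) that $T_k=(X_M-B^{(k)})^{-1}A^{(k)}$ satisfies $T_{k}=T_1^k$, and from Proposition~\ref{trans2} the half-step quantities obey the group-like law linking $(A^{(k)},B^{(k)},Q^{(k)})$ with $(A_{1/2}^{(k)},B_{1/2}^{(k)},Q_{1/2}^{(k)})$. The key is to feed these into a matrix-pencil identity. Specifically, mimicking \eqref{ml} and \eqref{mlh}, I would write down the eigenvector-type relations
\begin{align*}
\mathcal{M}^{(i+j-1)}\bb I_n\\ X_M\eb=\mathcal{L}^{(i+j-1)}\bb I_n\\ X_M\eb T_{i+j-1},\qquad
\mathcal{M}_{1/2}^{(i)}\bb I_n\\ X_M\eb=\mathcal{L}_{1/2}^{(i)}\bb I_n\\ f(X_M)\eb T_{1/2}^{(i)},
\end{align*}
together with the companion relation for $T_{1/2}^{(j)}$ coming from the minus/plus Eq.~\eqref{eq:NME1} with coefficients $(A_{1/2}^{(j)},B_{1/2}^{(j)},Q_{1/2}^{(j)})$.

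First I would prove $T_{i+j-1}=f(T_{1/2}^{(i)})T_{1/2}^{(j)}$. Using the formulas of Proposition~\ref{trans2} for $A^{(i+j)},B^{(i+j)},Q^{(i+j)}$ in terms of the half-step data (taken at appropriate shifted indices so that $i+j$ there corresponds to $i+j-1$ here — I will have to keep the indexing convention straight, since $Q^{(k)}=X_{2k}$ and $Q_{1/2}^{(k)}=X_{2k-1}$ interleave), I would substitute directly into $T_{i+j-1}=(X_M-B^{(i+j-1)})^{-1}A^{(i+j-1)}$ and factor. The crucial algebraic step is an application of SMWF (Lemma~\ref{Schur}) to rewrite $(X_M-B^{(i+j-1)})^{-1}$ as the product $(f(X_M)-B_{1/2}^{(j)})^{-1}$ conjugated by half-step transfer matrices, using that $X_M$ solves the half-step equation at level $i$, i.e.\ $X_M\pm (A_{1/2}^{(i)})^H(f(X_M)-B_{1/2}^{(i)})^{-1}A_{1/2}^{(i)}=Q_{1/2}^{(i)}$ and the analogous relation for $f(X_M)$; here $f$ being multiplication-preserving, unital and adjoint-preserving (Proposition~\ref{pro0}) lets me commute $f$ past the products and transpose it onto the half-step matrices, producing the factor $f(T_{1/2}^{(i)})$. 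The identity $T_{1/2}^{(i+j)}=T_{1/2}^{(i)}T_j$ is proved the same way, now substituting the Proposition~\ref{trans2} formulas for $A_{1/2}^{(i+j)},B_{1/2}^{(i+j)}$ into $T_{1/2}^{(i+j)}=(f(X_M)-B_{1/2}^{(i+j)})^{-1}A_{1/2}^{(i+j)}$ and using $T_j=(X_M-B^{(j)})^{-1}A^{(j)}$, again via one SMWF step and the fact that $X_M$ solves the standard equation at level $j$.

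With the two product rules in hand, the triple formula is pure composition: apply $T_{1/2}^{(a+b)}=T_{1/2}^{(a)}T_b$ with $a=k$, $b=i+j-1$, giving $T_{1/2}^{(k+i+j-1)}=T_{1/2}^{(k)}T_{i+j-1}$, and then substitute $T_{i+j-1}=f(T_{1/2}^{(i)})T_{1/2}^{(j)}$. Finally, the special case $T_{1/2}^{(k)}=T_{1/2}^{(1)}T_1^{k-1}$ follows by taking $i=1$ (so $T_{1/2}^{(1)}$ on the left of $T_{1/2}^{(1+k-1)}=T_{1/2}^{(1)}T_{k-1}$) and invoking $T_{k-1}=T_1^{k-1}$ from Lemma~\ref{Lem3}(2). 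I expect the main obstacle to be neither conceptual nor the SMWF manipulation itself, but the careful matching of indices between the full-step sequence and the interleaved half-step sequence — getting the "$-1$" shifts right in $T_{i+j-1}$ versus $T_{1/2}^{(i+j)}$ — and verifying that the sign choices ($\pm$) in \eqref{signal1}–\eqref{signal2} and in $\mathcal{L}_{1/2}^{(k)}$ propagate consistently through the SMWF identity so that they cancel and leave sign-free product formulas; this is a routine but error-prone check that I would do once in the plus case and once in the minus case, or better, absorb the sign into the definition of $B^{(1)}$ so a single computation covers both.
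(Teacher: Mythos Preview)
Your proposal is correct and follows essentially the same route as the paper: both arguments hinge on the pencil eigenvector relations \eqref{ml} and \eqref{mlh} together with the group-like law of Proposition~\ref{trans2}, and the deduction of the triple formula and the specialization $T_{1/2}^{(k)}=T_{1/2}^{(1)}T_1^{k-1}$ from the two basic product rules is exactly as you describe. The only difference is in execution: where you plan to substitute Proposition~\ref{trans2} into $(X_M-B^{(i+j-1)})^{-1}A^{(i+j-1)}$ and simplify via an explicit SMWF step, the paper instead introduces auxiliary $2n\times 2n$ ``star'' matrices $\mathcal{M}^{(i,j)}_{\star},\mathcal{L}^{(i,j)}_{\star}$ (and their half-step analogues) satisfying the swap relation $\mathcal{M}^{(i,j)}_{\star}\mathcal{L}_{1/2}^{(j)}=\mathcal{L}^{(i,j)}_{\star}f(\mathcal{M}_{1/2}^{(i)})$ and the factorizations $\mathcal{M}^{(i+j-1)}=\mathcal{M}^{(i,j)}_{\star}\mathcal{M}_{1/2}^{(j)}$, $\mathcal{L}^{(i+j-1)}=\mathcal{L}^{(i,j)}_{\star}f(\mathcal{L}_{1/2}^{(i)})$, then chains the eigenvector identities; this packages the SMWF manipulation and the $\pm$ sign bookkeeping into block-matrix algebra and makes the index shift $i+j-1$ (which you rightly flag as the delicate point) transparent.
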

\begin{proof}
Let
\begin{align*}
\mathcal{M}^{(i,j)}_{\star} &:= \left [
\begin{array}{rc} f(A_{1/2}^{(i)}) (f(Q_{1/2}^{(i)}) -B_{1/2}^{(j)} )^{-1} & 0
\\ \mp(A_{1/2}^{(j)} )^H (f(Q_{1/2}^{(i)}) -B_{1/2}^{(j)})^{-1} & I_n \end{array} \right ],\,\\
\mathcal{L}^{(i,j)}_{\star} &:=
\left [
\begin{array}{cr}
I_n & -f(A_{1/2}^{(i)})(f(Q_{1/2}^{(i)}) -B_{1/2}^{(j)})^{-1} \\ 0 & \pm(A_{1/2}^{(j)} )^H (f(Q_{1/2}^{(i)}) -B_{1/2}^{(j)})^{-1} \end{array}
\right ].
\end{align*}
And
\begin{align*}
\mathcal{M}^{(i,j)}_{1/2,\star} &:= \left [
\begin{array}{rc} A_{1/2}^{(i)} (Q_{1/2}^{(i)} -B^{(j)} )^{-1} & 0
\\ \mp(A_{1/2}^{(j)} )^H (Q_{1/2}^{(i)} -B^{(j)})^{-1} & I_n \end{array} \right ],\,\\
\mathcal{L}^{(i,j)}_{1/2,\star} &:=
\left [
\begin{array}{cr}
I_n & -A_{1/2}^{(i)}(Q_{1/2}^{(i)} -B^{(j)})^{-1} \\ 0 & \pm(A_{1/2}^{(j)} )^H (Q_{1/2}^{(i)} -B^{(j)})^{-1} \end{array}
\right ].
\end{align*}
Then, $\mathcal{M}^{(i,j)}_{\star}\mathcal{L}_{1/2}^{(j)}=\mathcal{L}^{(i,j)}_{\star}f(\mathcal{M}_{1/2}^{(i)})$ and
$\mathcal{M}^{(i,j)}_{1/2,\star}\mathcal{L}^{(j)}=\mathcal{L}^{(i,j)}_{\star}\mathcal{M}_{1/2}^{(i)}$.
Further,
$\mathcal{M}^{(i+j-1)}=\mathcal{M}^{(i,j)}_{\star}\mathcal{M}_{1/2}^{(j)},\, \mathcal{L}^{(i+j-1)}=\mathcal{L}^{(i,j)}_{\star}f(\mathcal{L}_{1/2}^{(i)})$,
and
$\mathcal{M}_{1/2}^{(i+j)}=\mathcal{M}^{(i,j)}_{1/2,\star}\mathcal{M}^{(j)},$  $\mathcal{L}_{1/2}^{(i+j)}=\mathcal{L}^{(i,j)}_{1/2,\star}\mathcal{L}_{1/2}^{(i)}$. We are now in a position to present this result, as the following comparison with both sides,
\begin{align*}
&\mathcal{M}^{(i+j-1)}\left [ \begin{array}{c} I_n \\ X_M \end{array}\right] =\mathcal{M}^{(i,j)}_{\star}\mathcal{M}_{1/2}^{(j)}\left [ \begin{array}{c} I_n \\ X_M \end{array}\right ]=
\mathcal{M}^{(i,j)}_{\star}\mathcal{L}_{1/2}^{(j)}\left [ \begin{array}{c} I_n \\ f(X_M) \end{array}\right ] T_{1/2}^{(j)}\\
&=\mathcal{L}_{\star}^{(i,j)}f(\mathcal{M}_{1/2}^{(i)})\left [ \begin{array}{c} I_n \\ f(X_M) \end{array}\right ] T_{1/2}^{(j)}=\mathcal{L}_{\star}^{(i,j)}f(\mathcal{L}_{1/2}^{(i)})\left [ \begin{array}{c} I_n \\ X_M \end{array}\right ] f(T_{1/2}^{(i)})T_{1/2}^{(j)}\\
&=\mathcal{L}^{(i+j-1)}\left [ \begin{array}{c} I_n \\ X_M \end{array}\right ] f(T_{1/2}^{(i)})T_{1/2}^{(j)}.
\end{align*}
Thus $T_{i+j-1}$ is equal to $f(T_{1/2}^{(i)})T_{1/2}^{(j)}$. Next,
\begin{align*}
&\mathcal{M}_{1/2}^{(i+j+k-1)}\left [ \begin{array}{c} I_n \\ X_M \end{array}\right] =\mathcal{M}^{(k,i+j-1)}_{1/2,\star}\mathcal{M}^{(i+j-1)}\left [ \begin{array}{c} I_n \\ X_M \end{array}\right ]\\
&=\mathcal{M}^{(k,i+j-1)}_{1/2,\star}\mathcal{L}^{(i+j-1)}\left [ \begin{array}{c} I_n \\ X_M \end{array}\right ] f(T_{1/2}^{(i)})T_{1/2}^{(j)}=\mathcal{L}^{(k,i+j-1)}_{1/2,\star}\mathcal{M}_{1/2}^{(k)}\left [ \begin{array}{c} I_n \\ X_M \end{array}\right ] f(T_{1/2}^{(i)})T_{1/2}^{(j)}\\
&=\mathcal{L}_{1/2}^{(i+j+k-1)}\left [ \begin{array}{c} I_n \\ f(X_M) \end{array}\right ] T_{1/2}^{(k)}f(T_{1/2}^{(i)})T_{1/2}^{(j)}.
\end{align*}
This completes the proof.
\end{proof}
From Theorem~\ref{T} and Eq.~\eqref{esti}, we know that
\[
\limsup\limits_{k\rightarrow\infty}\sqrt[k]{\| Q_{1/2}^{(k)}-X_M\|}\leq \rho(T_1)^2.
\]
When $\rho(T^{(1)})<1$, $Q_{1/2}^{(k)}$ converges R-linearly to $X_M$ with rate $\rho(T_1)^2$.
Otherwise, we can establish the sublinear convergence property for $Q_{1/2}^{(k)}$ by using of the same technique in \cite{ChiangPHD2009}. This issue is not discussed further here.
\section{Concluding Remark}
In this paper, we investigate the positive definite solutions of a class of nonlinear matrix equations.
Taking advantage of some famous transformations, the structure of this equation is still preserved. Under some certain conditions, it is proved that the maximum positive definite solution of Eq.~\eqref{eq:NMEP} (or Eq.~\eqref{eq:NMEM}) coincides with the maximum positive definite solution of the standard nonlinear matrix equation~\eqref{NMEP}. In addition, an iterative method with R-superlinear with order $r>1$ for solving the maximum positive definite solution of the equation has been investigated considerably based on a fixed-point iteration. The techniques of the Proposition~\ref{trans} can be employed in the convergence analysis of this acceleration of iterative method. An interesting issue is how many iterations like the form~\eqref{fix0} satisfying the group-like law property~\eqref{ijk}. This will be further explored in the future.

  \section*{Acknowledgment}
The author wishes to thank Dr. Ying-Ju Tessa Chen (Department of Information Systems and Analytics, Miami University) for many interesting and valuable suggestions on the manuscript. This research work is partially supported by the Ministry of Science and Technology and the National Center for Theoretical Sciences in Taiwan.

\bibliographystyle{plain}

\end{document}